\let\oldmarginpar\marginpar\renewcommand\marginpar[1]{\-\oldmarginpar[\raggedleft\footnotesize #1]{\raggedright\footnotesize #1}}
\newtheorem{trm}{Theorem}[section]
\newtheorem{lem}[trm]{Lemma}
\newtheorem{prop}[trm]{Proposition}
\newtheorem{cor}[trm]{Corollary}
\theoremstyle{definition}
\newtheorem{rem}[trm]{Remark}
\newcommand{\Rho}{\mathrm{P}\!}
\newcommand{\x}{\mathbf{x}}
\newcommand{\kk}{\mathbf{k}}
\newcommand{\bfxi}{\boldsymbol{\xi}}
\newcommand{\argmin}{\operatorname{argmin}}
\newcommand{\sw}{\mathscr S}
\newcommand{\ran}{\operatorname{ran}}
\newcommand{\la}{\langle}
\newcommand{\ra}{\rangle}
\renewcommand{\d}{\,\mathrm{d}}
\newcommand{\pd}{\partial}
\newcommand{\e}{\mathrm{e}}
\newcommand{\ii}{{\mathrm{i}}}
\renewcommand{\phi}{\varphi}
\renewcommand{\epsilon}{\varepsilon}
\renewcommand{\theta}{\vartheta}
\newcommand{\C}{\mathbb{C}}
\newcommand{\R}{\mathbb{R}}
\newcommand{\cl}{\operatorname{cl}}
\newcommand{\aq}{\Leftrightarrow}
\newcommand{\Z}{\mathbb{Z}}
\newcommand{\N}{\mathbb{N}}
\renewcommand{\L}{\mathcal L}
\newcommand{\E}{\mathcal E}
\newcommand{\F}{\mathcal F}
\newcommand{\BB}{\mathscr B}
\newcommand{\CC}{\mathscr C}
\newcommand{\X}{\mathcal X}
\renewcommand{\P}{\mathcal P}
\newcommand{\spn}{\operatorname{span}}
\newcommand{\inj}{\hookrightarrow}
\renewcommand{\Im}{\operatorname{Im}}
\renewcommand{\Re}{\operatorname{Re}}
\newcommand{\diff}[3][]{\frac{\mathrm{d}^{#1}{#2}}{\mathrm{d}{#3}^{#1}}}
\newcommand{\pdiff}[3][]{\frac{\partial^{#1}{#2}}{\partial{#3}^{#1}}}
\newcommand{\nn}{|{}\hspace{-0.5mm}{}|{}\hspace{-0.5mm}{}|}
\numberwithin{equation}{section}
\begin{document}

\title[]{Spectral Analysis and Long-Time Behaviour of a Fokker-Planck Equation with a Non-Local Perturbation}
\begin{abstract}
In this article we consider a Fokker-Planck equation on $\R^d$ with a non-local, mass preserving perturbation. We first give a spectral analysis of the unperturbed Fokker-Planck operator in an exponentially weighted $L^2$-space. In this space the perturbed Fokker-Planck operator is an isospectral deformation of the Fokker-Planck operator, i.e.~the spectrum of the Fokker-Planck operator is not changed by the perturbation. In particular, there still exists a unique (normalized) stationary solution of the perturbed evolution equation. Moreover, the perturbed Fokker-Planck operator generates a strongly continuous semigroup of bounded operators. Any solution of the perturbed equation converges towards the stationary state with exponential rate $-1$, the same rate as for the unperturbed Fokker-Planck equation. Moreover, for any $k\in\N$ there exists an invariant subspace with codimension $k$ (if $d=1$) in which the exponential decay rate of the semigroup equals $-k$.
\end{abstract}

\author[D. St\"urzer]{Dominik St\"urzer} \address{Institute for Analysis and
 Scientific Computing, Technical University Vienna, Wiedner
 Hauptstra\ss{}e 8, A-1040 Vienna}
\email{dominik.stuerzer@tuwien.ac.at}

\author[A. Arnold]{Anton Arnold} \address{Institute for Analysis and
 Scientific Computing, Technical University Vienna, Wiedner
 Hauptstra\ss{}e 8, A-1040 Vienna}
\email{anton.arnold@tuwien.ac.at}

\subjclass[2010]{35B20, 35P99, 35Q84, 47D06}
\keywords{Fokker-Planck, non-local perturbation, spectral analysis, strongly continuous semigroup, stationary solution, long-time behaviour, exponential decay of the semigroup}

\maketitle

\section{Introduction}\label{sec1}
This work deals with the analysis of the following class of perturbed Fokker-Planck equations:
\begin{subequations}\label{pert_fp}
      \begin{eqnarray}
      \pd_t f&=&\nabla\cdot(\nabla f+\x f)+\Theta f=:Lf+\Theta f\label{pert_fp:1}\\
      f|_{t=0}&=&\phi(\x),\label{pert_fp:2}
      \end{eqnarray}
\end{subequations}
where $t\ge 0,\,\x\in\R^d$ with $d\in\N$, and $f=f(t,\x)$. Here, $\pd_t f$  denotes the time derivative. The linear, non-local operator $\Theta$ is given by a convolution $\Theta f=\theta*f$ with respect to $\x$, where its kernel $\theta$ is assumed to be time-independent and with zero mean, i.e.~$\int_{\R^d}\theta(\x)\d \x=0$. Also, it is assumed to satisfy certain regularity conditions, which will be specified in the Sections \ref{sec3} and \ref{sec35}.

The above equation is mainly motivated by the quantum-kinetic Wigner-\linebreak Fokker-Planck equation, describing so-called open quantum systems, see \cite{Arnold2010,alms}. It is of the form
\begin{align*}
 \pd_t u&= \nabla_{\x,\mathbf v}\cdot(\nabla_{\x,\mathbf v} u+ (\nabla_{\x,\mathbf v} A+\mathbf{F})u)+\Xi[V]u\\
 u|_{t=0}&=u_0,
\end{align*}
where $u=u(t,\x,\mathbf v)$ is the phase-space quasi-density, with $\x,\mathbf v\in\R^d$ denoting position and momentum. The given coefficient function $\nabla_{\x,\mathbf v} A+\mathbf{F}$ is affine in $(\x,\mathbf v)$ and models the confinement and friction of the system. $\Xi[V]$ is a non-local operator (convolution in $\mathbf v$) determined by an external potential $V(\x)$. One question of interest in this problem is to show the existence of a unique normalized stationary state, and to prove uniform exponential convergence of the solution to the stationary state. In the case of a quadratic confinement potential with a small perturbation these questions have been answered positively in \cite{Arnold2010}, see also \cite{afn2008} for an operator-theoretic approach. However, from the physical point of view, the restriction to nearly quadratic potentials seems quite artificial. This raises the question if the results can be extended to a more general family of (confining) potentials. In order to gain insight 
into what can be expected and what mechanisms are responsible for the actual behaviour, we shall consider here (\ref{pert_fp}) as a similar, yet simplified model, which still preserves the essential structure. The non-local operator $\Xi[V]$, which is a convolution in $\mathbf v$, is replaced by a convolution with kernel $\theta$. This represents a first step towards the full analysis.

Other examples of non-local perturbations in Fokker-Planck equations appear e.g.~in the linearized vorticity formulation of the 2D Navier-Stokes equations (cf.~(12)-(14) in \cite{gw}) or in electronic transport models (cf.~the linearization of equations (1), (6), (7) in \cite{lk}).

\medskip
For the unperturbed equation (\ref{pert_fp}), i.e.~the case $\theta=0$, the natural functional setting is the space $L^2(\mu^{-1})$, with the weight function $\mu(\x)=\exp(-|\x|^2/2)$. Here, $\mu/(2\pi)^{d/2}$ is the unique steady state with normalized mass, i.e.~$\int_{\R^d}\mu/(2\pi)^{d/2}\d \x=1$, and all solutions to initial conditions with mass one decay towards this state with exponential rate of at least $-1$, see e.g.~\cite{bakry}. However, if $\Theta$ is added, the situation often becomes more complicated. One reason is that many non-local (convolution) operators are unbounded in the space $L^2(\mu^{-1})$. This can be illustrated for the simple example with the convolution kernel $\theta=\delta_{-\alpha}-\delta_{\alpha}, \,\alpha\in\R$, in one dimension. It corresponds to the operator $(\Theta f)(x)=f(x+\alpha)-f(x-\alpha),\,x\in\R$, which is unbounded in $L^2(\mu^{-1})$. In this case one can show (with an eigenfunction expansion) that every (non-trivial) stationary state of (\ref{pert_fp}) is {\em not} even an element of $L^2(\mu^{-1})$. Thus, this space is not suitable for our intended large-time analysis, since it is ``too small''. This motivates to consider (\ref{pert_fp}) in some larger space $L^2(\omega)$, with a weight $\omega$ growing slower than $\mu^{-1}$. Due to the previous discussion we shall choose $\omega$ such that a large class of non-local operators becomes bounded. But the new space should not be ``too large'' either, since we would risk to loose many convenient properties (like the spectral gap) of the unperturbed Fokker-Planck operator. In $L^2(\R^d)$, e.g.,~the spectrum of $L$ is the left half plane $\{\lambda\in\C:\Re\lambda\le d/2\}$, cf.~\cite{Metafune2001}.  It will turn out that $\omega(\x):=\cosh \beta |\x|,\,\beta>0$, is a convenient choice. Moreover, there is a useful characterization of the functions of $L^2(\omega)$ in terms of their Fourier transform, see Lemma \ref{analyticity}.

Here we focus on the Fokker-Planck operator in exponentially weighted spaces. For $L^2$-spaces with polynomial weights, the spectrum of $L$ was studied in \cite{Gallay2002}. Furthermore, our results complement the analysis of Metafune \cite{Metafune2001}, where a larger class of Ornstein-Uhlenbeck operators is investigated in unweighted $L^p$-spaces with $p\ge 1$.

\medskip
This paper is organized as follows. Since the analysis in the $d$-dimensional case is very similar to the one-dimensional case, we first discuss (in Sections \ref{sec2} and \ref{sec3}) the one-dimensional problem in great detail, to keep the notation and arguments more concise. In Section \ref{sec35}, we generalize the proofs to higher dimensions.

In Section \ref{sec2} we investigate the one-dimensional Fokker-Planck operator in $L^2(\omega)$ (denoted by $\L$), and show that its spectrum is $-\N_0$, and consists entirely of eigenvalues. All eigenspaces are one-dimensional, in particular the stationary state is unique up to normalization. Moreover, the operator $\L$ generates a $C_0$-semigroup of uniformly bounded operators on $L^2(\omega)$, and any solution of (\ref{pert_fp}) for $\Theta=0$ converges towards the (appropriately scaled) stationary solution with exponential rate of at least $-1$. More generally, for any $k\in\N_0$ there exists an $\L$-invariant subspace of $L^2(\omega)$ with codimension $k$ in which the associated semigroup has an exponential decay rate of $-k$. Section \ref{sec3} is dedicated to the perturbed Fokker-Planck operator $\L+\Theta$ in one dimension. Using the compactness of the resolvent of $\L$ and ladder operators we show that $\L+\Theta$ is an isospectral deformation of the unperturbed operator $\L$, i.e.~$\sigma(\L+\Theta)=\sigma(\L)=-\N_0$. The spectrum still consists only of eigenvalues with one-dimensional eigenspaces, which ensures the existence of a unique normalized steady state of (\ref{pert_fp}) in $L^2(\omega)$. On a formal level this isospectral property of $\L+\Theta$ can be understood as follows: In the eigenbasis of $\L$, $\Theta$ corresponds to a strictly lower triangular (infinite) matrix. Finally we show that the semigroup generated by $\L+\Theta$ still has the same decay properties as the one generated by $\L$. In particular the solutions of (\ref{pert_fp}) with normalized mass decay to the stationary state with exponential rate of at least $-1$. In Section \ref{sec4} we present simulation results, which illustrate the decay rates obtained before.

\section{The Fokker-Planck Operator in Weighted $L^2$-Spaces}\label{sec2}

Here and in Section \ref{sec3} we shall consider the one-dimensional Fokker-Planck equation, i.e.~$d=1$. For the Fourier transform we use the convention
\[\F_{x\to\xi}f\equiv\hat f(\xi):=\int_\R f(x)\e^{-\ii x\xi}\d x.\]
With this scaling we may identify $\hat f(0)$ with the {\em mass} of $f$.

For an analytic function $f$ on a simply connected domain $\Omega$ we denote the line integral of $f$ along a path from $a$ to $b$ inside of $\Omega$ by
\[\int_{a\to b} f(\zeta)\d\zeta.\]
In order to properly define complex powers, we specify a branch of the logarithm. For $\xi\in\C\setminus\{0\}$ we set $\ln\xi:=\log|\xi|+\ii\arg\xi$, with $\arg\xi\in[-\frac\pi2,\frac{3\pi}2)$, and $\log(\cdot)$ is the natural logarithm on $\R^+$. For $\zeta\in\C$ we may then define $\xi^{\zeta}:=\exp(\zeta\ln(\xi))$.\label{log}

On a domain $\Omega\subseteq\R$ we call a real-valued function $w\in L^1_{\text{loc}}(\Omega)$ a {\em weight function} if it is bounded from below by a positive constant a.e.~on every compact subset of $\Omega$. We denote the corresponding weighted $L^p$-space by $L^p(\Omega;w)\equiv L^p(\Omega;w(x)\d x)$, where $1\le p\le \infty$. The space $L^2(\Omega;w)$ is equipped with the inner product \[\la f,g\ra_{\Omega,w}=\int_\Omega f\bar g w\d x,\] and the norm $\|\cdot\|_{\Omega,w}$.

Also, we introduce weighted Sobolev spaces. For two weight functions $w_0$ and $w_1$ and $1\le p\le\infty$, the space $W^{1,p}(\Omega;w_0,w_1)$ consists of all functions $f\in L^p(\Omega;w_0)$, whose distributional derivative satisfies $f'\in L^p(\Omega;w_1)$. We equip the space $W^{1,2}(\Omega;w_0,w_1)$ with the norm
\[
      \|f\|_{\Omega,w_0,w_1}:=\big(\|f\|_{\Omega,w_0}^2+\|f'\|_{\Omega,w_1}^2 \big)^{\frac12},
\]
see \cite{Kufner1984}. If $\Omega=\R$ we shall omit the symbol $\Omega$ in these notations.

\medskip

Furthermore, we present some definitions and properties concerning unbounded operators and their spectrum. Let $X,\X$ be Hilbert spaces. If $X$ is continuously and densely embedded in $\X$, we write $X\inj \X$, and $X\inj\inj \X$ indicates that the embedding is compact. $\CC(X)$ denotes the set of all closed operators $A$ in $X$ with dense domain $D(A)$. The set of all bounded operators $A:X\to \X$ is $\BB(X,\X)$; if $X=\X$ we just write $\BB(X)$. A closed, linear subspace $Y\subset X$ is said to be {\em invariant} under $A\in\CC(X)$ (or {\em $A$-invariant}) iff $D(A)\cap Y$ is dense in $Y$ and $\ran A|_{Y}\subset Y$, see e.g.~\cite{Albrecht2003}. For an operator $A\in\CC(X)$ its range is $\ran A$, its null space is $\ker A$, and its algebraic null space is $M(A):=\bigcup_{k\ge 0}\ker A^k$. For any $\zeta\in\C$ lying in the resolvent set $\rho(A)$, we denote the resolvent by $R_A(\zeta):=(\zeta-A)^{-1}$. The complement of $\rho(A)$ is the spectrum $\sigma(A)$, and $\sigma_p(A)$ is the point spectrum. For an 
isolated subset $\sigma'\subset\sigma(A)$ the corresponding {\em spectral projection} $\Rho_{A, \sigma'}$ is defined via the line integral 
\begin{equation}\label{def:spec_proj}
 \Rho_{A, \sigma'}:=\frac 1{2\pi\ii}\oint_\Gamma R_A(\zeta)\d\zeta,
\end{equation}
where $\Gamma$ is a closed Jordan curve with counter-clockwise orientation, strictly separating $\sigma'$ from $\sigma(A)\setminus\sigma'$, with $\sigma'$ in the inside of $\Gamma$ and $\sigma(A)\setminus\sigma'$ on the outside. The following results can be found in \cite[Section III.6.4]{kato} and \cite[Section V.9]{taylay}: The spectral projection is a bounded projection operator, decomposing $X$ into two $A$-invariant subspaces, namely $\ran \Rho_{A,\sigma'}$ and $\ker \Rho_{A,\sigma'}$. This property is referred to as the {\em reduction of $A$ by $\Rho_{A, \sigma'}$.} A remarkable property of this decomposition is the fact that $\sigma(A|_{\ran \Rho_{A,\sigma'}})= \sigma'$ and $\sigma(A|_{\ker \Rho_{A, \sigma'}})=\sigma(A)\backslash \sigma'$. Most of the time we will be concerned with the situation where $\sigma'=\{\lambda\}$, i.e.~an isolated point of the spectrum. For further results see the Appendix \ref{app:space_enlarg}.

\smallskip
A final remark concerns constants occurring in estimates: Throughout this article, $C$ denotes some positive constant, not necessarily always the same. Dependence on certain parameters will be indicated in brackets, e.g.~$C(t)$ for dependence on $t$.
\medskip

We begin our analysis by investigating the unperturbed one-dimensional\linebreak  Fokker-Planck operator $Lf:=f''+xf'+f$ in various weighted spaces. The natural space to consider $L$ in is $E:=L^2(1/\mu)$ with $\mu(x):=\exp(-x^2/2)$. We use the notation $\|\cdot\|_E$ for the norm and $\la\cdot,\cdot\ra_E$ for the inner product. Writing the operator in the form \[Lf=\bigg(\bigg(\frac f\mu\bigg)'\mu\bigg)'\] shows that $L|_{C_0^\infty}$ is symmetric and dissipative in $E$. Then, the proper definition of $L$ is obtained by the closure of $L|_{C_0^\infty}$, and this procedure yields its domain $D(L)\subset E$. In the subsequent theorem we summarize some important properties of $L$ in $E$, see \cite{Metafune2001,bakry,helffernier}. Since $L$ in $E$ is isometrically equivalent to the (dimensionless) quantum harmonic oscillator Hamiltonian $H=-\Delta-1/2+x^2/4$ in $L^2(\R)$, we transfer many results of $H$ (see \cite{parmegg} and \cite[Theorem XIII.67]{resi}) to $L$. For the properties of the spectral projections, 
see also \cite[Section V.3.5]{kato}.

\begin{trm}\label{prop:fp_in_H}
The Fokker-Planck operator $L$ in $E$ has the following properties:
      \begin{enumerate}
      \renewcommand{\theenumi}{\roman{enumi}}
\renewcommand{\labelenumi}{(\theenumi)}
	\item\label{st_fp:1} $L$ with $D(L)=\{f\in E:f''+xf'+f\in E\}$ is self-adjoint and has a compact resolvent.
	\item\label{st_fp:2} The spectrum is $\sigma(L)=-\N_0$, and it consists only of eigenvalues.
	\item\label{st_fp:3} For each eigenvalue $-k\in\sigma(L)$ the corresponding eigenspace is one-di\-men\-sio\-nal, span\-ned by  $\mu_k:= \frac 1{\sqrt{2\pi}}H_k\mu$, where \[H_k(x)= \mu(x)^{-1}\diff[k]{}{x}\mu(x)\]
	is the $k$-th Hermite polynomial.
	\item\label{st_fp:4} The eigenvectors $(\mu_k)_{k\in\N_0}$ form an orthogonal basis of $E$.
	\item\label{st_fp:45} There holds the spectral representation
	      \[\label{spec_proj_in_E}L=\sum_{k\in\N_0}-k\Pi_{L,k},\quad\text{where}\quad \Pi_{L,k}:=\frac{\sqrt{2\pi}}{k!}\mu_k\la
	\cdot,\mu_k\ra_E\] is the spectral projection onto the $k$-th eigenspace.
	\item\label{st_fp:5} The operator $L$ generates a $C_0$-semigroup of contractions on $E_k$ for all $k\in\N_0$, where $E_k:=\ker(\Pi_{L,0}+\cdots+\Pi_{L,k-1}),\,k\ge 1$, and $E_0:=E$ are $L$-invariant subspaces of $E$. The semigroup satisfies the estimate
	      \[\big\|\e^{tL}|_{E_k}\big\|_{\BB(E_k)}\le\e^{-kt},\quad\forall k\in\N_0.\]
      \end{enumerate}
\end{trm}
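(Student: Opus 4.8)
The plan is to exploit the isometric equivalence between $L$ in $E$ and the quantum harmonic oscillator, and then to import the corresponding spectral and semigroup results. Concretely, I would introduce the map $U\colon E\to L^2(\R)$, $Uf:=\mu^{-1/2}f$, which is a unitary isomorphism since $\|Uf\|_{L^2(\R)}^2=\int_\R|f|^2\mu^{-1}\d x=\|f\|_E^2$. A direct computation (using $\mu'=-x\mu$) transforms the conjugated operator into $ULU^{-1}=-H$, with $H=-\pd_x^2-\tfrac12+\tfrac{x^2}4$. Because $H$ factorizes as $H=a^*a$ with $a=\tfrac x2+\pd_x$, $a^*=\tfrac x2-\pd_x$ satisfying $[a,a^*]=1$, it is the number operator of a harmonic oscillator: self-adjoint, with purely discrete spectrum $\N_0$ and a Hermite-function eigenbasis. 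This reduces the whole theorem to transferring known facts about $H$ across $U$.

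For parts (\ref{st_fp:1})--(\ref{st_fp:4}) I would argue as follows. Essential self-adjointness of $H$ on $C_0^\infty$ is classical, and since $U$ maps $C_0^\infty$ bijectively onto itself, the closure of $L|_{C_0^\infty}$ is self-adjoint with maximal domain $\{f\in E:f''+xf'+f\in E\}=U^{-1}D(H)$. The resolvent of $H$ is compact (the embedding of its form domain into $L^2(\R)$ is compact), so $R_L$ is compact as well, giving (\ref{st_fp:1}). The spectral mapping under $U$ yields $\sigma(L)=-\sigma(H)=-\N_0$, consisting only of eigenvalues (\ref{st_fp:2}). Each eigenspace of $H$ is one-dimensional, spanned by a Hermite function $h_k\propto H_k\mu^{1/2}$; pulling back, $U^{-1}h_k\propto H_k\mu=\sqrt{2\pi}\,\mu_k$, which gives (\ref{st_fp:3}), and orthonormality of the $h_k$ in $L^2(\R)$ translates into orthogonality of the $\mu_k$ in $E$, i.e.~(\ref{st_fp:4}).

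Part (\ref{st_fp:45}) then follows from the spectral theorem for the self-adjoint operator $L$ with purely discrete spectrum: the spectral projection onto $\ker(L+k)$ is the orthogonal projection onto $\spn\{\mu_k\}$, namely $\Pi_{L,k}=\mu_k\la\cdot,\mu_k\ra_E/\|\mu_k\|_E^2$. The stated constant results from the Hermite normalization $\|\mu_k\|_E^2=\tfrac1{2\pi}\int_\R H_k^2\mu\d x=k!/\sqrt{2\pi}$, and $L=\sum_k(-k)\Pi_{L,k}$ holds in the strong sense on $D(L)$.

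The long-time statement (\ref{st_fp:5}) is where the self-adjoint structure pays off, and I regard the sharp decay constant as the main point to get right. Since $L$ is self-adjoint and bounded above by $0$, it generates an analytic $C_0$-semigroup of contractions on $E$, given by functional calculus as $\e^{tL}=\sum_{k\in\N_0}\e^{-kt}\Pi_{L,k}$. The subspace $E_k=\ker(\Pi_{L,0}+\dots+\Pi_{L,k-1})$ is precisely the orthogonal complement in $E$ of $\spn\{\mu_0,\dots,\mu_{k-1}\}$; it reduces $L$, and the restriction $L|_{E_k}$ is self-adjoint with $\sigma(L|_{E_k})=\{-k,-k-1,\dots\}$, hence $L|_{E_k}\le -k$. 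The spectral bound for self-adjoint operators then gives $\|\e^{tL}|_{E_k}\|_{\BB(E_k)}=\e^{t\sup\sigma(L|_{E_k})}\le\e^{-kt}$, as claimed. The only genuinely technical ingredient is the first step -- verifying essential self-adjointness and the maximal-domain identification through $U$ -- after which every assertion is a routine consequence of the harmonic-oscillator functional calculus.
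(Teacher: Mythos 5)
Your proposal is correct and takes essentially the same approach as the paper: the paper states this theorem without a detailed proof, explicitly invoking the isometric equivalence of $L$ in $E$ with the harmonic oscillator Hamiltonian $H=-\Delta-\frac12+\frac{x^2}{4}$ in $L^2(\R)$ and citing standard references (Parmeggiani, Reed--Simon Thm.~XIII.67, Kato) for its spectral theory and the spectral projections. You merely fill in the details of that transfer --- the unitary $Uf=\mu^{-1/2}f$ with $ULU^{-1}=-H$, the factorization $H=a^*a$, the Hermite normalization $\|\mu_k\|_E^2=k!/\sqrt{2\pi}$, and the self-adjoint spectral bound $\|\e^{tL}|_{E_k}\|=\e^{t\sup\sigma(L|_{E_k})}$ --- and all of these check out.
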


Hence, the Fokker-Planck equation $\pd_t f=Lf$ has a unique stationary solution with normalized mass, given by $\mu_0$. Its orthogonal complement $E_1$ consists of all elements of $E$ with zero mass. And according to Result (\ref{st_fp:5}) for $k=1$, any solution of $\pd_t f=Lf$ with unit mass converges towards $\mu_0$ with exponential rate of at least $-1$ in the $E$-norm.

\medskip
In order to analyze the perturbed equation \eqref{pert_fp}, we quickly find that $E$ is not appropriate. For example, for the simple (unbounded) perturbation $\Theta f(x):=f(x+\alpha)-f(x-\alpha),\,\alpha\in\R$, we can explicitly compute the stationary solution $f_0$ of \eqref{pert_fp} and expand it with respect to the orthogonal basis $(\mu_k)_{k\in\N}$ of $E$. The obtained Fourier coefficients form a divergent sequence, and so $f_0\notin E$. Therefore we consider some larger space $L^2(\omega)$ instead of $E$, with a weight function $\omega$ growing more slowly than $\mu^{-1}$. Thereby we choose $\omega$ such that $\Theta$ becomes a bounded operator in $L^2(\omega)$ for a large family of convolution kernels.  E.g., one can easily verify that $\Theta f(x)=f(x+\alpha)-f(x-\alpha)$ is bounded in $L^2(\exp(\beta|x|^\gamma))$ iff $\gamma\in[0,1]$ (for $\beta>0$). At the same time, $\omega$ should grow fast enough such that $L$ still has a spectral gap in $L^2(\omega)$, i.e.~there exists some $a<0$ such that $\{\zeta\in\C:\Re\zeta>a\}\cap\sigma(L)=\{0\}$. These requirements suggest that exponentially growing weights would be good candidates, growing as fast as permissible while still admitting a large class of non-local operators. So, for the rest of this paper, we choose the weight function $\omega(x)=\cosh\beta x$ for some fixed $\beta>0$, and use the corresponding space $\E:=L^2(\cosh\beta x)$. As we will see in the following, the space $\E$ is very convenient also for technical purposes, since it can easily be characterized using the Fourier transform.

\begin{lem}\label{analyticity}
For $f\in\E$ we have the following properties:
\begin{enumerate}
\renewcommand{\theenumi}{\roman{enumi}}
\renewcommand{\labelenumi}{(\theenumi)}
 \item\label{analyt:i} There holds $f\in \E$ iff its Fourier transform $\hat f$ possesses an analytic continuation (still denoted by $\hat f$) to the open strip $\Omega_{\beta/2}:=\{z\in \C:|\Im z|<\beta/2\}$, which satisfies
\begin{equation}\label{seid}
 \sup_{\substack{|b|<\beta/2\\b\in\R}}\|\hat f(\cdot+\ii b)\|_{L^2(\R)}<\infty.
\end{equation}
 \item\label{analyt:ii} For $\xi\in\R$ and $|b|<\beta/2$, $\hat f$ is explicitly given by $\hat f(\xi+\ii b)=\F_{x\to\xi}(\e^{bx}f(x))$. 
 \item\label{analyt:iii} The following function lies in $L^2(\R)$:
\begin{equation}\label{hatf}
\xi\mapsto\hat f\Big(\xi\pm\ii\frac\beta 2 \Big):=\F_{x\to\xi}(\e^{\pm\frac\beta 2x}f(x)),\quad\text{for a.e.~}\xi\in\R.
\end{equation}
Moreover, $b\mapsto\hat f(\cdot+\ii b)$ lies in $C([-\beta/2,\beta/2];L^2(\R))$. In particular \eqref{hatf} is a natural continuation of $\hat f$ from $\Omega_{\beta/2}$ to the closure $\overline{\Omega_{\beta/2}}$.
\end{enumerate}
\end{lem}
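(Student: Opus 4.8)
The plan is to reduce everything to the elementary observation that $\cosh\beta x$ is comparable to $\tfrac12(\e^{\beta x}+\e^{-\beta x})$, so that $f\in\E$ holds iff both $\e^{\beta x/2}f$ and $\e^{-\beta x/2}f$ lie in $L^2(\R)$. More generally, since $\e^{2bx}\le\e^{\beta|x|}\le 2\cosh\beta x$ whenever $|b|\le\beta/2$, one has $\e^{bx}f\in L^2(\R)$ for all such $b$; and for $|b|<\beta/2$ strictly, Cauchy--Schwarz against $(\cosh\beta x)^{1/2}$ gives $\e^{bx}f\in L^1(\R)$ as well (the auxiliary integral $\int_\R\e^{2bx}/\cosh\beta x\,\d x$ being finite precisely when $|b|<\beta/2$). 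These facts make the Fourier--Laplace integral $F(\xi+\ii b):=\int_\R f(x)\e^{-\ii x(\xi+\ii b)}\d x=\F_{x\to\xi}(\e^{bx}f)(\xi)$ well defined for $|b|<\beta/2$ and as an $L^2$-Fourier transform for $|b|\le\beta/2$, which is already the content of \eqref{hatf} and of part (\ref{analyt:ii}).

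For the forward implication in (\ref{analyt:i}) I would show that this $F$ is holomorphic on $\Omega_{\beta/2}$. The quickest route is Morera's theorem: $F$ is continuous on the strip, and for any closed triangle $\gamma$ contained in a compact substrip $\{|\Im z|\le b_\ast\}$ with $b_\ast<\beta/2$, Fubini applies because $\int_\gamma\int_\R|f(x)|\e^{b_\ast|x|}\d x\,|\d z|<\infty$ by the $L^1$-bound above; interchanging the integrals and using that $z\mapsto\e^{-\ii xz}$ is entire yields $\oint_\gamma F=0$. (Equivalently one differentiates under the integral sign, the weight $|x|\e^{b|x|}$ being dominated by $\e^{b'|x|}$ for $b<b'<\beta/2$.) The bound \eqref{seid} is then immediate from Plancherel, $\|F(\cdot+\ii b)\|_{L^2(\R)}^2=2\pi\|\e^{bx}f\|_{L^2(\R)}^2\le 4\pi\|f\|_\E^2$; and since $F$ coincides with $\hat f$ on the line $b=0$, the identity theorem identifies it with the analytic continuation claimed.

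The converse implication is the heart of the matter. Assume $F$ is holomorphic on $\Omega_{\beta/2}$ with $\sup_{|b|<\beta/2}\|F(\cdot+\ii b)\|_{L^2(\R)}<\infty$, and set $f_b:=\F^{-1}[F(\cdot+\ii b)]\in L^2(\R)$. The goal is to prove that $\e^{-bx}f_b(x)$ is independent of $b$; calling this common function $f$, one has $f_b=\e^{bx}f$ and hence $\int_\R|f|^2\e^{2bx}\d x=\tfrac1{2\pi}\|F(\cdot+\ii b)\|_{L^2(\R)}^2\le C$ for all $|b|<\beta/2$, so that monotone convergence as $b\to\pm\beta/2$ gives $\int_\R|f|^2\e^{\pm\beta x}\d x\le C$ and therefore $f\in\E$. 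To get the $b$-independence I would integrate $z\mapsto F(z)\e^{\ii xz}$ around the rectangle with corners $\pm R+\ii b_1,\ \pm R+\ii b_2$: holomorphy makes the contour integral vanish, while the two horizontal sides converge, in $L^2$ and hence a.e.\ along a subsequence $R_n\to\infty$, to $2\pi\e^{-b_j x}f_{b_j}(x)$. \emph{The main obstacle, as always in Paley--Wiener type arguments, is that with only an $L^2$ bound on the slices the two vertical sides need not vanish for every $R$.} This is handled by a subsequence argument: since $\int_{b_1}^{b_2}\!\int_\R|F(\xi+\ii b)|^2\d\xi\,\d b<\infty$, Fubini forces $\int_{b_1}^{b_2}|F(\pm R+\ii b)|^2\d b\to0$ along some $R_n\to\infty$, and Cauchy--Schwarz (with $\e^{\ii xz}$ bounded over the compact $b$-range) then makes the vertical contributions vanish along $R_n$. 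Passing to the limit yields $\e^{-b_1x}f_{b_1}=\e^{-b_2x}f_{b_2}$ a.e., as required.

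Finally, part (\ref{analyt:iii}) follows once (\ref{analyt:i})--(\ref{analyt:ii}) are established. Since $f\in\E$ gives $\e^{\pm\beta x/2}f\in L^2(\R)$, the endpoint functions in \eqref{hatf} are well-defined elements of $L^2(\R)$, and for $b,b'\in[-\beta/2,\beta/2]$ Plancherel gives $\|F(\cdot+\ii b)-F(\cdot+\ii b')\|_{L^2(\R)}^2=2\pi\int_\R|f|^2(\e^{bx}-\e^{b'x})^2\d x$. Dominated convergence, with dominating function $C\cosh\beta x\,|f|^2\in L^1(\R)$, shows this tends to $0$ as $b'\to b$; hence $b\mapsto F(\cdot+\ii b)$ is (uniformly) continuous on the closed interval $[-\beta/2,\beta/2]$ with values $\F(\e^{\pm\beta x/2}f)$ at the endpoints, so \eqref{hatf} is indeed the continuous boundary extension of the analytic continuation.
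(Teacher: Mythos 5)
Your proof is correct, but it takes a genuinely more self-contained route than the paper's. The paper obtains the analyticity of $\hat f$ on $\Omega_{\beta/2}$, the identity in (ii), and the converse implication in (i) all by invoking a ready-made Paley--Wiener theorem (Theorem IX.13 of Reed--Simon, vol.~II), and then only supplies the short Plancherel computations, plus, for (iii), a hands-on tail-splitting argument for the $L^2$-continuity of $b\mapsto f(x)\e^{bx}$ (choose $R$ with $\int_{|x|>R}|f|^2\e^{\beta|x|}\d x<\epsilon$, then let $b\to b_0$ on $[-R,R]$). You instead reprove the Paley--Wiener content from scratch: Morera with Fubini for the forward direction, and for the converse the rectangle-contour argument, where the Fubini/subsequence trick that kills the vertical sides is exactly the point at which a naive argument with only $L^2$ slice bounds would fail --- you identify and handle it correctly; this is essentially the internal proof of the cited theorem. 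What your route buys is independence from the citation and the explicit relation $f_b=\e^{bx}f$, from which $f\in\E$ follows at once; what the paper's route buys is brevity. Your dominated-convergence argument for (iii), with dominating function $C|f|^2\cosh\beta x$, is equivalent to the paper's splitting. One cosmetic slip: on the whole line $\e^{2bx}$ does not converge monotonically to $\e^{\pm\beta x}$ as $b\to\pm\beta/2$ (monotonicity holds only on a half-line), so replace monotone convergence by Fatou's lemma, or argue on each half-line separately, using $f=f_0\in L^2(\R)$ to control the half-line where the weight stays bounded; the conclusion $f\in\E$ is unaffected.
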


The proof is deferred to the Appendix \ref{app:proof}. In the following, $\hat f$ always denotes the extension of the Fourier transform of $f\in\E$ according to Lemma \ref{analyticity} (\ref{analyt:ii})-(\ref{analyt:iii}). Using this convention, we introduce an alternative norm on the space $\E$:
\begin{equation}\label{norm_w}
 \nn f\nn_\omega^2:=\|\hat f(\cdot+\ii \beta/2)\|_{L^2(\R)}^2+\|\hat f(\cdot-\ii \beta/2)\|_{L^2(\R)}^2,
\end{equation}
which is equal to $4\pi\|f\|^2_\omega$. 

Furthermore, we notice that there holds a Poincar\'e-type inequality in $\E$:

\begin{lem}[Poincar\'e inequality]
The inequality
\begin{equation}\label{poincare}
 \| f\|_\omega\le C_\beta\| f'\|_\omega
\end{equation}
holds for all $f\in W^{1,2}(\omega,\omega)$, where $C_\beta>0$ is a constant only depending on $\beta$.
\end{lem}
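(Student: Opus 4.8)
The plan is to pass to the Fourier side, where the weight $\omega(x)=\cosh\beta x$ turns into the evaluation of $\hat f$ on the two boundary lines $\Im z = \pm\beta/2$ of the strip $\Omega_{\beta/2}$, and where differentiation becomes multiplication by a symbol that is bounded away from zero. Concretely, I would work with the equivalent norm $\nn\cdot\nn_\omega$ from \eqref{norm_w}, recalling that $\nn f\nn_\omega^2 = 4\pi\|f\|_\omega^2$, so that the asserted inequality is equivalent to $\nn f\nn_\omega \le C_\beta \nn f'\nn_\omega$.

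First I would establish the key multiplier identity: for $f \in W^{1,2}(\omega,\omega)$ and $|b| < \beta/2$,
\[
\widehat{f'}(\xi + \ii b) = \ii(\xi + \ii b)\,\hat f(\xi + \ii b), \qquad \xi \in \R.
\]
This follows from Lemma \ref{analyticity}~(\ref{analyt:ii}): since $|b|<\beta/2$ one checks $\e^{2bx}\le C\cosh\beta x$, so that both $\e^{bx}f$ and $\e^{bx}f'$ lie in $L^2(\R)$, whence $\e^{bx}f\in W^{1,2}(\R)$. Writing $\e^{bx}f' = (\e^{bx}f)' - b\,\e^{bx}f$ and using $\F_{x\to\xi}\big((\e^{bx}f)'\big) = \ii\xi\,\F_{x\to\xi}(\e^{bx}f)$ yields the claimed formula, because $\ii\xi - b = \ii(\xi+\ii b)$.

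Next I would take the boundary values $b\to\pm\beta/2$. By Lemma \ref{analyticity}~(\ref{analyt:iii}), applied to both $f$ and $f'$ (both of which lie in $\E$), the maps $b\mapsto\hat f(\cdot+\ii b)$ and $b\mapsto\widehat{f'}(\cdot+\ii b)$ are continuous into $L^2(\R)$ up to the closed strip, so the identity persists on $\Im z = \pm\beta/2$. Since $|\xi\pm\ii\beta/2|^2 = \xi^2 + \beta^2/4 \ge \beta^2/4$, I obtain
\[
\nn f'\nn_\omega^2 = \sum_{\pm}\int_\R\Big(\xi^2 + \tfrac{\beta^2}{4}\Big)\big|\hat f(\xi\pm\tfrac{\ii\beta}{2})\big|^2\d\xi \ \ge\ \frac{\beta^2}{4}\,\nn f\nn_\omega^2,
\]
which gives the Poincar\'e inequality with the explicit constant $C_\beta = 2/\beta$.

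The only genuinely delicate point is justifying the multiplier identity \emph{on the boundary lines} $\Im z = \pm\beta/2$ rather than merely in the open strip; this is where the continuity statement of Lemma \ref{analyticity}~(\ref{analyt:iii}) is essential, together with the observation that $f'\in\E$ so that the lemma applies to $f'$ as well. Everything else is a short computation. I would also remark that the lower bound $\xi^2+\beta^2/4\ge\beta^2/4$ is precisely the Fourier-side manifestation of the spectral gap produced by the exponential weight: shifting the spectral variable into the complex plane by $\pm\beta/2$ is exactly what prevents the symbol from vanishing.
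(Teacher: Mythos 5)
Your proof is correct and takes essentially the same approach as the paper, whose entire proof reads: use $|\widehat{f'\,}\!(\xi)|=|\xi\hat f(\xi)|$ and $|\xi|\ge\beta/2$ on $|\Im\xi|=\beta/2$, then apply the norm $\nn\cdot\nn_\omega$. Your write-up simply fills in the details the paper leaves implicit (the multiplier identity in the strip, the passage to the boundary lines via Lemma \ref{analyticity}~(\ref{analyt:iii}), and the explicit constant $C_\beta=2/\beta$), all of which are correct.
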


\begin{proof}
Use $|\widehat {f'\,}\!(\xi)|=|\xi\hat f(\xi)|$, and $|\xi|\ge \beta/2$ on $|\Im\xi|=\beta/2$. Then apply the norm $\nn\cdot\nn_\omega$.
\end{proof}

Our next step is to properly define the Fokker-Planck operator in $\E$. To this end we first define the distributional Fokker-Planck operator $\mathfrak Lf:=f''+xf'+f$ for $f\in\sw'$.

\begin{lem}\label{reso_estim}
Let $\zeta\in\C$ with $\Re \zeta\ge 1+\beta^2/2$, and consider the resolvent equation $(\zeta-\mathfrak L)f=g$ for $f,g\in\E$. Then there exists a constant $C>0$ independent of $f,g$, such that
\begin{equation}\label{comp_est}
  \|f\|_\varpi+\|f'\|_\omega \le C\|g\|_\omega,
\end{equation}
where $\varpi(x)=(1+|x|)\omega(x)$.
\end{lem}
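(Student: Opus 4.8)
The plan is to prove this as an a priori energy estimate carried out directly in $x$-space, testing the resolvent equation against $\bar f\omega$. Writing the equation as $\zeta f-f''-xf'-f=g$, I would multiply by $\bar f\omega$, integrate over $\R$, and take the real part. Two integrations by parts do all the work: for the second-order term, $-\int f''\bar f\,\omega=\int|f'|^2\omega+\int f'\bar f\,\omega'$, and since $\Re(f'\bar f)=\tfrac12(|f|^2)'$ together with $\omega''=\beta^2\omega$ this yields $\Re\int f'\bar f\,\omega'=-\tfrac{\beta^2}{2}\int|f|^2\omega$; for the drift term, $\Re\int xf'\bar f\,\omega=-\tfrac12\int|f|^2(x\omega)'=-\tfrac12\int|f|^2(\omega+x\omega')$. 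Collecting everything yields the identity
\[
\Big(\Re\zeta-\tfrac{\beta^2}{2}-\tfrac12\Big)\int|f|^2\omega+\int|f'|^2\omega+\tfrac12\int|f|^2\,x\omega'=\Re\int g\bar f\,\omega.
\]

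The role of the hypothesis $\Re\zeta\ge 1+\beta^2/2$ is now transparent: the coefficient of $\int|f|^2\omega$ is at least $\tfrac12$, while $x\omega'=\beta x\sinh\beta x\ge 0$ pointwise, so every term on the left is nonnegative. Bounding the right-hand side by $\|g\|_\omega\|f\|_\omega$ via Cauchy--Schwarz gives first $\|f\|_\omega\le 2\|g\|_\omega$, then $\|f'\|_\omega^2\le\|g\|_\omega\|f\|_\omega\le 2\|g\|_\omega^2$, and finally $\int|f|^2\,x\omega'\le 4\|g\|_\omega^2$. It remains to convert this last bound into control of $\|f\|_\varpi$. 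Since $\|f\|_\varpi^2=\|f\|_\omega^2+\int|x|\cosh\beta x\,|f|^2$, I would split the region: on $|x|\le 1$ the weight $|x|\cosh\beta x$ is bounded, so that contribution is dominated by $\|f\|_\omega^2$; on $|x|\ge1$ one has $\cosh\beta x\le\coth\beta\,\sinh\beta|x|$, whence $\int_{|x|\ge1}|x|\cosh\beta x\,|f|^2\le\beta^{-1}\coth\beta\int|f|^2\,x\omega'$. Both pieces are $\le C(\beta)\|g\|_\omega^2$, which gives $\|f\|_\varpi\le C\|g\|_\omega$ and closes the estimate with a constant depending only on $\beta$.

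The main obstacle is making this formal computation rigorous, since a priori we only know $f,g\in\E$ and that the equation holds in $\sw'$; in particular the integrations by parts and the vanishing of the boundary terms at $\pm\infty$ are not yet licit. I would first invoke interior regularity for the nondegenerate ODE $-f''-xf'+(\zeta-1)f=g$ with smooth coefficients to obtain $f\in W^{2,2}_{\mathrm{loc}}(\R)\hookrightarrow C^1(\R)$, so the pointwise manipulations are valid on every compact set. Then I would insert a cutoff $\chi_R(x)=\chi(x/R)$, with $\chi\in C_0^\infty$, $\chi\equiv1$ on $[-1,1]$ and $\supp\chi\subset[-2,2]$, testing against $\bar f\omega\chi_R^2$ in place of $\bar f\omega$. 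This keeps all integrals over compact sets and reproduces the identity above up to commutator terms carrying a factor $\chi_R'$ (one of them $\chi_R\chi_R'$). Each such term is supported in the annulus $\{R\le|x|\le 2R\}$ and is bounded by $CR^{-1}\int_{|x|\ge R}|f|^2\omega$ or, after an $\varepsilon$-Young absorption of the single term involving $f'$ into $\int|f'|^2\omega\chi_R^2$, by a vanishing tail of the convergent integral $\int|f|^2\omega$; hence all commutators tend to $0$ as $R\to\infty$. Since the three left-hand integrands are nonnegative and $\chi_R^2\uparrow1$, monotone convergence then yields the full estimate (and, as a byproduct, the finiteness of $\|f'\|_\omega$ and $\|f\|_\varpi$), with a constant $C$ independent of $f$, $g$, and $\zeta$.
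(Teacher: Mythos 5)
Your proof is correct and follows essentially the same route as the paper: testing $(\zeta-\mathfrak L)f=g$ against $\bar f\omega$, integrating by parts and taking real parts yields exactly the paper's identity \eqref{4star} (your left-hand side equals $\int|f'|^2\omega+\tfrac12\int|f|^2\tilde\omega$ with $\tilde\omega=(2\Re\zeta-1-\beta^2)\omega+\beta x\sinh\beta x$), after which both arguments exploit $\Re\zeta\ge1+\beta^2/2$ and the comparability of $\tilde\omega$ with $\varpi$. The only deviations are refinements on your side: you bypass the Poincar\'e inequality by using the coefficient $\Re\zeta-\beta^2/2-1/2\ge\tfrac12$ directly (and your $\coth\beta$ splitting makes the $\tilde\omega\sim\varpi$ comparison explicit), and you supply the elliptic-regularity and cutoff justification of the integrations by parts that the paper carries out only formally.
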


\begin{proof}
Let us fix $\zeta\in\C$ with $\Re \zeta\ge 1+\beta^2/2$. Now we consider the resolvent equation $(\zeta-\mathfrak L)f=g$ for $f,g\in \E\subset\sw'$. Applying $\la\cdot,f\ra_\omega$ to both sides yields:
\begin{align*}
 \int_\R \bar f g\omega\d x&= \int_\R \zeta |f|^2\omega-(f'+xf)'\bar f\omega\d x\\
&= \int_\R |f'|^2\omega+|f|^2(x\omega'+\zeta\omega)+f'\bar f\omega' +f\bar f' x\omega\d x. 
\end{align*}
Next we take the real part:
\begin{align}
\Re \int_\R \bar f g\omega\d x&= \int_\R |f'|^2\omega+|f|^2(x\omega'+\Re(\zeta)\omega)+\frac 12|f^2|'(\omega' +x\omega)\d x\nonumber\\
      &=  \int_\R |f'|^2\omega+\frac 12|f|^2\tilde\omega\d x,\label{4star}
\end{align}
with $\tilde\omega:=-\omega''+x\omega'+(2\Re\zeta-1)\omega$.
For our choice $\omega(x)=\cosh \beta x$ we obtain $\tilde\omega(x)=(2\Re\zeta-1-\beta^2)\omega(x)+x\beta \sinh\beta x$. For $\Re\zeta\ge 1+\beta^2/2$, $\tilde\omega$ is strictly positive. Thus, $\tilde\omega$ is a weight function, and it has the asymptotic behaviour $\tilde\omega(x)\sim\beta|x|\omega(x)$ as $x\to\pm\infty$. Applying the Cauchy-Schwarz inequality to the left hand side of (\ref{4star}) yields
\[
\frac12\|f\|_{\tilde\omega}^2+ \|f'\|^2_\omega\le \|f\|_\omega\|g\|_\omega.
\]
For the left hand side we use $\omega(x)\le \tilde\omega(x)$ and the Poincar\'e inequality (\ref{poincare}) to obtain
\[
 \frac 12\|f\|_{\tilde\omega}+\frac 1{C_\beta}\|f'\|_\omega \le \|g\|_\omega.
\]
The result follows, since the weight functions $\tilde\omega$ and $\varpi$ define equivalent norms.
\end{proof}

\begin{cor}\label{cor_dissip}
 The operator $(L-1-\beta^2/2)|_{C_0^\infty(\R)}$ is dissipative in $\E$.
\end{cor}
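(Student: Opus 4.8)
The plan is to read off dissipativity directly from the integration-by-parts identity \eqref{4star} established in the proof of Lemma \ref{reso_estim}, rather than redoing any computation. Recall that $(L-1-\beta^2/2)|_{C_0^\infty(\R)}$ is dissipative in $\E$ exactly when $\Re\la(L-1-\beta^2/2)f,f\ra_\omega\le 0$ for all $f\in C_0^\infty(\R)$. First I would note that for such smooth, compactly supported $f$ the classical and distributional operators agree, $Lf=\mathfrak Lf$, and I would set the real number $\zeta_0:=1+\beta^2/2$ together with $g:=(\zeta_0-\mathfrak L)f=-(L-1-\beta^2/2)f$. The key observation is that identity \eqref{4star} is nothing but an integration by parts: it holds for \emph{any} $f\in C_0^\infty(\R)$ as soon as one defines $g:=(\zeta-\mathfrak L)f$, and in no way requires $f$ to solve a genuine resolvent equation.

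Next I would specialize \eqref{4star} to the real value $\zeta=\zeta_0$. There the accompanying weight becomes
\[
\tilde\omega=-\omega''+x\omega'+(2\zeta_0-1)\omega=\omega+x\beta\sinh\beta x,
\]
where I used $\omega''=\beta^2\omega$. The whole point of shifting by $1+\beta^2/2$ is that precisely at this threshold $\tilde\omega\ge\omega>0$, since $x\sinh\beta x\ge 0$ for every $x\in\R$. Hence \eqref{4star} yields
\[
\Re\la g,f\ra_\omega=\|f'\|_\omega^2+\tfrac12\int_\R|f|^2\tilde\omega\d x\ge 0.
\]

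Finally, substituting $g=-(L-1-\beta^2/2)f$ converts this into $\Re\la(L-1-\beta^2/2)f,f\ra_\omega\le 0$, which is the asserted dissipativity; keeping the lower bound $\tilde\omega\ge\omega$ even gives the sharper estimate $\Re\la(L-1-\beta^2/2)f,f\ra_\omega\le-\|f'\|_\omega^2-\tfrac12\|f\|_\omega^2$. I do not expect any genuine obstacle in this argument: its entire content is the sign of $\tilde\omega$ at $\Re\zeta=1+\beta^2/2$, and this is exactly the computation that pins down the value of the shift. The only mild care needed is to confirm that \eqref{4star}, derived in the proof of Lemma \ref{reso_estim} under the standing assumption $\Re\zeta\ge 1+\beta^2/2$, remains an identity at the boundary value $\zeta_0=1+\beta^2/2$, which it does since it is an exact integration-by-parts relation.
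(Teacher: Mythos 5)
Your proof is correct and follows essentially the same route as the paper: both specialize the integration-by-parts identity \eqref{4star} to $\zeta=1+\beta^2/2$, observe $\tilde\omega=\omega+x\beta\sinh\beta x\ge\omega>0$, and read off $\Re\la(L-1-\beta^2/2)f,f\ra_\omega\le 0$ for $f\in C_0^\infty(\R)$. The paper additionally invokes the Poincar\'e inequality to sharpen the constant, but that step is not needed for dissipativity itself, and your bound $\le-\|f'\|_\omega^2-\tfrac12\|f\|_\omega^2$ is an equally valid quantitative version.
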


\begin{proof}
 We use the result \eqref{4star} for $\zeta= 1+\beta^2/2$. We then estimate the right hand side for $f\in C_0^\infty(\R)$:
\[\Re\int_\R\bar f(L-\zeta)f\d x \le -\Big(C_\beta+\frac 12\Big)\|f\|_\omega^2\le 0,\]
where we used the Poincar\'e inequality and $\tilde\omega\ge\omega$.
\end{proof}

The above results can be used to establish the proper definition of the Fokker-Planck operator in $\E$:

\begin{lem}\label{abschluss_LL}
 The operator $L|_{C_0^\infty(\R)}$ is closable in $\E$. Its closure $\L:=\cl_\E L|_{C_0^\infty(\R)}$ has the domain of definition $D(\L)=\{f\in\E: \mathfrak Lf\in\E\}$. For $f\in D(\L)$ we have $\L f=\mathfrak Lf$.
\end{lem}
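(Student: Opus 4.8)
The plan is to prove closability and identify the domain of the closure in two conceptually distinct moves: first establish that $L|_{C_0^\infty(\R)}$ is closable by exhibiting a densely defined adjoint (or, more directly, by using the dissipativity from Corollary \ref{cor_dissip}), and then pin down the domain $D(\L)$ by sandwiching the abstract closure between the graph closure and the distributional operator $\mathfrak L$ restricted to $\E$.

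For closability, I would argue as follows. Since $(L-1-\beta^2/2)|_{C_0^\infty(\R)}$ is dissipative in $\E$ (Corollary \ref{cor_dissip}), and dissipativity is inherited by the closure of the graph, the operator is closable provided its closure is still an operator (i.e.~the graph closure contains no element of the form $(0,g)$ with $g\ne0$). The cleanest route is to invoke the resolvent estimate of Lemma \ref{reso_estim}: if $(f_n)\subset C_0^\infty(\R)$ with $f_n\to 0$ in $\E$ and $Lf_n\to h$ in $\E$, then applying \eqref{comp_est} to the shifted operator (equivalently, testing $(\zeta-\mathfrak L)f_n=g_n$ with $\zeta=1+\beta^2/2$ and $g_n=(\zeta-L)f_n\to\zeta\cdot0-h=-h$) forces $\|f_n'\|_\omega\le C\|g_n\|_\omega$ to stay bounded, and one concludes $h=0$ by pairing against test functions and using $f_n\to0$ distributionally. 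Thus the graph closure is the graph of an operator, which is exactly closability.

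Next I would identify the domain. The inclusion $D(\L)\subseteq\{f\in\E:\mathfrak Lf\in\E\}$ is immediate: for $f\in D(\L)$ there are $f_n\in C_0^\infty$ with $f_n\to f$ and $Lf_n\to \L f$ in $\E$; since $\E\inj\sw'$ continuously and $L f_n=\mathfrak L f_n\to\mathfrak Lf$ in $\sw'$, we get $\mathfrak Lf=\L f\in\E$. The reverse inclusion is the substantive part. Given $f\in\E$ with $g:=\mathfrak Lf\in\E$, I must produce an approximating sequence in $C_0^\infty$. The standard approach is mollification and truncation: set $f_\epsilon=(\chi_R f)*\rho_\epsilon$ for a cutoff $\chi_R$ and mollifier $\rho_\epsilon$, verify $f_\epsilon\to f$ in $\E$, and—crucially—show $L f_\epsilon\to g$ in $\E$. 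The error terms involve commutators of $L$ with the cutoff ($[\,x\partial_x,\chi_R]$ produces a factor $x\chi_R'$) and with the mollifier; controlling the term $x f_\epsilon'$ in the $\omega$-weighted norm is where the weight's exponential growth and the companion estimate on $\|f'\|_\omega$ (again from Lemma \ref{reso_estim}, which already bounds $f'$ in $\omega$ and $f$ in the heavier weight $\varpi=(1+|x|)\omega$) must be used to absorb the $x$-growth. This is the main obstacle: the coefficient $x$ is unbounded, so naive density of $C_0^\infty$ in graph norm fails unless one has the weighted a priori control that Lemma \ref{reso_estim} supplies precisely in the norms $\|f\|_\varpi+\|f'\|_\omega$.

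I expect the hard part to be the commutator/cutoff estimate showing $x\,\partial_x$ of the truncated mollification converges, and I would handle it by first treating $\Re\zeta$ large so that Lemma \ref{reso_estim} applies, proving $\ran(\zeta-\L)=\E$ via the a priori estimate together with closedness, and then using that $(\zeta-\mathfrak L)^{-1}$ maps $\E$ into $\{f\in\E:\mathfrak Lf\in\E\}$ to conclude the two operators have the same domain. Concretely, $\zeta-\L$ and $\zeta-\mathfrak L|_{\{f\in\E:\mathfrak Lf\in\E\}}$ are both injective with the same range, and the estimate \eqref{comp_est} guarantees the inverse is bounded, forcing equality of domains. This bypasses delicate mollification bookkeeping in favor of a functional-analytic surjectivity argument driven entirely by the resolvent estimate.
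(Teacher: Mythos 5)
Your overall architecture coincides with the paper's: closability via the dissipativity of $(L-1-\beta^2/2)|_{C_0^\infty(\R)}$ (Corollary \ref{cor_dissip} together with \cite[Theorem 1.4.5]{pazy}; your direct check that the graph closure contains no pair $(0,h)$ with $h\neq0$ also works, via distributional limits alone), the inclusion $D(\L)\subseteq\{f\in\E:\mathfrak Lf\in\E\}$ by passing to the limit in $\sw'$, and the reverse inclusion via bijectivity of $\zeta-\L$ for $\Re\zeta\ge1+\beta^2/2$ combined with the injectivity of $\zeta-\mathfrak L$ on $\{f\in\E:\mathfrak Lf\in\E\}$ that \eqref{comp_est} provides. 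You are also right to abandon the mollification/cutoff route in favour of this functional-analytic argument; that is precisely what the paper does.

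However, there is one genuine gap: surjectivity. You assert $\ran(\zeta-\L)=\E$ follows ``via the a priori estimate together with closedness''. Those two facts yield only that $\zeta-\L$ is injective with \emph{closed} range; they do not exclude that the range is a proper closed subspace of $\E$. The bound \eqref{comp_est} is one-sided --- it controls any solution of $(\zeta-\mathfrak L)f=g$ but says nothing about the existence of solutions --- so density of the range needs an independent input (equivalently, triviality of $\ker(\bar\zeta-\L^*)$ in the dual weighted space, which you do not address). Your subsequent step, that $\zeta-\L$ and $\zeta-\mathfrak L|_{\{f\in\E:\,\mathfrak Lf\in\E\}}$ are ``both injective with the same range'', inherits the same hole: the inclusion of ranges is trivial, but their equality is exactly the surjectivity not yet proved. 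The paper closes this gap using the $E$-theory of Theorem \ref{prop:fp_in_H}: for $\Re\zeta\ge1+\beta^2/2$ one has $\zeta\in\rho(L)$ in $E$, and since $D(L)\subset D(\L)$ (the $E$-graph norm dominates the $\E$-graph norm), $\ran(\zeta-\L)\supseteq(\zeta-L)D(L)=E$, which is dense in $\E$; more precisely, $(\zeta-L)^{-1}$ is densely defined (on $E$) and bounded in the $\E$-norm by \eqref{comp_est}, so its closure is an everywhere-defined bounded inverse, giving $\zeta\in\rho(\L)$. With that single ingredient added --- which your setup already has at hand --- your argument becomes exactly the paper's proof of Lemma \ref{abschluss_LL}.
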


The proof is deferred to the Appendix \ref{app:proof}. It also yields the following result:

\begin{cor}\label{wob}
 The resolvent set $\rho(\L)$ is non-empty. It contains the half-plane $\{\zeta\in\C:\Re \zeta\ge 1+\beta^2/2\}$.
\end{cor}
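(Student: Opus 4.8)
The plan is to show that for every $\zeta\in\C$ with $\Re\zeta\ge 1+\beta^2/2$ the map $\zeta-\L\colon D(\L)\to\E$ is a bijection with bounded inverse; non-emptiness of $\rho(\L)$ is then immediate. Injectivity, boundedness of the inverse on the range, and closedness of the range all come essentially for free from Lemma \ref{reso_estim}. Indeed, any $f\in D(\L)$ solves $(\zeta-\mathfrak L)f=(\zeta-\L)f=:g\in\E$, so \eqref{comp_est} gives $\|f\|_\omega\le\|f\|_\varpi\le C\|g\|_\omega$; hence $\zeta-\L$ is injective and its inverse is bounded on $\ran(\zeta-\L)$. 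For closedness of the range, take $g_n=(\zeta-\L)f_n$ with $g_n\to g$ in $\E$ and apply \eqref{comp_est} to the differences $f_n-f_m$: this shows that $(f_n)$ is Cauchy in $\E$ and $(f_n')$ Cauchy in $L^2(\omega)$, so $f_n\to f$ and $\mathfrak Lf_n=\zeta f_n-g_n\to\zeta f-g$ in $\E$; since $\L$ is closed, $f\in D(\L)$ and $(\zeta-\L)f=g$.

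The only nontrivial point, and the crux of the argument, is the surjectivity of $\zeta-\L$. As the range is closed it suffices to produce, for an arbitrary $g\in\E$, some $f\in D(\L)$ with $(\zeta-\mathfrak L)f=g$, and I would obtain $f$ by exhausting $\R$ with bounded intervals. For $R>0$ consider the Dirichlet problem $(\zeta-\mathfrak L)f_R=g$ on $(-R,R)$ with $f_R(\pm R)=0$. On the bounded interval the drift coefficient $x$ is bounded, so the sesquilinear form associated with $\zeta-\mathfrak L$ in the inner product $\la\cdot,\cdot\ra_\omega$ is bounded on $H^1_0(-R,R)$; moreover the computation leading to \eqref{4star} shows, with the boundary terms now dropping out because of the homogeneous data, that its real part dominates $\|f_R'\|_\omega^2+\tfrac12\|f_R\|_\omega^2$, which is coercive on $H^1_0(-R,R)$. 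Thus the Lax--Milgram theorem yields a unique weak solution $f_R$, and interior elliptic regularity upgrades it to an $H^2$-solution.

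The decisive ingredient is that the a priori bound of Lemma \ref{reso_estim} is in fact uniform in $R$: repeating the integration by parts of \eqref{4star} on $(-R,R)$ the boundary terms vanish since $f_R(\pm R)=0$, and together with the Poincar\'e inequality \eqref{poincare} applied to the zero-extension of $f_R$ one arrives at $\|f_R\|_\varpi+\|f_R'\|_\omega\le C\|g\|_\omega$ with $C$ independent of $R$. Consequently the zero-extended family $(f_R)$ is bounded in the Hilbert space $W^{1,2}(\omega,\omega)$, so along a subsequence $f_R\rightharpoonup f$ weakly, with $\|f\|_\varpi+\|f'\|_\omega\le C\|g\|_\omega$ by weak lower semicontinuity. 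Testing the weak formulation against a fixed $\psi\in C_0^\infty(\R)$ (whose support lies in $(-R,R)$ for $R$ large) and passing to the limit gives $(\zeta-\mathfrak L)f=g$ in $\sw'$; since then $\mathfrak Lf=\zeta f-g\in\E$, Lemma \ref{abschluss_LL} yields $f\in D(\L)$ and $(\zeta-\L)f=g$. This proves surjectivity, hence $\zeta\in\rho(\L)$, and in particular $\rho(\L)\neq\emptyset$.

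I expect the surjectivity to be the genuine obstacle. The unbounded drift term $xf'$ obstructs a direct Lax--Milgram argument on all of $\R$, because the form is not bounded on $W^{1,2}(\omega,\omega)$ (the term $\int xf'\bar\psi\,\omega$ cannot be controlled by $\|\psi\|_{W^{1,2}(\omega,\omega)}$), which is precisely why the bounded-domain approximation combined with the $R$-uniform estimate of Lemma \ref{reso_estim} is the natural remedy. An alternative would be to deduce density of the range from the dissipativity in Corollary \ref{cor_dissip} via a Lumer--Phillips argument, reducing the problem to $\ker(\bar\zeta-\L^*)=\{0\}$, but this ultimately leads to the same kind of weighted ODE analysis.
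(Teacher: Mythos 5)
Your proof is correct, but it takes a genuinely different route from the paper's. The paper obtains this corollary as a byproduct of the appendix proof of Lemma \ref{abschluss_LL}: for $\Re\zeta\ge 1+\beta^2/2$, the operator $\zeta-L$ is boundedly invertible in the smaller space $E$ by the self-adjointness and the spectrum $-\N_0$ from Theorem \ref{prop:fp_in_H}; the a priori bound \eqref{comp_est} then shows that $(\zeta-L)^{-1}$, viewed on the dense subspace $E\inj\E$, is bounded in the $\E$-norm, and its closure is $(\zeta-\L)^{-1}\in\BB(\E)$. Surjectivity is thus inherited from the reference space $E$, with no new PDE solving in $\E$ at all. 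You instead construct the solution from scratch: Lax--Milgram on $(-R,R)$ with Dirichlet data, the (correct) observation that the integration by parts leading to \eqref{4star} goes through with vanishing boundary terms so that the estimate of Lemma \ref{reso_estim} becomes $R$-uniform via the Poincar\'e inequality \eqref{poincare} applied to zero-extensions, then weak compactness in $W^{1,2}(\omega,\omega)$ and passage to the limit in the weak formulation against fixed $\psi\in C_0^\infty(\R)$. This is sound (the interior $H^2$-regularity step is not even needed, since the distributional identity suffices in the limit), and what it buys is a self-contained existence argument that avoids the spectral theory of $L$ in $E$ entirely, hence would survive in settings without a convenient smaller space with known spectrum; what the paper's route buys is brevity and the simultaneous identification of $D(\L)$. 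One caveat you should be aware of: your final step invokes the domain characterization $D(\L)=\{f\in\E:\mathfrak L f\in\E\}$ of Lemma \ref{abschluss_LL}. Within the paper's ordering this citation is legitimate, but the paper proves precisely that nontrivial inclusion by means of the resolvent existence you are re-deriving, so your argument is independent of the $E$-theory only modulo that lemma. A fully self-contained version would still have to show directly that your limit $f$ is graph-norm approximable by $C_0^\infty$-functions, and your truncation scheme does not by itself settle this (the zero-extensions of the $f_R$ have kinks at $\pm R$, so $\mathfrak L$ applied to them produces singular boundary contributions and they do not converge in graph norm); equivalently, your construction a priori inverts the \emph{maximal} realization of $\mathfrak L$, and identifying it with the closure $\L$ is exactly the content of Lemma \ref{abschluss_LL}.
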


As it turns out, the resolvent estimate \eqref{comp_est} is strong enough to prove compactness of the resolvent. To this end we shall use the following simplified version of \cite[Theorem 2.4]{Opic1989}:

\begin{lem}
Let $w,w_0,w_1$ be weight functions, and $(\Omega_n)_{n\in\N}$ a monotonically increasing sequence of
subsets of $\R$ that converges to $\R$. Assume that for all $n\in\N$ there holds the compact embedding $W^{1,2}(\Omega_n;w_0,w_1)\inj\inj L^2(\Omega_n; w)$. Then
\[
W^{1,2}(w_0,w_1)\inj\inj L^2(w)\quad\aq\quad \lim_{n\to\infty}\sup_{\|f\|_{w_0,w_1}\le 1}\|f\|_{\R\backslash\Omega_n;w}=0.
\]
\end{lem}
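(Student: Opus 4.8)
The statement is an equivalence, so the plan is to prove both implications after recording one monotonicity observation. Since $(\Omega_n)$ increases to $\R$, the complements $\R\setminus\Omega_n$ decrease, hence for each fixed $f$ the tail norm $\|f\|_{\R\setminus\Omega_n,w}$ is non-increasing in $n$; taking the supremum over the unit ball, the quantity $a_n:=\sup_{\|f\|_{w_0,w_1}\le 1}\|f\|_{\R\setminus\Omega_n,w}$ is non-increasing in $[0,\infty]$. Thus the right-hand condition is exactly $a_n\to 0$, and I would work with the sequence $a_n$ throughout.

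For the implication ``$\Leftarrow$'', assume $a_n\to 0$. First I would note the embedding is continuous: for $\|f\|_{w_0,w_1}\le 1$ one has $\|f\|_w\le\|f\|_{\Omega_n,w}+a_n$, and the local compact embedding is in particular bounded, so $\|f\|_{\Omega_n,w}\le C_n\|f\|_{\Omega_n,w_0,w_1}\le C_n$, using that restriction to $\Omega_n$ does not increase the Sobolev norm ($\|f\|_{\Omega_n,w_0,w_1}\le\|f\|_{w_0,w_1}$). To get compactness I take a bounded sequence $(f_j)$ in $W^{1,2}(w_0,w_1)$, say $\|f_j\|_{w_0,w_1}\le 1$, and show its image is precompact in $L^2(w)$. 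For each fixed $n$ the restrictions $f_j|_{\Omega_n}$ form a bounded sequence in $W^{1,2}(\Omega_n;w_0,w_1)$, so the local compact embedding yields a subsequence converging in $L^2(\Omega_n;w)$; a diagonal extraction over $n$ produces a single subsequence, still denoted $(f_j)$, converging in $L^2(\Omega_n;w)$ for every $n$. Then I would glue: given $\epsilon>0$, choose $n$ with $a_n<\epsilon$, so $\|f_j\|_{\R\setminus\Omega_n,w}\le a_n<\epsilon$ for all $j$; since $(f_j)$ is Cauchy in $L^2(\Omega_n;w)$, for $j,k$ large $\|f_j-f_k\|_{\Omega_n,w}<\epsilon$, and the triangle inequality gives $\|f_j-f_k\|_w\le\|f_j-f_k\|_{\Omega_n,w}+\|f_j\|_{\R\setminus\Omega_n,w}+\|f_k\|_{\R\setminus\Omega_n,w}<3\epsilon$. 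Hence $(f_j)$ is Cauchy in the complete space $L^2(w)$ and converges there, so the embedding is compact. (The density required by $\inj\inj$ holds because $C_0^\infty(\R)\subset W^{1,2}(w_0,w_1)$ is already dense in $L^2(w)$.)

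For ``$\Rightarrow$'', I would argue by contradiction. If $a_n\not\to 0$, then by monotonicity $a_n\ge a$ for some $a>0$ and all $n$, so for each $n$ I may pick $f_n$ with $\|f_n\|_{w_0,w_1}\le 1$ and $\|f_n\|_{\R\setminus\Omega_n,w}\ge a/2$. By the assumed compact embedding a subsequence of $(f_n)$ converges in $L^2(w)$ to some $g$; relabel it $(f_n)$. For $n\ge m$ one has $\R\setminus\Omega_n\subseteq\R\setminus\Omega_m$, whence $\|f_n\|_{\R\setminus\Omega_n,w}\le\|f_n-g\|_w+\|g\|_{\R\setminus\Omega_m,w}$. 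Because $g\in L^2(w)$ and $\R\setminus\Omega_m$ decreases to a null set, dominated convergence gives $\|g\|_{\R\setminus\Omega_m,w}\to 0$; fixing $m$ large and then $n$ large makes the right-hand side $<a/2$, contradicting $\|f_n\|_{\R\setminus\Omega_n,w}\ge a/2$. This forces $a_n\to 0$.

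The computations are elementary, and the real content is organizational. The main obstacle is the bookkeeping in ``$\Leftarrow$'': the \emph{simultaneous} diagonal extraction that converges on every $\Omega_n$ at once, followed by the gluing estimate that trades local convergence against the uniform tail bound. In ``$\Rightarrow$'' the crucial (but standard) point is recognizing that a single fixed $L^2(w)$-function is automatically tight, i.e.\ its tails vanish, which is precisely what allows the strong limit $g$ to be discarded. The only technical points needing care are that restriction to $\Omega_n$ does not increase the Sobolev norm (so local boundedness of $(f_j)$ is free) and the monotonicity of $a_n$, which converts the stated supremum condition into a plain sequential limit.
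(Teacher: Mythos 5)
Your proof is correct; the only caveat is that there is no in-paper argument to compare it against, since the paper states this lemma without proof, quoting it as a simplified version of Theorem 2.4 of Opic (1989). Your self-contained argument is the standard exhaustion proof and, in this Hilbert-space case $p=q=2$, is essentially what underlies Opic's theorem: monotonicity of $a_n$, a diagonal extraction over the exhaustion $(\Omega_n)$ glued to the uniform tail bound for ``$\Leftarrow$'', and tightness of the strong limit for ``$\Rightarrow$'' (dominated convergence, using that $\bigcap_m(\R\setminus\Omega_m)$ is null because $(\Omega_n)$ converges to $\R$). All the estimates check out: the splitting $\|g\|_{w}\le\|g\|_{\Omega_n,w}+\|g\|_{\R\setminus\Omega_n,w}$ is legitimate since $\|g\|_w^2$ is the sum of the two squared pieces, and restriction to $\Omega_n$ indeed does not increase the $W^{1,2}$-norm because the distributional derivative localizes (here you tacitly take the $\Omega_n$ to be domains, consistent with the paper's convention for weighted spaces on $\Omega\subseteq\R$). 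Two minor points you assert without detail are fine but worth knowing: the finiteness of $a_n$ in ``$\Rightarrow$'' follows from the boundedness of the compact embedding, and the density required by the paper's notation $\inj\inj$ reduces to density of $C_0^\infty(\R)$ in $L^2(w)$, which holds for weight functions in the paper's sense ($w\in L^1_{\mathrm{loc}}$, locally bounded below) by truncation, Lusin-type approximation and mollification. So your proposal supplies a complete proof where the paper delegates to the literature; what the citation buys the authors is generality (Opic's result covers $W^{1,p}\to L^q$ with distinct exponents), while your argument buys self-containedness in exactly the form the paper needs.
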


From this we deduce immediately the following lemma:

\begin{lem}\label{comp_embed}
Let $w,w_0,w_1$ be weight functions. If $\lim_{|x|\to\infty}w(x)/w_0(x)=0$, then the compact embedding holds:
\[W^{1,2}(w_0,w_1)\inj\inj L^2(w).\]
\end{lem}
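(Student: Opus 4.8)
The plan is to apply the preceding lemma with the exhausting sequence $\Omega_n := (-n,n)$, so that only its two hypotheses remain to be checked: the local compact embeddings $W^{1,2}(\Omega_n;w_0,w_1)\inj\inj L^2(\Omega_n;w)$ for every $n$, and the tail condition on the right-hand side of the stated equivalence. Once both are in place, the forward implication of that lemma delivers the claim.

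For the local embeddings I would argue on the bounded interval $\Omega_n$ as follows. Since $w_0,w_1$ are weight functions, they are bounded below by positive constants on $\Omega_n$, so $W^{1,2}(\Omega_n;w_0,w_1)$ embeds continuously into the standard (unweighted) Sobolev space $W^{1,2}(\Omega_n)$. In one dimension the latter embeds compactly into $C(\overline{\Omega_n})$, via the bound $|f(x)-f(y)|\le\|f'\|_{L^2(\Omega_n)}|x-y|^{1/2}$ together with the Arzel\`a--Ascoli theorem. Finally, as $w\in L^1_{\mathrm{loc}}$ gives $w\in L^1(\Omega_n)$, one has $\|f\|_{\Omega_n;w}\le\|w\|_{L^1(\Omega_n)}^{1/2}\,\|f\|_{C(\overline{\Omega_n})}$, so $C(\overline{\Omega_n})$ embeds continuously into $L^2(\Omega_n;w)$. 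Composing the compact middle map with the two continuous ones yields the required compact embedding on each $\Omega_n$. The one point needing care here is that the weights need not be bounded above on $\Omega_n$ (they are only locally integrable), which is precisely why I route the argument through $C(\overline{\Omega_n})$ rather than directly through $L^2(\Omega_n)$.

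For the tail condition, set $\epsilon_n:=\operatorname{ess\,sup}_{|x|\ge n} w(x)/w_0(x)$; the hypothesis $\lim_{|x|\to\infty}w/w_0=0$ gives $\epsilon_n\to0$. Then for any $f$ with $\|f\|_{w_0,w_1}\le1$,
\[
\|f\|_{\R\setminus\Omega_n;w}^2=\int_{|x|>n}|f|^2\,\frac{w}{w_0}\,w_0\d x\le\epsilon_n\int_{|x|>n}|f|^2 w_0\d x\le\epsilon_n\|f\|_{w_0}^2\le\epsilon_n,
\]
so that $\sup_{\|f\|_{w_0,w_1}\le1}\|f\|_{\R\setminus\Omega_n;w}\to0$. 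This verifies the second hypothesis, and the preceding lemma then gives $W^{1,2}(w_0,w_1)\inj\inj L^2(w)$.

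I expect no serious obstacle: the statement is deduced essentially immediately from the preceding lemma. The tail estimate is the conceptual heart, being the only place where the decay assumption $w/w_0\to0$ is used, while the local embeddings require only the standard one-dimensional Sobolev compactness, with the minor adjustment above to accommodate the merely integrable weight $w$.
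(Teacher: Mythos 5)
Your proposal is correct and follows exactly the route the paper intends: the paper deduces this lemma ``immediately'' from the preceding Opic-type criterion, which is precisely your argument --- the tail estimate $\|f\|^2_{\R\setminus\Omega_n;w}\le\epsilon_n\|f\|^2_{w_0}$ with $\epsilon_n=\operatorname{ess\,sup}_{|x|\ge n}w/w_0\to0$ is the only place the decay hypothesis enters. Your additional verification of the local compact embeddings on $\Omega_n=(-n,n)$ (routing through $C(\overline{\Omega_n})$ because the weights are only locally integrable, and using that weight functions are bounded below on the compact set $[-n,n]$) correctly fills in the details the paper leaves implicit.
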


This compact embedding allows to prove that $R_\L(\zeta)$ is compact:
\begin{trm}\label{trm:compactness}
For any $\zeta \in\rho(\L)$ the resolvent operator $R_\L(\zeta):\E\to\E$ is compact. In particular $\sigma(\L)=\sigma_p(\L)$, i.e.~the spectrum of $\L$ consists entirely of eigenvalues.
\end{trm}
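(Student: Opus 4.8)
The plan is to first establish that the resolvent is compact at a single convenient point $\zeta_0$, then transport this compactness to every $\zeta\in\rho(\L)$ by the resolvent identity, and finally invoke the Riesz--Schauder theory of compact operators to read off that $\sigma(\L)$ consists only of eigenvalues.

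First I would fix $\zeta_0:=1+\beta^2/2$, which lies in $\rho(\L)$ by Corollary \ref{wob}. For $g\in\E$ put $f:=R_\L(\zeta_0)g\in D(\L)$, so that $(\zeta_0-\mathfrak L)f=g$ holds in the sense of Lemma \ref{abschluss_LL}. The resolvent estimate \eqref{comp_est} of Lemma \ref{reso_estim} then reads $\|f\|_\varpi+\|f'\|_\omega\le C\|g\|_\omega$ with $\varpi(x)=(1+|x|)\omega(x)$, which is exactly the statement that $R_\L(\zeta_0)$ maps $\E=L^2(\omega)$ \emph{boundedly} into the weighted Sobolev space $W^{1,2}(\varpi,\omega)$. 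The decisive point is that $\varpi$ outgrows $\omega$ by the factor $1+|x|$, so that $\lim_{|x|\to\infty}\omega(x)/\varpi(x)=\lim_{|x|\to\infty}(1+|x|)^{-1}=0$. Hence Lemma \ref{comp_embed}, applied with $w_0=\varpi$, $w_1=\omega$ and $w=\omega$, supplies the compact embedding $W^{1,2}(\varpi,\omega)\inj\inj L^2(\omega)=\E$. Factoring $R_\L(\zeta_0)$ as the bounded map $\E\to W^{1,2}(\varpi,\omega)$ followed by this compact embedding exhibits $R_\L(\zeta_0)\in\BB(\E)$ as compact.

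Next I would pass to an arbitrary $\zeta\in\rho(\L)$ through the first resolvent identity
\[R_\L(\zeta)=R_\L(\zeta_0)+(\zeta_0-\zeta)\,R_\L(\zeta)\,R_\L(\zeta_0).\]
As $R_\L(\zeta_0)$ is compact and $R_\L(\zeta)$ is bounded, the product $R_\L(\zeta)R_\L(\zeta_0)$ is compact, so $R_\L(\zeta)$ is a sum of compact operators and therefore compact. For the spectral statement I would use that $\lambda\in\sigma(\L)$ if and only if $(\zeta_0-\lambda)^{-1}$ is a nonzero spectral value of the compact operator $R_\L(\zeta_0)$; since the nonzero spectrum of a compact operator consists entirely of eigenvalues of finite multiplicity, each such $(\zeta_0-\lambda)^{-1}$ is an eigenvalue of $R_\L(\zeta_0)$ whose eigenvector is an eigenvector of $\L$ for the eigenvalue $\lambda$. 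This yields $\sigma(\L)=\sigma_p(\L)$.

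The resolvent-identity step and the Riesz--Schauder argument are entirely routine; the real content sits in the single-point compactness step. There the crux is the recognition that the additional weight factor $1+|x|=\varpi/\omega$ gained in Lemma \ref{reso_estim} is precisely the margin that activates the compact Sobolev embedding of Lemma \ref{comp_embed}. Had the resolvent estimate controlled only $\|f\|_\omega$ in place of the stronger $\|f\|_\varpi$, the embedding into $L^2(\omega)$ would merely be continuous and no compactness could be concluded.
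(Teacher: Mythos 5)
Your proposal is correct and follows essentially the same route as the paper: the resolvent estimate of Lemma \ref{reso_estim} gives $R_\L(\zeta_0)\in\BB(\E,W^{1,2}(\varpi,\omega))$, the weight gain $\omega/\varpi\sim 1/|x|\to 0$ activates the compact embedding of Lemma \ref{comp_embed}, and compactness propagates to all of $\rho(\L)$ with the spectral conclusion following. The only cosmetic difference is that the paper simply cites \cite[Theorem III.6.29]{kato} for the last two steps, whereas you write out the first resolvent identity and the Riesz--Schauder spectral-mapping argument explicitly, both of which are carried out correctly.
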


\begin{proof}
To begin with, we fix some $\zeta\in\C$ with $\Re \zeta\ge 1+\beta^2/2$. According to Lemma \ref{reso_estim} we have the estimate \eqref{comp_est}, which we can reformulate: There exists a constant $C>0$ such that 
\[\|R_\L(\zeta)g\|_{\varpi,\omega}\le  C\|g\|_\omega,\quad \forall g\in\E.\]
Hence $R_\L(\zeta)\in\BB(\E, W^{1,2}(\varpi,\omega))$. Now there holds the asymptotic behaviour $\omega(x)/\varpi(x)\allowbreak\sim 1/|x|\to 0$ as $x\to\pm\infty.$ Therefore we may apply Lemma \ref{comp_embed} for $w=w_1=\omega$ and $w_0=\varpi$, which yields the compact embedding $W^{1,2}(\varpi,\omega)\inj\inj\E$. Thus, the resolvent $R_\L(\zeta)\in\BB(\E)$ is compact for $\Re \zeta\ge 1+\beta^2/2$. But this already implies the compactness of $R_\L(\zeta)$ for all $\zeta\in\rho(\L)$, cf.~\cite[Theorem III.6.29]{kato}. The same reference confirms that $\sigma(\L)=\sigma_p(\L)$.
\end{proof}

With these preparations we can now characterize the spectrum of $\L$: 

\begin{prop}\label{prop_spec}
 We have $\sigma(\L)=-\N_0$. Each eigenspace is one-dimensional, and for $k\in\N_0$ we have $\ker(k+\L)=\spn\{\mu_k\}$.
\end{prop}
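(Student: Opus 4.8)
The plan is to exploit that $\L$ has compact resolvent (Theorem~\ref{trm:compactness}), so that $\sigma(\L)=\sigma_p(\L)$ and it suffices to solve the eigenvalue equation $\mathfrak Lf=\lambda f$ in $\E$ completely. I would split the work into two halves: first exhibit the eigenpairs $(-k,\mu_k)$ explicitly, and then use the analyticity characterisation of $\E$ from Lemma~\ref{analyticity} to rule out every other $\lambda$ and to pin each eigenspace down to dimension one.

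For the inclusion $-\N_0\subseteq\sigma(\L)$ I would simply check that the Hermite functions $\mu_k=\tfrac1{\sqrt{2\pi}}H_k\mu$ already act as eigenfunctions here. From Theorem~\ref{prop:fp_in_H} we know $L\mu_k=-k\mu_k$, and since $\mu_k$ is a polynomial times $\e^{-x^2/2}$ it decays faster than any exponential; hence $\mu_k\in\E$ and $\mathfrak L\mu_k=-k\mu_k\in\E$, so $\mu_k\in D(\L)$ with $\L\mu_k=-k\mu_k$. This shows every $-k$ is an eigenvalue and $\spn\{\mu_k\}\subseteq\ker(k+\L)$.

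The substance of the proof is the converse. Let $\lambda$ be any eigenvalue and $f\neq0$ an eigenfunction, so $\mathfrak Lf=\lambda f$. Because $\cosh\beta x\ge\tfrac12\e^{\beta|x|}$, every element of $\E$ lies in $L^1(\R)$, so by Lemma~\ref{analyticity} the transform $\hat f$ is analytic on the strip $\Omega_{\beta/2}$. Fourier-transforming $\mathfrak Lf=\lambda f$ turns the second-order equation into the first-order ODE
\[\xi\hat f'(\xi)+(\xi^2+\lambda)\hat f(\xi)=0,\]
valid throughout $\Omega_{\beta/2}$. Away from the origin this has the two-parameter family of solutions $c_\pm\,\xi^{-\lambda}\e^{-\xi^2/2}$ on $\{\pm\xi>0\}$, but the crucial point is that $\hat f$ must be analytic across the regular singular point $\xi=0$. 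Writing the analytic germ $\hat f(\xi)=\sum_{n\ge0}a_n\xi^n$ and matching powers gives $\lambda a_0=0$, $(1+\lambda)a_1=0$, and the recursion $(n+\lambda)a_n=-a_{n-2}$ for $n\ge2$. If $\lambda\notin-\N_0$ then $n+\lambda\neq0$ for all $n\ge0$, so all $a_n$ vanish and $f=0$, a contradiction; hence $\sigma(\L)\subseteq-\N_0$. If $\lambda=-k$, the recursion forces all coefficients to zero except a single free parameter $a_k$, yielding $\hat f=c\,\xi^k\e^{-\xi^2/2}$. Thus the eigenspace is one-dimensional, and since the inverse transform of $\xi^k\e^{-\xi^2/2}$ is, up to a constant, exactly $\mu_k$, we conclude $\ker(k+\L)=\spn\{\mu_k\}$.

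The step I expect to require the most care is the passage to the ODE and its analysis at $\xi=0$. One has to justify the distributional Fourier identities (legitimate here since $f\in L^1(\R)$ and $\hat f$ is smooth on the strip) and, more importantly, to use the analyticity from Lemma~\ref{analyticity} in an essential way: on $\xi\neq0$ the solution space of the ODE is two-dimensional and generically involves the branch function $\xi^{-\lambda}$, and it is precisely the demand that this function continue analytically through the singular point $\xi=0$ that both collapses the solution space to dimension one and forces $-\lambda\in\N_0$. The Frobenius-type bookkeeping of the recursion is what makes this quantitative and simultaneously delivers the eigenvalues, the dimension count, and the explicit eigenfunctions.
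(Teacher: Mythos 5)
Your proof is correct, and while it shares the paper's overall scaffolding --- compact resolvent (Theorem \ref{trm:compactness}) reduces everything to solving the eigenvalue equation in $\E$, the Fourier transform turns $\mathfrak Lf=\lambda f$ into the first-order ODE $\xi\hat f'(\xi)+(\xi^2+\lambda)\hat f(\xi)=0$, and the analyticity of $\hat f$ on $\Omega_{\beta/2}$ from Lemma \ref{analyticity} is the mechanism that quantizes the spectrum --- you execute the decisive step by a genuinely different local argument. The paper writes down the general solution $C_\pm\mu(\xi)\xi^{-\zeta}$ on the half-lines $\R^\pm$, extends both branches to the half-planes $\{\Re\xi>0\}$ and $\{\Re\xi<0\}$ using the fixed branch of the logarithm, and then excludes $\zeta\notin\Z$ because the two extensions fail to glue analytically across the imaginary axis, and $\zeta\in\N$ because of the pole at $\xi=0$; only $\zeta\in-\N_0$ with $C_+=C_-$ survives. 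You instead run a Frobenius-type analysis at the regular singular point $\xi=0$: the Taylor expansion $\hat f=\sum_{n\ge0}a_n\xi^n$ gives the recursion $(n+\lambda)a_n=-a_{n-2}$, which annihilates every coefficient unless $\lambda=-k\in-\N_0$, in which case exactly one parameter $a_k$ is free and $\hat f=c\,\xi^k\e^{-\xi^2/2}$, so $\ker(k+\L)=\spn\{\mu_k\}$ via $\hat\mu_k(\xi)=(\ii\xi)^k\mu(\xi)$. Your route is arguably tidier at this point: it avoids the branch-of-logarithm bookkeeping entirely and delivers the quantization, the dimension count, and the eigenfunctions in a single recursion; the paper's branch-matching version, in exchange, exhibits the global two-parameter solution structure on $\R^\pm$, which is precisely the computation reused in Appendix \ref{app:b} to derive the explicit Fourier representation of the resolvent. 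Two small points to make explicit in a final write-up: first, the ODE holds a priori distributionally on $\R$ (your observation $\E\subset L^1(\R)$ follows from Cauchy--Schwarz since $\omega^{-1}$ is integrable), and since $\hat f$ is analytic near the real axis it holds classically there and then on all of $\Omega_{\beta/2}$ by the identity theorem; second, the same identity theorem on the connected strip is what upgrades your local Taylor-series conclusion at $\xi=0$ to $\hat f=c\,\xi^k\mu(\xi)$ on the whole strip, and hence to $f=c'\mu_k$. Your explicit verification that $\mu_k\in D(\L)$ (via $D(\L)=\{f\in\E:\mathfrak Lf\in\E\}$ from Lemma \ref{abschluss_LL}) is a sound way to get the inclusion $-\N_0\subseteq\sigma(\L)$, which the paper instead reads off from its solution formula.
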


\begin{proof}
We consider the Fourier transform of the eigenvalue equation $(\zeta-\mathfrak L)f=0$ for $f\in \E$. The general solution of the Fourier-transformed equation on the real line reads:
\begin{equation}\label{branchis}
 \hat f(\xi)=C_\pm\mu(\xi)\xi^{-\zeta},\quad \xi\in\R^\pm.
\end{equation}
For details see the computation in the beginning of the Appendix \ref{app:b} for $g=\theta=0$. Since $f\in\E$, $\hat f$ has to be analytic in $\Omega_{\beta/2}$, see Lemma \ref{analyticity}. With the specification of the complex logarithm in Section \ref{sec2} we may extend both parts of $\hat f$ from \eqref{branchis} analytically to the complex half-planes $\{\Re\xi>0\}$ and $\{\Re\xi<0\}$ respectively. However, if $\zeta\in\C\setminus\Z$, the two extensions do not meet continuously at the imaginary axis, thus $\hat f$ is not analytic in $\Omega_{\beta/2}$ (except for the trivial case $C_\pm=0$). If $\zeta\in\Z$, we obtain continuity of $\hat f$ at the imaginary axis (without $\xi=0$) iff $C_-=C_+$. But for $\zeta\in\N$, $\hat f$ still has a pole at $\xi=0$, thus it is not analytic. In the remaining case $\zeta\in -\N_0$ the function $\hat f$ from \eqref{branchis} has an analytic extension to $\C$, when we choose $C_-=C_+$. So $f\in\E$ solves the eigenvalue equation for $\zeta$ iff $\zeta\in-\N_0$. And 
according to \eqref{branchis} the eigenspaces are still spanned by the $\mu_k,\,k\in\N_0$, since $\hat\mu_k(\xi)=(\ii\xi)^k\mu(\xi)$.
\end{proof}

The main difference to $L$ in $E$ is that the eigenfunctions do not form an orthogonal basis any more. However, we are still able to transfer the concept of the $L$-invariant subspaces $E_k\subset E$ to $\E$.

\begin{samepage}
\begin{prop}\label{spec_pr}
For every $k\in\N$ we have the following facts:
\begin{enumerate}\renewcommand{\theenumi}{\roman{enumi}}
\renewcommand{\labelenumi}{(\theenumi)}
 \item\label{ho:i} The subspace $\E_k:=\cl_\E E_k$ is $\L$-invariant, and $\sigma(\L|_{\E_k})=\{-k,-k-1,\ldots\}$
 \item\label{ho:ii} The spectral projection $\Pi_{\L,k}$ of $\L$ associated to the eigenvalue $-k$ satisfies
\[
 \ker\Pi_{\L,k}=\E_{k+1}\oplus\spn\{\mu_{k-1},\ldots,\mu_0\},\quad\ran\Pi_{\L,k}= \spn\{\mu_k\}.
\]
Moreover, $\ker\Pi_{\L,0}=\E_1$ and $\ran\Pi_{\L,0}=\spn\{\mu_0\}$.
 \item\label{ho:iii} There holds $\E=\E_k\oplus \spn\{\mu_{k-1},\ldots,\mu_0\}$.
\end{enumerate}
\end{prop}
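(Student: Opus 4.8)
The plan is to exploit the continuous dense embedding $E\inj\E$ (which holds since $\cosh\beta x\le C\mu(x)^{-1}$) together with the spectral projections $\Pi_{\L,k}$, whose existence is guaranteed by the compactness of $R_\L(\zeta)$ from Theorem \ref{trm:compactness}. The first step is a compatibility lemma for the resolvents: for $\zeta\in\rho(\L)=\C\setminus(-\N_0)$ and $f\in E$ the function $u:=R_L(\zeta)f\in D(L)\subset E\subset\E$ satisfies $\mathfrak Lu=Lu=\zeta u-f\in\E$, so $u\in D(\L)$ and $(\zeta-\L)u=f$; hence $R_\L(\zeta)f=R_L(\zeta)f$. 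Integrating this identity over a small contour around $-k$ (cf.~\eqref{def:spec_proj}) gives $\Pi_{\L,k}f=\Pi_{L,k}f$ for all $f\in E$. In particular $\mu_k\in\ran\Pi_{\L,k}$, and $E_k\subseteq\ker(\Pi_{\L,0}+\cdots+\Pi_{\L,k-1})$.

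The second, and technically central, step is to show $\ran\Pi_{\L,k}=\spn\{\mu_k\}$, i.e.~that $-k$ carries no generalized eigenvectors in $\E$. Since $R_\L$ is compact, $\ran\Pi_{\L,k}$ is finite-dimensional, so it suffices to exclude a Jordan chain of length two, i.e.~to show that $(\L+k)g=\mu_k$ has no solution $g\in D(\L)$. I would do this on the Fourier side, extending the computation of Proposition \ref{prop_spec}: the equation transforms into the first-order ODE $\xi\hat g'+(\xi^2-k)\hat g=\hat\mu_k$, and variation of parameters (with integrating factor $\mu(\xi)^{-1}\xi^{-k}$, as in Appendix \ref{app:b}) forces $\hat g$ to contain a term proportional to $\xi^k\mu(\xi)\ln\xi$ with nonzero coefficient. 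Because of the branch point of $\ln$ at the origin, this term cannot be absorbed into a function analytic on $\Omega_{\beta/2}$; by Lemma \ref{analyticity} this contradicts $g\in\E$. Hence $\ran\Pi_{\L,k}=\ker(\L+k)=\spn\{\mu_k\}$, and the $\Pi_{\L,j}$ are mutually orthogonal rank-one spectral projections ($\Pi_{\L,i}\Pi_{\L,j}=0$ for $i\ne j$).

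With these two facts the rest is soft functional analysis. Let $Q_k:=\Pi_{\L,0}+\cdots+\Pi_{\L,k-1}$, the spectral projection for the isolated set $\sigma'=\{0,-1,\ldots,-(k-1)\}$ (take a single contour enclosing exactly these points in \eqref{def:spec_proj}). Then $Q_k$ is bounded and reduces $\L$, with $\ran Q_k=\spn\{\mu_0,\ldots,\mu_{k-1}\}$ and, by the reduction property, $\ker Q_k$ is $\L$-invariant and $\sigma(\L|_{\ker Q_k})=\sigma(\L)\setminus\sigma'=\{-k,-k-1,\ldots\}$. From Step~1, $E_k\subseteq\ker Q_k$, hence $\E_k=\cl_\E E_k\subseteq\ker Q_k$. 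For equality I would show $\E=\E_k+\spn\{\mu_0,\ldots,\mu_{k-1}\}$: starting from the orthogonal decomposition $E=E_k\oplus\spn\{\mu_0,\ldots,\mu_{k-1}\}$ (from Theorem \ref{prop:fp_in_H}), taking $\E$-closures, using density of $E$ in $\E$, and the standard fact that the sum of a closed subspace and a finite-dimensional subspace $F$ is closed, so that $\cl_\E(E_k+F)=\cl_\E E_k+F$. Combining $\E=\ran Q_k\oplus\ker Q_k$ with $\E_k\subseteq\ker Q_k$ and $\E=\E_k+\ran Q_k$ forces $\ker Q_k=\E_k$ and makes the sum direct; this yields (iii) and the $\L$-invariance and spectrum in (i). Finally (ii) follows by applying (iii) at level $k+1$, writing $\E=\spn\{\mu_k\}\oplus\big(\E_{k+1}\oplus\spn\{\mu_0,\ldots,\mu_{k-1}\}\big)$, observing that the bracketed summand lies in $\ker\Pi_{\L,k}$ (by orthogonality of the $\Pi_{\L,j}$) while $\spn\{\mu_k\}=\ran\Pi_{\L,k}$, and invoking uniqueness of the range/kernel splitting of $\Pi_{\L,k}$; the case $k=0$ gives the ``Moreover'' statement.

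I expect the main obstacle to be Step~2, the exclusion of generalized eigenvectors: it requires carrying out the variation-of-parameters computation carefully and correctly identifying the logarithmic obstruction at $\xi=0$, including the behaviour on both half-lines $\xi\gtrless0$ under the chosen branch of the logarithm. The remaining steps are routine, the only mild subtlety being the ``closed plus finite-dimensional is closed'' argument used to transport the decomposition from $E$ to $\E$.
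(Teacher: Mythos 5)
Your proposal is correct, but it takes a genuinely different route from the paper at the central step. The paper disposes of the entire proposition with one abstract tool: from $R_L(\zeta)\subset R_\L(\zeta)$ and the contour integral \eqref{def:spec_proj} it gets $\Pi_{L,\sigma'}\subset\Pi_{\L,\sigma'}$ (your Step 1), and then Lemma \ref{lem:proj} on compatible projections for $E\inj\E$ yields \emph{both} identities $\ran\Pi_{\L,\sigma'}=\cl_\E\ran\Pi_{L,\sigma'}$ and $\ker\Pi_{\L,\sigma'}=\cl_\E\ker\Pi_{L,\sigma'}$ simultaneously, applied once with $\sigma'=\{0,\ldots,-k+1\}$ and once with $\sigma'=\{-k\}$. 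In particular your Step 2 --- the exclusion of generalized eigenvectors of $\L$ at $-k$ --- is obtained for free in the paper: $\ran\Pi_{\L,k}=\cl_\E\ran\Pi_{L,k}=\spn\{\mu_k\}$ is inherited from the self-adjoint operator $L$ in $E$, with no ODE computation and no appeal to finite rank of the projection. Your replacement is nevertheless sound: the Fourier-transformed chain equation is $\xi\hat g'+(\xi^2-k)\hat g=-\hat\mu_k$ (your sign is off, which is irrelevant), variation of parameters with integrating factor $\mu(\xi)^{-1}\xi^{-k}$ gives $\hat g(\xi)=\big(C_\pm-\ii^k\ln\xi\big)\xi^k\mu(\xi)$ on the two half-planes, and the branch point of $\ln$ at $0$ is incompatible with the analyticity on $\Omega_{\beta/2}$ required by Lemma \ref{analyticity}; this is the same monodromy argument the paper itself uses in Proposition \ref{prop_spec} and Appendix \ref{app:b}, pushed one level up the Jordan chain (and reducing to a chain of length two is legitimate, since $\ker(\L+k)$ is one-dimensional by Proposition \ref{prop_spec}, so the nilpotent part on $\ran\Pi_{\L,k}$ would be a single Jordan block). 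Likewise, your recovery of the kernel --- transporting $E=E_k\oplus\spn\{\mu_0,\ldots,\mu_{k-1}\}$ to $\E$ by closures, using that a closed subspace plus a finite-dimensional one is closed, and then observing that a complement of $\ran Q_k$ contained in $\ker Q_k$ must equal $\ker Q_k$ --- is exactly the algebra of Lemma \ref{uniqueness}, so nothing is missing. In short: the paper's argument is shorter and transfers kernel and range in one stroke for arbitrary isolated spectral sets, while yours is more self-contained (it never needs Lemma \ref{lem:proj}) at the price of an explicit computation whose difficulty you correctly identified as the crux.
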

\end{samepage}

\begin{proof}
Since $\sigma(L)=\sigma(\L)$, and $R_L(\zeta)\subset R_\L(\zeta)$ for all $\zeta\in\C\setminus(-\N_0)$, we conclude from \eqref{def:spec_proj} that for any $\sigma'\subset\sigma(\L)$ there holds $\Pi_{L,\sigma'}\subset \Pi_{\L,\sigma'}$, and they are bounded projections in $E$ and $\E$, respectively. For $\sigma':=\{0,\ldots,-k+1\},\, k\in\N,$ we apply Lemma \ref{lem:proj} from the appendix: $\ran\Pi_{\L,\sigma'}=\cl_\E\ran\Pi_{L,\sigma'}=\cl_\E\spn\{\mu_0,\ldots,\mu_{k-1}\}\allowbreak=\spn\{\mu_0,\ldots,\mu_{k-1}\}$ and $\ker\Pi_{\L,\sigma'}=\cl_\E\ker\Pi_{L,\sigma'}=\cl_\E E_k=:\E_k$. This shows \eqref{ho:i}. Since the projection $\Pi_{\L,\sigma'}$ is bounded, the range and kernel indeed represent a decomposition of $\E$, thus we also obtain Result \eqref{ho:iii}.

For \eqref{ho:ii} we use the same arguments as before, with $\sigma'=\{-k\}$ instead.
\end{proof}

Next we characterize the subspaces $\E_k$.

 \begin{prop}\label{char:e_k}
For $k\in-\N$ the subspace $\E_k$ is explicitly given by
\begin{equation}\label{e_k} \E_k=\left\{f\in\E:\int_\R f(x)x^j\d x=0,\, 0\le j\le k-1\right\}.\end{equation}
Furthermore, there holds
\begin{equation}\label{f_char_e_k}\E_k=\left\{f\in\E:\hat f^{(j)}(0)=0,\,0\le j\le k-1\right\},\end{equation}
where $\hat f^{(j)}$ denotes the $j$-th derivative of the Fourier transform of $f$.
\end{prop}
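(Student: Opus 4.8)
The plan is to show that both sets on the right-hand sides of \eqref{e_k} and \eqref{f_char_e_k} coincide with a single closed subspace $V_k\subset\E$ of codimension $k$, and then to identify $V_k$ with $\E_k$ by a dimension count rather than by an explicit approximation argument. Fix $k\in\N$. First I would record the underlying Fourier identity. For $f\in\E$ the product $x^jf=(x^j\e^{-\beta|x|/2})(\e^{\beta|x|/2}f)$ lies in $L^1(\R)$ by Cauchy--Schwarz, since the first factor is in $L^2(\R)$ and the second has $L^2$-norm controlled by $\|f\|_\omega$; moreover Lemma \ref{analyticity} guarantees that $\hat f$ is analytic near $0$. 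Differentiating under the integral sign then yields
\[
\hat f^{(j)}(0)=(-\ii)^j\int_\R x^jf(x)\d x,\qquad 0\le j\le k-1,
\]
so the moment conditions in \eqref{e_k} and the derivative conditions in \eqref{f_char_e_k} are equivalent. I denote the resulting common set by $V_k$.

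Next I would show that $V_k$ is an $\E$-closed subspace of codimension exactly $k$. The same Cauchy--Schwarz bound shows that each functional $\ell_j(f):=\int_\R x^jf\d x$ is bounded on $\E$, whence $V_k=\bigcap_{0\le j\le k-1}\ker\ell_j$ is closed. To compute its codimension it suffices to verify that $\ell_0,\dots,\ell_{k-1}$ are linearly independent, for which I would evaluate them against $\mu_0,\dots,\mu_{k-1}$. Using $\hat\mu_j(\xi)=(\ii\xi)^j\mu(\xi)$ (from Proposition \ref{prop_spec}), the function $\hat\mu_j$ vanishes to order exactly $j$ at the origin, so $\ell_i(\mu_j)=0$ for $i<j$ while $\ell_j(\mu_j)=(-1)^j j!\neq 0$. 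Hence the matrix $(\ell_i(\mu_j))_{i,j=0}^{k-1}$ is triangular with nonzero diagonal, the $\ell_j$ are independent, and $V_k$ has codimension $k$.

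It then remains to identify $V_k$ with $\E_k=\cl_\E E_k$. Since the $\mu_j$ are orthogonal in $E$ (Theorem \ref{prop:fp_in_H}), the sum $\Pi_{L,0}+\dots+\Pi_{L,k-1}$ is the orthogonal projection onto $\spn\{\mu_0,\dots,\mu_{k-1}\}$, so $f\in E_k$ iff $\la f,\mu_j\ra_E=0$ for $0\le j\le k-1$. With $\mu_j=\tfrac1{\sqrt{2\pi}}H_j\mu$ this reads $\int_\R fH_j\d x=0$, and because $\{H_0,\dots,H_{k-1}\}$ spans the polynomials of degree $<k$, it is equivalent to $\int_\R x^jf\d x=0$ for $j<k$. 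Thus $E_k\subset V_k$, using the embedding $E\inj\E$ which holds since $\cosh\beta x\le C\e^{x^2/2}$. As $V_k$ is $\E$-closed, this gives $\E_k=\cl_\E E_k\subseteq V_k$. Finally, Proposition \ref{spec_pr}\,(\ref{ho:iii}) yields $\E=\E_k\oplus\spn\{\mu_0,\dots,\mu_{k-1}\}$, so $\E_k$ also has codimension $k$; since $\E_k\subseteq V_k$ and both have codimension $k$ in $\E$, they coincide, completing the proof.

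I expect the only genuinely nontrivial point to be the codimension count for $V_k$, that is, the linear independence of the moment functionals. The triangular structure of $(\ell_i(\mu_j))$ coming from $\hat\mu_j(\xi)=(\ii\xi)^j\mu(\xi)$ is what makes this transparent, and it is precisely this count that lets me upgrade the inclusion $\E_k\subseteq V_k$ to equality without constructing an explicit $\E$-dense family inside $E_k$.
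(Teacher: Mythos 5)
Your proof is correct, and it takes a genuinely different route for the key identification step. The first half coincides with the paper's: the equivalence of the moment conditions with the derivative conditions via $\int_\R x^jf\,\mathrm{d}x=\ii^j\hat f^{(j)}(0)$ (your Cauchy--Schwarz justification of $x^jf\in L^1$ is the right way to make the functionals bounded), and the identification $E_k=\{\mu_0,\ldots,\mu_{k-1}\}^{\perp_E}$ through the Hermite polynomials, giving $E_k\subset V_k$ and hence $\E_k\subseteq V_k$. The divergence is in how the reverse inclusion is obtained. The paper applies its appendix Lemma \ref{lem:funct} with $X=E$, $\X=\E$: via Riesz representation and Gram--Schmidt it builds a bounded projection on $\E$ extending the orthogonal projection onto $\spn\{\mu_0,\ldots,\mu_{k-1}\}$ in $E$, and then Lemma \ref{lem:proj} yields $\cl_\E\bigcap_j\ker\tilde\psi_j=\bigcap_j\ker\psi_j$ in one stroke. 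You instead do a codimension sandwich: the triangular matrix $(\ell_i(\mu_j))$ with $\ell_j(\mu_j)=(-1)^jj!$ (which checks out, either from $\hat\mu_j(\xi)=(\ii\xi)^j\mu(\xi)$ or by integrating $x^j\mu^{(j)}$ by parts) shows the moment functionals are independent, so $V_k$ is closed of codimension exactly $k$; Proposition \ref{spec_pr}\,(iii) gives $\E=\E_k\oplus\spn\{\mu_0,\ldots,\mu_{k-1}\}$, so $\E_k$ also has codimension $k$; and inclusion plus equal finite codimension forces equality. This is legitimate and non-circular, since Proposition \ref{spec_pr} precedes the statement and its proof does not use it. What each approach buys: the paper's lemma is self-contained relative to the spectral decomposition of $\E$ and is reused verbatim (inductively) to obtain the $d$-dimensional characterization \eqref{e_k:d}; your argument trades that abstract density lemma for elementary linear algebra, makes explicit the independence of the moment functionals that the paper leaves tacit when invoking Lemma \ref{lem:funct}, and also generalizes to $d>1$, since $\hat\mu_\kk(\bfxi)=(\ii\bfxi)^\kk\mu(\bfxi)$ preserves the triangular structure with respect to multi-indices and the complement in Theorem \ref{extension_theorem:d}\,(\ref{fp:res:ii:d}) supplies the matching codimension count. (Minor point, not a gap: the statement's ``$k\in-\N$'' is evidently a typo for $k\in\N$, and you correctly work with the latter.)
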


\begin{proof}
The functionals $\psi_j:f\mapsto\int_\R f(x)x^j\d x,\,j\in\N$, are continuous in $\E$. We define $\tilde \psi_j:=\psi_j|_{E}$. Let $f\in E_k=\{\mu_0,\ldots,\mu_{k-1}\}^{\perp_E}$. The orthogonality condition then reads 
\[0=\la f,\mu_j\ra_E=\int_\R f(x)\mu_j(x)\mu(x)^{-1}\d x=\frac 1{\sqrt{2\pi}}\int_\R f(x)H_j(x)\d x,\quad\forall 0\le j\le k-1,\]
which is equivalent to $\tilde\psi_0(f)=\ldots=\tilde\psi_{k-1}(f)=0$. Applying Lemma \ref{lem:funct} from the appendix with $\X=\E$ and $X=E$ yields $\cl_\E E_k=\{f\in\E:\psi_j(f)=0,\,0\le j\le k-1\}$, which is equal to $\E_k$ by definition. This proves (\ref{e_k}).

The second equality (\ref{f_char_e_k}) immediately follows from
\[\int_\R f(x)x^j\d x=\F_{x\to\xi}[f(x)x^j](0)=\ii^j\hat f^{(j)}(0),\quad\forall j\in\N_0.\]
\end{proof}

\begin{rem}
 The representation (\ref{e_k}) of the $\E_k$ also holds in polynomially weighted spaces, which is shown in \cite[Appendix A]{Gallay2002}.
\end{rem}

The final result of this section deals with the analysis of the semigroup $(\e^{t\L})_{t\ge 0}$ generated by $\L$ in $\E$. We already know that $L$ generates a $C_0$-semigroup $(\e^{tL})_{t\ge 0}$ of bounded operators in $E$, and from \cite[Appendix A]{Gallay2002} we get its representation (for $f\in E$):
\begin{equation}\label{expl_semi}
 \F_{x\to\xi}\big[\e^{tL}f\big]=\exp\Big(-\frac{\xi^2}2(1-\e^{-2t})\Big)\hat f\big(\xi\e^{-t}\big),\quad t\ge0.
\end{equation}
This formula can be extended to $f\in \E$, yielding a family $(S(t))_{t\ge 0}$ of operators in $\E$.

\begin{lem}\label{cx_0}
 The family of operators $(S(t))_{t\ge0}$ given by \eqref{expl_semi} is a family of bounded operators in $\E$.
\end{lem}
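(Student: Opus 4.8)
The plan is to carry out the whole argument on the Fourier side, exploiting the characterization of $\E$ in Lemma \ref{analyticity} together with the equivalent norm $\nn\cdot\nn_\omega$ from \eqref{norm_w}. For fixed $f\in\E$ and $t\ge0$, let $\hat g(\xi):=\exp\big(-\tfrac{\xi^2}2(1-\e^{-2t})\big)\hat f(\xi\e^{-t})$ denote the right-hand side of \eqref{expl_semi}, so that $g=S(t)f$ is the function with $\F_{x\to\xi}g=\hat g$. The first step is to verify that $\hat g$ admits an analytic continuation to $\Omega_{\beta/2}$: the Gaussian prefactor is entire, while $\xi\mapsto\hat f(\xi\e^{-t})$ is analytic precisely where $|\Im(\xi\e^{-t})|<\beta/2$, i.e.~on the strip $\Omega_{\beta\e^t/2}$. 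Since $t\ge0$ this strip \emph{contains} $\Omega_{\beta/2}$, so $\hat g$ is analytic there, and by Lemma \ref{analyticity}(\ref{analyt:i}) it remains only to bound the $L^2$-norms of $\hat g$ along the horizontal lines $\Im\xi=b$.

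The second step is to estimate $\|\hat g(\cdot+\ii b)\|_{L^2(\R)}$ for $|b|\le\beta/2$. On the line $\Im\xi=b$ the modulus of the Gaussian factor equals $\exp\big(-\tfrac12(s^2-b^2)(1-\e^{-2t})\big)$, and $\hat f$ is evaluated at $s\e^{-t}+\ii b\e^{-t}$, whose imaginary part satisfies $|b\e^{-t}|\le\beta/2$. Substituting $u=s\e^{-t}$ and using the crucial identity $\e^{2t}(1-\e^{-2t})=\e^{2t}-1\ge0$, the Gaussian becomes $\e^{b^2(1-\e^{-2t})}\,\e^{-(\e^{2t}-1)u^2}$; bounding $\e^{-(\e^{2t}-1)u^2}\le1$ then gives
\[
\|\hat g(\cdot+\ii b)\|_{L^2(\R)}^2\le \e^{t}\,\e^{b^2(1-\e^{-2t})}\,\|\hat f(\cdot+\ii b\e^{-t})\|_{L^2(\R)}^2 .
\]

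The third step is to control the interior line norm of $\hat f$ by $\nn f\nn_\omega$. By Lemma \ref{analyticity}(\ref{analyt:ii}) one has $\hat f(\cdot+\ii b')=\F_{x\to\xi}(\e^{b'x}f)$, so Plancherel yields $\|\hat f(\cdot+\ii b')\|_{L^2(\R)}^2=2\pi\|\e^{b'x}f\|_{L^2(\R)}^2$; and for $|b'|\le\beta/2$ the pointwise bound $\e^{2b'x}\le\e^{\beta|x|}\le2\cosh\beta x$ gives $\|\hat f(\cdot+\ii b')\|_{L^2(\R)}^2\le4\pi\|f\|_\omega^2=\nn f\nn_\omega^2$. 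Inserting this (with $b'=b\e^{-t}$) into the estimate above shows $\sup_{|b|<\beta/2}\|\hat g(\cdot+\ii b)\|_{L^2(\R)}<\infty$, so $g\in\E$ by Lemma \ref{analyticity}(\ref{analyt:i}); evaluating at $b=\pm\beta/2$ and summing the two contributions then produces $\nn S(t)f\nn_\omega^2\le C(t)\nn f\nn_\omega^2$ with $C(t)=2\,\e^{t}\e^{\beta^2(1-\e^{-2t})/4}$, whence $S(t)\in\BB(\E)$.

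The only genuinely delicate point is reconciling two competing effects: the dilation $\xi\mapsto\xi\e^{-t}$ narrows the strip on which $\hat f$ is sampled (harmless, since we need only $\Omega_{\beta/2}$), while the Gaussian, after the change of variables $u=s\e^{-t}$, could in principle reintroduce growth in the integration variable. The computation closes precisely because $\e^{2t}(1-\e^{-2t})=\e^{2t}-1\ge0$ keeps the transformed Gaussian bounded by $1$ for all $t\ge0$, so no growth is produced; everything else is a routine change of variables together with Plancherel and the pointwise weight estimate $\e^{2b'x}\le2\cosh\beta x$.
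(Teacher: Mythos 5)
Your proof is correct and follows essentially the same route as the paper: an $L^2$-estimate of $\F[S(t)f]$ along horizontal lines in $\Omega_{\beta/2}$, bounding the Gaussian factor by a constant of size $\e^{\beta^2/4}$ (you drop the decaying factor $\e^{-(\e^{2t}-1)u^2}\le 1$ after the substitution $u=s\e^{-t}$, the paper drops $\e^{-\xi^2(1-\e^{-2t})}\le 1$ before it --- the same estimate in a different order), picking up the factor $\e^t$ from the change of variables, and controlling the interior line norm $\|\hat f(\cdot+\ii b\e^{-t})\|_{L^2}$ by $\nn f\nn_\omega$ exactly as the paper does via $\nn f\nn_{\cosh(\e^{-t}\beta x)}\le\nn f\nn_\omega$. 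Your explicit verification that $\hat g$ is analytic on $\Omega_{\beta/2}$ (since $\xi\mapsto\hat f(\xi\e^{-t})$ is analytic on the wider strip $\Omega_{\beta\e^t/2}$), combined with Lemma \ref{analyticity}(\ref{analyt:i}) to conclude $S(t)f\in\E$, is a small point the paper leaves implicit, but it does not change the substance of the argument.
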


\begin{proof}
In order to show that the operators $S(t)$ are bounded, we use the norm $\nn\cdot\nn_\omega$. So we estimate $\|\F[ S(t)f](\xi+\ii\beta/2)\|$, the estimate for the other term in $\nn\cdot\nn_\omega$ is analogous:
\begin{align}
 \big\|\F[ S(t)f](\cdot+\ii\beta/2)\big\|^2_{L^2(\R)} \!& =\! \int_\R \!\!\exp\Big(\Big[-\xi^2+\frac{\beta^2}4\Big](1-\e^{-2t})\Big)\Big|\hat f\Big(\Big[\xi+\ii\frac\beta 2\Big]\e^{-t}\Big)\Big|^2\d\xi\label{zw_nn}\\
&\le \exp\Big(\frac{\beta^2}4\Big)\int_\R \Big|\hat f\Big(\Big[\xi+\ii\frac\beta 2\Big]\e^{-t}\Big)\Big|^2\d\xi\nonumber\\
&=\exp\Big(\frac{\beta^2}4+t\Big)\int_\R \Big|\hat f\Big(\xi+\ii\e^{-t}\frac\beta 2\Big)\Big|^2\d\xi\nonumber\\
&\le \exp\Big(\frac{\beta^2}4+t\Big)\nn f\nn_{\cosh(\e^{-t}\beta x)}^2    \le  \exp\Big(\frac{\beta^2}4+t\Big)\nn f\nn^2_\omega\nonumber
\end{align}
So $( S(t))_{t\ge0}$ is a family of bounded operators in $\E$, and there exists a constant $M>0$ with
\[
\| S(t)\|_{\BB(\E)}\le M\e^{t/2},\quad t\ge0.
\]
\end{proof}

\begin{lem}
 The operator $\L$ is the infinitesimal generator of the $C_0$-semigroup $(S(t))_{t\ge0}$ in $\E$.
\end{lem}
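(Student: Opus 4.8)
The plan is to first verify that $(S(t))_{t\ge 0}$ is a $C_0$-semigroup and then to identify its generator with $\L$. I would begin with the algebraic semigroup property $S(t)S(s)=S(t+s)$ (and $S(0)=\id$), which can be read off directly from the explicit Fourier representation \eqref{expl_semi}: applying $\F[S(s)\,\cdot\,]$ to $S(t)f$ multiplies the two Gaussian factors and composes the two dilations, and collecting the exponent $\frac{\xi^2}2\big[(1-\e^{-2s})+\e^{-2s}(1-\e^{-2t})\big]=\frac{\xi^2}2(1-\e^{-2(s+t)})$ reproduces exactly the formula for $S(s+t)$. This is a short, purely computational check.

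Next I would establish strong continuity at $t=0$. Lemma \ref{cx_0} already provides the local uniform bound $\|S(t)\|_{\BB(\E)}\le M\e^{t/2}$, so by the usual density argument it suffices to show $S(t)f\to f$ in $\E$ as $t\to 0^+$ for $f$ in the dense subspace $C_0^\infty(\R)\subset\E$. For such $f$ the formula \eqref{expl_semi} coincides with the $E$-semigroup, i.e.~$S(t)f=\e^{tL}f$, since $C_0^\infty(\R)\subset E$ and \eqref{expl_semi} is precisely the representation of $\e^{tL}$ on $E$. As $(\e^{tL})_{t\ge 0}$ is a $C_0$-semigroup on $E$ by Theorem \ref{prop:fp_in_H}~(\ref{st_fp:5}), and $E\inj\E$ continuously (because $\cosh\beta x\le C\e^{x^2/2}$), we get $\e^{tL}f\to f$ in $\E$. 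Together with the uniform bound this yields strong continuity on all of $\E$, so $(S(t))_{t\ge 0}$ is a $C_0$-semigroup; denote its generator by $A$.

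I would then prove $\L\subseteq A$ and $A\subseteq\L$ separately. For the first inclusion, take $f\in C_0^\infty(\R)\subset D(L)$; the difference quotient satisfies $t^{-1}(S(t)f-f)=t^{-1}(\e^{tL}f-f)\to Lf$ in $E$, hence in $\E$, so $f\in D(A)$ with $Af=Lf=\L f$. Thus $A$ is a closed extension of $L|_{C_0^\infty(\R)}$, and since $\L=\cl_\E L|_{C_0^\infty(\R)}$ by Lemma \ref{abschluss_LL}, we conclude $\L\subseteq A$. For the reverse inclusion I would fix $\zeta$ with $\Re\zeta\ge 1+\beta^2/2$: such $\zeta$ lies in $\rho(\L)$ by Corollary \ref{wob}, and it also lies in $\rho(A)$ because the bound $\|S(t)\|_{\BB(\E)}\le M\e^{t/2}$ puts the half-plane $\{\Re\zeta>\frac12\}$ into $\rho(A)$. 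Now $\zeta-A$ extends the bijection $\zeta-\L\colon D(\L)\to\E$ and is itself a bijection of $D(A)$ onto $\E$; injectivity of $\zeta-A$ then forces $D(A)=D(\L)$, whence $A=\L$.

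The only genuinely delicate point I expect is the transfer of both the strong continuity and the difference-quotient limit from $E$ to $\E$: one must make sure that $S(t)$ really restricts to $\e^{tL}$ on the dense subspace, so that the known $C_0$-behaviour in $E$ can be imported through the continuous embedding $E\inj\E$, rather than attempting to prove strong continuity directly from \eqref{expl_semi}, where a dominated-convergence argument in the weighted norm $\nn\cdot\nn_\omega$ (e.g.~controlling the integrand in \eqref{zw_nn} as $t\to0^+$) would be considerably more cumbersome. Everything else is routine semigroup bookkeeping.
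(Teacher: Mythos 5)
Your proof is correct, but it takes a genuinely different route from the paper. The paper argues ``generator first'': it invokes Corollary \ref{cor_dissip} together with \cite[Theorem 1.4.5]{pazy} to get dissipativity of $\L-1-\beta^2/2$, combines this with the resolvent information from Proposition \ref{prop_spec}, and applies the Lumer--Phillips theorem to conclude that $\L$ generates \emph{some} $C_0$-semigroup $(\e^{t\L})_{t\ge0}$; it then identifies $\e^{t\L}=S(t)$ in one line, since both are bounded and coincide on the dense subspace $D(L)\subset\E$. You instead argue ``semigroup first'': you verify the semigroup law directly from the Fourier representation \eqref{expl_semi}, obtain strong continuity by importing the $C_0$-property from $E$ through the continuous embedding $E\inj\E$ (using the local bound of Lemma \ref{cx_0} and density of $C_0^\infty$), and then pin down the generator $A$ via the two-sided argument $\L\subseteq A$ (closedness of $A$ plus $\L=\cl_\E L|_{C_0^\infty}$) and $A=\L$ (bijectivity of $\zeta-\L$ and $\zeta-A$ for a common $\zeta$ with $\Re\zeta\ge 1+\beta^2/2$, using Corollary \ref{wob} and the growth bound). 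Both routes are sound. The paper's is shorter given that dissipativity and the spectral analysis are already in place, and it gets the semigroup law and strong continuity of $S(t)$ for free once the identification is made; its weak point is that the coincidence of $\e^{t\L}$ and $S(t)$ on $D(L)$ is asserted without spelling out the uniqueness-of-Cauchy-solutions argument behind it. Your route avoids Lumer--Phillips and dissipativity entirely, costs you the (easy) algebraic check of the semigroup law, and in exchange makes the identification step fully rigorous through the resolvent comparison --- arguably the more airtight version of the one point the paper leaves implicit.
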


\begin{proof}
According to \cite[Theorem 1.4.5]{pazy}, Corollary \ref{cor_dissip} implies that $\L-1-\beta^2/2=\cl_\E(L|_{C_0^\infty}-1-\beta^2/2)$ is dissipative in $\E$. From Proposition \ref{prop_spec} we also know that any $\zeta\in\C$ with $\Re\zeta>0$ lies in $\rho(\L)$. So we can apply the Lumer-Phillips Theorem \cite[Theorem 1.4.3]{pazy} and find that $\L$ generates a $C_0$-semigroup $(\e^{t\L})_{t\ge 0}$ of bounded operators. Since $\e^{t\L}$ and $S(t)$ are both bounded in $\E$ and coincide on the dense subspace $D(L)\subset\E$, we get $\e^{t\L}=S(t)$ in $\E$ for all $t\ge 0$.
\end{proof}

As a consequence we write $\e^{t\L}:=S(t)$ for the semigroup generated by $\L$, and the representation \eqref{expl_semi} holds for all $f\in\E$.

\begin{prop}\label{unpert_decay}
For every $k\in\N_0$ we have:
\begin{enumerate}
\renewcommand{\theenumi}{\roman{enumi}}
\renewcommand{\labelenumi}{(\theenumi)}
 \item\label{1i} The space $\E_k$ is invariant under the family  $(\e^{t\L})_{t\ge0}$.
 \item\label{2ii}  There exists some $C_k>0$ such that
\[
 \|\e^{t\L}|_{\E_k}\|_{\BB(\E_k)}\le C_k\e^{-kt},\quad t\ge 0.
\]
\end{enumerate}
\end{prop}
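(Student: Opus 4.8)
The plan is to treat the two assertions separately, using throughout the explicit representation \eqref{expl_semi} and the Fourier-side characterisation \eqref{f_char_e_k} of the invariant subspaces. Set $s:=\e^{-t}\in(0,1]$, so that $\F[\e^{t\L}f](\xi)=\e^{-\frac{\xi^2}2(1-s^2)}\hat f(\xi s)$, the right-hand side being the analytic continuation to $\Omega_{\beta/2}$. For part \eqref{1i} I would argue directly on the Fourier side: if $f\in\E_k$, then by \eqref{f_char_e_k} the function $\hat f$ has a zero of order at least $k$ at the origin; since $\xi\mapsto\xi s$ is a linear isomorphism fixing $0$ and the Gaussian factor is analytic and non-vanishing at $\xi=0$, the product $\F[\e^{t\L}f]$ again has a zero of order at least $k$ there. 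As $\e^{t\L}f\in\E$ by Lemma \ref{cx_0}, a second application of \eqref{f_char_e_k} gives $\e^{t\L}f\in\E_k$, so $\E_k$ is invariant. (Alternatively, invariance is immediate from Proposition \ref{spec_pr}, since $\E_k=\ker\Pi_{\L,\sigma'}$ with $\sigma'=\{0,-1,\dots,-(k-1)\}$ is a spectral subspace and $\e^{t\L}$ commutes with the spectral projection.)

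For part \eqref{2ii} I would split into $t\in[0,1]$ and $t\ge1$. On $[0,1]$ there is nothing to gain: Lemma \ref{cx_0} gives $\|\e^{t\L}\|_{\BB(\E)}\le M\e^{t/2}\le M\e^{1/2}$, and since $\e^{-kt}\ge\e^{-k}$ there, we get $\|\e^{t\L}|_{\E_k}\|_{\BB(\E_k)}\le M\e^{1/2+k}\e^{-kt}$. The substance is the regime $t\ge1$, i.e.~$s\le\e^{-1}$. Working with the equivalent norm $\nn\cdot\nn_\omega$ of \eqref{norm_w} and computing as in \eqref{zw_nn}, after the substitution $\eta=\xi s$ I obtain
\[
\big\|\F[\e^{t\L}f](\cdot+\ii\beta/2)\big\|_{L^2(\R)}^2=\frac{\e^{(\beta^2/4)(1-s^2)}}{s}\int_\R \e^{-\frac{\eta^2}{s^2}(1-s^2)}\big|\hat f(\eta+\ii(\beta/2)s)\big|^2\d\eta,
\]
and an analogous identity for the $-\ii\beta/2$ term. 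The decisive feature is that for small $s$ the Gaussian $\e^{-\eta^2(1-s^2)/s^2}$ concentrates at $\eta=0$, precisely where $\hat f$ vanishes to order $k$.

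To exploit this, write $\hat f(z)=z^k\hat g(z)$ with $\hat g:=\hat f(z)/z^k$; by \eqref{f_char_e_k} the singularity at $0$ is removable, so $\hat g$ is analytic on $\Omega_{\beta/2}$, and $|\hat f(\eta+\ii(\beta/2)s)|^2=(\eta^2+(\beta^2/4)s^2)^k|\hat g(\eta+\ii(\beta/2)s)|^2$. Splitting the integral at a fixed $|\eta|=\delta$, on $\{|\eta|\le\delta\}$ I bound $|\hat g|$ by its supremum over the compact set $K:=\{|\Re z|\le\delta,\ |\Im z|\le(\beta/2)\e^{-1}\}\subset\Omega_{\beta/2}$ and evaluate the Gaussian moments $\int_\R\e^{-a\eta^2}(\eta^2+(\beta^2/4)s^2)^k\d\eta$ with $a=(1-s^2)/s^2\asymp s^{-2}$, each term contributing $\mathcal O(s^{2k+1})$. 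On $\{|\eta|>\delta\}$ the Gaussian is bounded by $\e^{-a\delta^2}$, which decays faster than any power of $s$, while the remaining integral is controlled by $\|\hat f(\cdot+\ii(\beta/2)s)\|_{L^2}^2\le\nn f\nn_\omega^2$ (Lemma \ref{analyticity}). Altogether the integral is $\le C s^{2k+1}\nn f\nn_\omega^2$; the prefactor $1/s$ restores one power and $\e^{(\beta^2/4)(1-s^2)}\le\e^{\beta^2/4}$, giving $\nn\e^{t\L}f\nn_\omega^2\le C\,s^{2k}\nn f\nn_\omega^2=C\e^{-2kt}\nn f\nn_\omega^2$ for $t\ge1$. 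Combining with the $[0,1]$ bound yields \eqref{2ii}.

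The main obstacle is the one pointwise estimate that makes the concentration argument rigorous: controlling $\sup_K|\hat g|$ by $\nn f\nn_\omega$. I would handle it with a Cauchy/mean-value estimate, bounding the interior values of the analytic function $\hat g=\hat f/z^k$ on the compact set $K$ by an $L^2$-average of $\hat f$ over a slightly larger slab of the strip, and estimating that slab average by $\int_{-\beta/2}^{\beta/2}\|\hat f(\cdot+\ii b)\|_{L^2}^2\d b\le C\nn f\nn_\omega^2$ using the continuity and uniform $L^2$-boundedness of $b\mapsto\hat f(\cdot+\ii b)$ from Lemma \ref{analyticity}\,\eqref{analyt:iii}. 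Everything else reduces to bookkeeping with Gaussian integrals.
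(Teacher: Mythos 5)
Your proposal is correct, and while it shares the paper's overall skeleton, the decisive estimate is obtained by a genuinely different mechanism. Part \eqref{1i} is as in the paper (the parenthetical via Proposition \ref{spec_pr} is literally the paper's one-line argument; your Fourier-side variant with \eqref{f_char_e_k} and Lemma \ref{cx_0} is equally valid). For part \eqref{2ii} both proofs split $t\in[0,1]$ versus $t\ge1$, work with $\nn\cdot\nn_\omega$ and \eqref{zw_nn}, and factor $\hat f(z)=z^k\hat g(z)$ to extract $\e^{-2kt}$ from the $k$-fold zero; but the paper controls $\hat g$ by an $L^\infty$ bound along the \emph{entire} shifted line, writing $\hat g(\cdot+\ii b)$ as the Fourier transform of $\e^{bx}\F^{-1}\hat g$, using $\|\F h\|_{L^\infty}\le\|h\|_{L^1}$, a Cauchy--Schwarz step giving the uniformly bounded constant $\tilde C(t)$, and the Poincar\'e inequality \eqref{poincare} iterated $k$ times, after which the moment $\int_\R\e^{-\xi^2/2}|\xi+\ii\beta/2|^{2k}\d\xi$ closes the bound. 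You instead substitute $\eta=\xi s$, exploit the concentration of $\e^{-\eta^2(1-s^2)/s^2}$ at the origin, need $\sup|\hat g|$ only on the fixed compact rectangle $K$ (which indeed lies in $\Omega_{\beta/2}$ since $(\beta/2)\e^{-1}<\beta/2$), and kill the far region by $\e^{-a\delta^2}$ with $a\ge s^{-2}/2$ together with the uniform line bound $\|\hat f(\cdot+\ii b)\|_{L^2}\le\nn f\nn_\omega$; your bookkeeping is right, as each binomial term of $\int_\R\e^{-a\eta^2}(\eta^2+(\beta^2/4)s^2)^k\d\eta$ is $\mathcal O(s^{2k+1})$ and the prefactor $1/s$ restores exactly one power. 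Your route buys independence from the Poincar\'e inequality and from the $L^1$-Fourier trick, and it makes the source of the decay transparent (Gaussian concentration at the order-$k$ zero), at the price of the complex-analytic interior estimate $\sup_K|\hat g|\le C\nn f\nn_\omega$; that step is sound as sketched --- by the maximum principle on a slightly larger rectangle, whose boundary points satisfy $|z|\ge c>0$ so that $|\hat g|\le c^{-k}|\hat f|$ there, followed by the mean-value (subharmonicity) bound for $|\hat f|^2$ against the slab $L^2$-average, which Lemma \ref{analyticity}\,\eqref{analyt:iii} controls by $\nn f\nn_\omega^2$ --- whereas the paper's argument needs no compactness or subharmonicity at all.
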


\begin{proof}
The closed subspaces $\E_k$ are $\L$-invariant, so they are also invariant under $(\e^{t\L})_{t\ge0 }$.

In order to show \eqref{2ii}, we use the first line of \eqref{zw_nn} and make the additional assumption $t\ge1$:
\begin{align}
 \big\|\F[\e^{t\L}f](\xi+\ii\beta/2)\big\|^2_{L^2(\R_\xi)} & \le \e^{\frac{\beta^2}4} \int_\R \e^{-\frac{\xi^2}2}\Big|\Big[\xi+\ii\frac\beta 2\Big]\e^{-t}\Big|^{2k}\left|\frac{\hat f\Big(\Big[\xi+\ii\frac\beta 2\Big]\e^{-t}\Big)}{\Big(\Big[\xi+\ii\frac\beta 2\Big]\e^{-t}\Big)^k}\right|^2\d\xi\label{hello}
\end{align}
Here we used the inequality $\frac 12<1-\e^{-2t}<1$ for $t\ge1$. In the following we use the Poincar\'e inequality \eqref{poincare}:
\begin{align*}
 \left\|\frac{\hat f\Big(\Big[\xi+\ii\frac\beta 2\Big]\e^{-t}\Big)}{\Big(\Big[\xi+\ii\frac\beta 2\Big]\e^{-t}\Big)^k}\right\|_{L^\infty(\R_\xi)}&=\left\|\F_{x\to\xi}\left(\exp\Big(\frac\beta2\e^{-t}x\Big)\F^{-1}_{\xi\to x}\left[\frac{\hat f(\xi)}{\xi^k}\right]\right)\right\|_{L^\infty(\R_\xi)} \\
&\le\left\|\exp\Big(\frac\beta2\e^{-t}x\Big)\F^{-1}_{\xi\to x}\left[\frac{\hat f(\xi)}{\xi^k}\right]\right\|_{L^1(\R_x)}\\
&\le \tilde C(t)\left\|\F^{-1}_{\xi\to x}\left[\frac{\hat f(\xi)}{\xi^k}\right]\right\|_\omega\\
&\le  C(t)\left\|(\ii \pd_x)^k\F^{-1}_{\xi\to x}\left[\frac{\hat f(\xi)}{\xi^k}\right]\right\|_\omega =  C(t)\|f\|_\omega.
\end{align*}
Thereby, the constant $\tilde C(t)$ is given by 
\[\tilde C(t)=\int_\R \frac{\exp(\beta\e^{-t}x)}{\cosh\beta x}\d x,\]
which is uniformly bounded for $t\ge 1$. Inserting this result in \eqref{hello} yields for $t\ge1$
\begin{align*}
 \big\|\F[\e^{t\L}f](\xi+\ii\beta/2)\big\|^2_{L^2(\R_\xi)}&\le C\e^{\frac{\beta^2}4}\e^{-2kt}\|f\|^2_\omega  \int_\R \e^{-\frac{\xi^2}2}\Big|\xi+\ii\frac\beta 2\Big|^{2k}\d \xi\\
&= C \e^{-2kt}\|f\|^2_\omega.
\end{align*}
Thus there exists a constant $C>0$ such that $\nn \e^{t\L}f\nn_\omega\le C\e^{-kt}\nn f\nn_\omega$ for all $t\ge 1$. From Lemma \ref{cx_0} we also know that the semigroup is uniformly bounded for $t\in[0,1]$, so altogether we get the desired decay estimate for the semigroup in $\E_k$.
\end{proof}

Before we turn to the perturbed Fokker-Planck equation, we summarize our results so far:

\begin{trm}\label{extension_theorem}
Let $\omega(x):=\cosh\beta x$ for some $\beta>0$. Then the Fokker-Planck operator $L|_{C_0^\infty(\R)}$ is closable in $\E=L^2(\omega)$, and its closure $\L=\cl_\E L|_{C_0^\infty(\R)}$ has the following properties:
\begin{enumerate}
\renewcommand{\theenumi}{\roman{enumi}}
\renewcommand{\labelenumi}{(\theenumi)}
 \item\label{fp:res:i} The spectrum satisfies $\sigma(\L)=-\N_0$, and $\ker(\L+k)=\spn\{\mu_k\}$ for any $k\in\N_0$. The eigenfunctions satisfy the relation $\mu_k=\mu_0^{(k)}$, the $k$-th derivative of $\mu_0$.
 \item The resolvent $R_\L(\zeta)$ is compact in $\E$ for all $\zeta\notin -\N_0$.
 \item\label{fp:res:ii} For any $k\in\N_0$ the closed subspace $\E_k:=\cl_\E\spn\{\mu_k,\mu_{k+1},\ldots\}$ is an $\L$-invariant subspace of $\E$, and $\spn\{\mu_0,\ldots,\mu_{k-1}\}$ is a complement. In particular $\E_0=\E$.
\item\label{fp:res:iii} The spectral projection $\Pi_{\L,k}$ to the eigenvalue $-k\in-\N_0$ fulfills $\ran\Pi_{\L,k}=\spn\{\mu_k\}$ and $\ker\Pi_{\L,k}=\E_{k+1}\oplus \spn\{\mu_{k-1},\ldots,\mu_0\}$ for $k\in\N_0$.
 \item\label{fp:res:iv} For any $k\in\N_0$ the operator $\L$ generates a $C_0$-semigroup on $\E_k$, and there exists a
constant $C_k\ge 1$ such that we have the estimate
\[
 \left\|\e^{t\L}|_{\E_k}\right\|_{\BB(\E_k)}\le C_k \e^{-kt},\quad\forall t\ge 0.
\]
\end{enumerate}
\end{trm}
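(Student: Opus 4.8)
The plan is to assemble this theorem from the results established throughout the section, since it is essentially a summary: every assertion has already been proved, and the only steps requiring genuine (if routine) verification are the identity $\mu_k=\mu_0^{(k)}$ in part (\ref{fp:res:i}) and the consistency of the two descriptions of the subspaces $\E_k$.

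First, closability of $L|_{C_0^\infty(\R)}$ in $\E$, together with the characterization of its closure $\L$, is exactly Lemma \ref{abschluss_LL}. For part (\ref{fp:res:i}), the statements $\sigma(\L)=-\N_0$ and $\ker(\L+k)=\spn\{\mu_k\}$ are Proposition \ref{prop_spec}. The relation $\mu_k=\mu_0^{(k)}$ then follows by a direct computation: from Theorem \ref{prop:fp_in_H}(\ref{st_fp:3}) we have $\mu_k=\frac1{\sqrt{2\pi}}H_k\mu$ and $H_k\mu=\mu^{(k)}$, whence $\mu_k=\frac1{\sqrt{2\pi}}\mu^{(k)}=\mu_0^{(k)}$. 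Part (ii), the compactness of $R_\L(\zeta)$ for $\zeta\notin-\N_0$, is precisely Theorem \ref{trm:compactness}.

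For parts (\ref{fp:res:ii}) and (\ref{fp:res:iii}) I would first reconcile the definition $\E_k:=\cl_\E\spn\{\mu_k,\mu_{k+1},\ldots\}$ used here with the definition $\E_k=\cl_\E E_k$ of Proposition \ref{spec_pr}, where $E_k=\ker(\Pi_{L,0}+\cdots+\Pi_{L,k-1})$. Since the $(\mu_j)_{j\in\N_0}$ form an orthogonal basis of $E$ by Theorem \ref{prop:fp_in_H}(\ref{st_fp:4}), the space $E_k$ is the $E$-orthogonal complement of $\spn\{\mu_0,\ldots,\mu_{k-1}\}$, i.e.\ $E_k=\cl_E S$ for $S:=\spn\{\mu_k,\mu_{k+1},\ldots\}$. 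Because $E\inj\E$ continuously (indeed $\|\cdot\|_\omega\le C\|\cdot\|_E$), $E$-convergence implies $\E$-convergence, so $\cl_E S\subseteq\cl_\E S$; taking $\E$-closures and using $S\subseteq\cl_E S$ then gives $\cl_\E E_k=\cl_\E(\cl_E S)=\cl_\E S$, so the two descriptions of $\E_k$ agree. With this identification, the $\L$-invariance of $\E_k$, the spectrum $\sigma(\L|_{\E_k})=\{-k,-k-1,\ldots\}$, and the decomposition $\E=\E_k\oplus\spn\{\mu_{k-1},\ldots,\mu_0\}$ are exactly Proposition \ref{spec_pr}(\ref{ho:i}),(\ref{ho:iii}), yielding (\ref{fp:res:ii}); and the stated range and kernel of the spectral projection $\Pi_{\L,k}$ are Proposition \ref{spec_pr}(\ref{ho:ii}), yielding (\ref{fp:res:iii}). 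Finally, part (\ref{fp:res:iv}) — that $\L$ generates a $C_0$-semigroup on each $\E_k$ with $\|\e^{t\L}|_{\E_k}\|_{\BB(\E_k)}\le C_k\e^{-kt}$ — is precisely Proposition \ref{unpert_decay}, where the constant satisfies $C_k\ge1$ because the bound must already hold at $t=0$.

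The main obstacle here is organizational rather than analytic: the sole point needing a short argument is verifying that $\cl_\E\spn\{\mu_k,\mu_{k+1},\ldots\}$ coincides with $\cl_\E E_k$, which rests on the orthogonal-basis property in $E$ together with the continuous dense embedding $E\inj\E$. Every remaining claim is a direct quotation of the preceding lemmas and propositions.
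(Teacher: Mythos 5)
Your proposal is correct and follows exactly the route the paper intends: Theorem \ref{extension_theorem} is stated as a summary, with each item resting on Lemma \ref{abschluss_LL}, Proposition \ref{prop_spec}, Theorem \ref{trm:compactness}, Proposition \ref{spec_pr}, and Proposition \ref{unpert_decay}, precisely as you cite them. Your two supplementary verifications --- the computation $\mu_k=\frac{1}{\sqrt{2\pi}}H_k\mu=\frac{1}{\sqrt{2\pi}}\mu^{(k)}=\mu_0^{(k)}$ and the identification $\cl_\E E_k=\cl_\E\spn\{\mu_k,\mu_{k+1},\ldots\}$ via the orthogonal basis in $E$ and the continuous embedding $E\inj\E$ --- are both sound and fill in exactly the routine details the paper leaves implicit.
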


\begin{rem}
More generally, the results of Theorem \ref{extension_theorem} hold for all weight functions $\omega(x)=\exp(\beta|x|^\gamma)$ with either $\gamma\in(0,2)$ and $\beta>0$ or  $\gamma=2$ and $\beta\in(0,\frac 12]$. This can be shown by using the results from \cite{gmm}, where an operator decomposition method is used to transfer spectral properties of operators from a Banach space to a larger Banach space. For a detailed discussion of the application of \cite{gmm}, see \cite{meidiss}.
\end{rem}

\begin{rem}
 The sequence of eigenfunctions $(\mu_k)_{k\in\N_0}$ is an orthogonal basis of $E$. In the larger space $\E$, the linear hull $\spn\{\mu_k:k\in\N_0\}$ is still dense, due to the continuous embedding $E\inj\E$.

Also, each $f\in\E$ can (formally) uniquely be decomposed according to the sequence of spectral projections $(\Pi_{\L,k})_{k\in\N_0}$, see the proof of Proposition \ref{pert:spec}. But the obtained series may diverge in $\E$. As an example we consider $f(x):=\exp(-|x|)\in L^2(\cosh x)$. Since $f$ is symmetric, we have $\Pi_{\L,k}f=0$ if $k$ is odd. For $k=2n,\,n\in\N_0$, one can show the asymptotic behaviour for $n\to\infty$:
 \[\|\Pi_{\L,2n}f\|_\omega=\mathcal{O}\Big(\frac{\sqrt{(2n)!}}{n^{1/4}}\Big),\]
 where we use the explicit representation for the Hermite polynomials $H_{2n}$ from (5.5.4) in \cite{szego}, and the asymptotic expansions for $H_{2n}$ given in \cite[Theorem 8.22.9]{szego}. Therefore, the formal series $\sum_{n\in\N_0}\Pi_{\L,2n} f$ is divergent in $\E$. So the sequence $(\mu_k)_{k\in\N_0}$ is neither a Schauder basis nor a representation system of $\E$. However, the sequence $(\mu_k/\|\mu_k\|_E)_{k\in\N_0}$ is still a Bessel system, see \cite{christensen, bilgus} for the definitions.
\end{rem}

\section{Analysis of the Perturbed Operator}\label{sec3}

So far we have discussed the one-dimensional Fokker-Planck operator $\L$ in $\E=L^2(\omega)$, with $\omega(x)=\cosh\beta x$. In this section we investigate the properties of the perturbed (one-dimensional) operator $\L+\Theta$ in $\E$, and we shall summarize the results in Theorem \ref{final_pert_trm}. We begin by specifying the assumptions we make on the perturbation $\Theta$.

\medskip
\begin{samepage}\label{pagge}
\noindent{\bf (C) Conditions on $\boldsymbol\Theta$:} We assume that $\Theta f=\theta*f$, for $f\in\E$, where $\theta$ is a tempered distribution that fulfills the following properties in $\Omega_{\beta/2}$ for some $\beta>0$:
\begin{enumerate}
\renewcommand{\theenumi}{\roman{enumi}}
\renewcommand{\labelenumi}{(\theenumi)}
	\item The Fourier transform $\hat \theta$ can be extended to an analytic function in $\Omega_{\beta/2}$ (also denoted by $\hat \theta$), and $\hat\theta\in L^\infty(\Omega_{\beta/2})$.
	\item It holds $\hat\theta(0)=0$, i.e.~$\theta$ has zero mean.
	\item The mapping $\xi\mapsto\Re\int_0^1\hat\theta(\xi s)/s\d s$ is essentially bounded in $\Omega_{\beta/2}$.
\end{enumerate}
\end{samepage}

\begin{rem}\label{rem31}
 If the conditions {\bf (C)(i)-(ii)} hold for $\theta$, then the mapping $\xi\mapsto\int_0^1\hat\theta(\xi s)/s\d s$ is analytic in $\Omega_{\beta/2}$. This becomes clear when writing $\hat\theta(\xi s)/s= \xi\hat \theta(\xi s)/(\xi s)$, which is analytic for all $s\in(0,1]$ and can be continuously extended to $\hat\theta'(0)\xi$ for $s=0$. The analyticity of $\xi\mapsto\int_0^1\hat\theta(\xi s)/s\d s$ on $\Omega_{\beta/2}$ then follows from \cite[Theorem 4.9.1]{dettman}.
\end{rem}

\begin{lem}\label{thetaeine}
There holds $\Theta f\in\E$ for all $f\in\E$ iff the condition {\bf (C)(i)} holds.  
\end{lem}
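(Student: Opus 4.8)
The plan is to pass everything to the Fourier side, where by Lemma~\ref{analyticity} the space $\E$ becomes a space of functions analytic on the strip $\Omega_{\beta/2}$ whose $L^2$-norms on horizontal lines are uniformly bounded (condition \eqref{seid}), carrying the equivalent norm \eqref{norm_w}, and where $\Theta$ acts by multiplication: $\widehat{\Theta f}=\hat\theta\hat f$. The one structural fact I would record at the outset is that, for each $z_0\in\Omega_{\beta/2}$, the point evaluation $e_{z_0}\colon f\mapsto\hat f(z_0)$ is a \emph{bounded} functional on $(\E,\nn\cdot\nn_\omega)$: by the mean-value property of the analytic function $\hat f$ over a disc $D(z_0,r)\subset\Omega_{\beta/2}$ one gets $|\hat f(z_0)|^2\le(\pi r^2)^{-1}\iint_{D(z_0,r)}|\hat f|^2$, and the $L^2$-norms on the interior lines are controlled by those on $\Im z=\pm\beta/2$ since $b\mapsto\|\hat f(\cdot+\ii b)\|_{L^2}^2$ is convex, hence maximal at the endpoints. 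Thus $\E$ is a reproducing-kernel Hilbert space. For the easy implication ($\Leftarrow$) I would assume {\bf (C)(i)}: then $\hat\theta\hat f$ is analytic on $\Omega_{\beta/2}$, and on each line $\|\hat\theta(\cdot+\ii b)\hat f(\cdot+\ii b)\|_{L^2}\le\|\hat\theta\|_{L^\infty(\Omega_{\beta/2})}\|\hat f(\cdot+\ii b)\|_{L^2}$, so the supremum over $|b|<\beta/2$ stays finite; Lemma~\ref{analyticity}\,(\ref{analyt:i}) then gives $\Theta f\in\E$ (with $\|\Theta\|_{\BB(\E)}\le\|\hat\theta\|_{L^\infty(\Omega_{\beta/2})}$).

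For the converse ($\Rightarrow$) I would argue in three steps. First, to produce the analytic continuation of $\hat\theta$ at all, I would test with $f_*$ defined by $\hat f_*(z):=\e^{-z^2}$; a direct evaluation of \eqref{norm_w} shows $f_*\in\E$, while $\hat f_*$ is entire and nowhere zero. Since $\Theta f_*\in\E$ by hypothesis, the quotient $\hat\theta=\widehat{\Theta f_*}/\hat f_*=\e^{z^2}\,\widehat{\Theta f_*}$ is analytic on $\Omega_{\beta/2}$ and agrees with $\hat\theta$ on $\R$, so it is the sought continuation. Second, I would upgrade ``$\Theta f\in\E$ for all $f$'' to boundedness of $\Theta$ on $\E$ by the closed graph theorem: if $f_n\to f$ and $\Theta f_n\to g$ in $\E$, then boundedness of the evaluations $e_z$ gives $\hat f_n(z)\to\hat f(z)$ and $\widehat{\Theta f_n}(z)\to\hat g(z)$ pointwise on $\Omega_{\beta/2}$; but $\widehat{\Theta f_n}(z)=\hat\theta(z)\hat f_n(z)\to\hat\theta(z)\hat f(z)=\widehat{\Theta f}(z)$, whence $g=\Theta f$ and $\Theta$ is closed, hence bounded.

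The heart of the converse is the third step: the uniform bound $\|\hat\theta\|_{L^\infty(\Omega_{\beta/2})}\le\|\Theta\|_{\BB(\E)}$. Here I would use the reproducing-kernel structure. Fix $z_0\in\Omega_{\beta/2}$ and let $\kappa_{z_0}\in\E$ be the Riesz representative of $e_{z_0}$, i.e.~$\la f,\kappa_{z_0}\ra_\omega=\hat f(z_0)$ for all $f\in\E$; note $\kappa_{z_0}\ne0$ since $e_{z_0}\not\equiv0$ (e.g.~$e_{z_0}(f_*)=\e^{-z_0^2}\ne0$). Then for every $f\in\E$,
\[
 \la \Theta f,\kappa_{z_0}\ra_\omega=\widehat{\Theta f}(z_0)=\hat\theta(z_0)\hat f(z_0)=\hat\theta(z_0)\la f,\kappa_{z_0}\ra_\omega=\la f,\overline{\hat\theta(z_0)}\,\kappa_{z_0}\ra_\omega,
\]
so $\kappa_{z_0}$ is an eigenvector of $\Theta^*$ with eigenvalue $\overline{\hat\theta(z_0)}$. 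Consequently $|\hat\theta(z_0)|\le\|\Theta^*\|_{\BB(\E)}=\|\Theta\|_{\BB(\E)}$, and since $z_0\in\Omega_{\beta/2}$ was arbitrary, $\hat\theta\in L^\infty(\Omega_{\beta/2})$, which together with step one is exactly {\bf (C)(i)}.

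The main obstacle is precisely this $L^\infty$-bound on the \emph{open} strip. The analyticity of $\hat\theta$ (step one) is cheap, but the interior bound cannot be read off directly from the $\E$-norm, which only sees the two boundary lines $\Im z=\pm\beta/2$; a naive pointwise Cauchy estimate for $\widehat{\Theta f}$ degrades like $1/\operatorname{dist}(z_0,\partial\Omega_{\beta/2})$ near the boundary and is useless, and a maximum-principle route would require an a priori Phragmén--Lindelöf growth control in $\Re z$. The adjoint-eigenvalue argument sidesteps all of this, converting the multiplier bound into a spectral bound in one line; the only genuine preparatory work is the boundedness of the point evaluations on $\E$, which rests on Lemma~\ref{analyticity}.
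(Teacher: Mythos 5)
Your proof is correct, and its sufficiency half is exactly the paper's argument (multiplier bound line by line via H\"older, then Lemma~\ref{analyticity}\,(\ref{analyt:i})). Where you genuinely diverge is the necessity direction: the paper compresses it into the single assertion that \eqref{theta_in_e} ``holds for all $f\in\E$ iff {\bf (C)(i)}'', leaving both the analytic continuation of $\hat\theta$ into $\Omega_{\beta/2}$ and, more seriously, the \emph{interior} $L^\infty$ bound implicit, whereas you supply a complete argument. Your three steps are all sound: the continuation via the nonvanishing Gaussian $\hat f_*(z)=\e^{-z^2}$ is clean (and, usefully, $f_*$ is Schwartz, so $\theta*f_*$ is unambiguous for a tempered distribution $\theta$); the closed-graph step is legitimate because the point evaluations $f\mapsto\hat f(z_0)$ are indeed bounded on $\E$ — your convexity claim is even elementary, since $b\mapsto\|\hat f(\cdot+\ii b)\|_{L^2(\R)}^2=2\pi\int_\R|f|^2\e^{2bx}\d x$ is an integral of convex functions; and the reproducing-kernel identity $\Theta^*\kappa_{z_0}=\overline{\hat\theta(z_0)}\,\kappa_{z_0}$ turns the interior bound into a one-line spectral estimate $|\hat\theta(z_0)|\le\|\Theta\|_{\BB(\E)}$. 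This last step is the real gain over any direct route: as you observe, the norm \eqref{norm_w} only sees the boundary lines $\Im z=\pm\beta/2$, so testing $\Theta$ with peaked functions yields boundary control of $\hat\theta$ at best, and Cauchy estimates degrade like the reciprocal distance to $\pd\Omega_{\beta/2}$; the adjoint-eigenvector argument bypasses this entirely and, combined with sufficiency, even delivers the sharp identity $\|\Theta\|_{\BB(\E)}=\sup_{\Omega_{\beta/2}}|\hat\theta|$ in the norm $\nn\cdot\nn_\omega$, subsuming Corollary~\ref{lemma:bdd_theta}. One point you should make explicit rather than tacit: in steps two and three you use $\widehat{\Theta f}(z)=\hat\theta(z)\hat f(z)$ on the whole strip, not merely on $\R$; this follows from the identity theorem, since both sides are analytic on $\Omega_{\beta/2}$ (the left by hypothesis and Lemma~\ref{analyticity}, the right by your step one) and agree on $\R$ — a one-line patch, not a gap.
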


\begin{proof}
Clearly, $\widehat{\Theta f}=\hat\theta\hat f$ is analytic in $\Omega_{\beta/2}$ for $f\in\E$. According to Lemma \ref{analyticity} there holds $\Theta f\in\E$ iff 
      \begin{equation}\label{theta_in_e}
	\sup_{|b|<\beta/2}\|(\hat\theta\hat f) (\cdot+\ii b)\|_{L^2(\R)}<\infty,
      \end{equation}
where we use $\widehat {\Theta f}=\hat\theta\hat f$. Now we apply H\"older's inequality and find that (\ref{theta_in_e}) holds for all $f\in\E$ iff $\theta$ satisfies {\bf (C)(i)}.
\end{proof}

As a consequence of the above lemma and \eqref{theta_in_e}, the product $\hat\theta\hat f$ itself is the Fourier transform of an element of $\E$. So we may define $(\hat\theta\hat f)(\cdot\pm\ii\beta/2)\in L^2(\R)$ for $f\in\E$ according to (\ref{hatf}) whenever $\theta$ satisfies {\bf (C)(i)}. With this we obtain according to Lemma \ref{analyticity} (\ref{analyt:iii}):
\begin{equation}\label{fthet}
 b\mapsto (\hat\theta\hat f)(\cdot+\ii b)\in C([-\beta/2,\beta/2];L^2(\R)). 
\end{equation}

\begin{cor}\label{lemma:bdd_theta}
The convolution $\Theta$ is bounded in $\E$ if the condition {\bf (C)(i)} holds.
\end{cor}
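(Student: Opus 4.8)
The plan is to reduce everything to a Fourier-side estimate, using the equivalent norm $\nn\cdot\nn_\omega$ from \eqref{norm_w} in place of $\|\cdot\|_\omega$, since the two are related by $\nn f\nn_\omega^2=4\pi\|f\|_\omega^2$ and $\nn\cdot\nn_\omega$ is expressed purely through the boundary traces $\hat f(\cdot\pm\ii\beta/2)$ of the analytic continuation. Lemma \ref{thetaeine} already guarantees, under \textbf{(C)(i)}, that $\Theta$ maps $\E$ into $\E$; hence $\Theta$ is a well-defined linear operator on $\E$ and only the norm estimate remains. Using $\widehat{\Theta f}=\hat\theta\hat f$, I would start from $\nn\Theta f\nn_\omega^2=\|(\hat\theta\hat f)(\cdot+\ii\beta/2)\|_{L^2(\R)}^2+\|(\hat\theta\hat f)(\cdot-\ii\beta/2)\|_{L^2(\R)}^2$ and bound each boundary trace by pulling out the $L^\infty$-norm of $\hat\theta$.

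Concretely, for every interior line $|b|<\beta/2$ Hölder's inequality gives $\|(\hat\theta\hat f)(\cdot+\ii b)\|_{L^2(\R)}\le M\,\|\hat f(\cdot+\ii b)\|_{L^2(\R)}$, exactly as in the proof of Lemma \ref{thetaeine}, where $M:=\|\hat\theta\|_{L^\infty(\Omega_{\beta/2})}$; here I use that $\hat\theta$ is analytic, hence continuous, so the pointwise bound $|\hat\theta|\le M$ holds on every horizontal line of the strip. To reach the boundary $b=\pm\beta/2$ I would let $b\to\pm\beta/2$ and invoke the two continuity statements $b\mapsto\hat f(\cdot+\ii b)\in C([-\beta/2,\beta/2];L^2(\R))$ from Lemma \ref{analyticity}\,(\ref{analyt:iii}) and $b\mapsto(\hat\theta\hat f)(\cdot+\ii b)\in C([-\beta/2,\beta/2];L^2(\R))$ from \eqref{fthet}. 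Since both sides of the Hölder inequality are then continuous functions of $b$ on the closed interval, the inequality persists at $b=\pm\beta/2$. Summing the two boundary estimates yields $\nn\Theta f\nn_\omega^2\le M^2\,\nn f\nn_\omega^2$, and the identity $\nn\cdot\nn_\omega^2=4\pi\|\cdot\|_\omega^2$ finally gives $\|\Theta f\|_\omega\le M\|f\|_\omega$, i.e.\ $\|\Theta\|_{\BB(\E)}\le\|\hat\theta\|_{L^\infty(\Omega_{\beta/2})}$.

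The only genuinely delicate point is the passage from the interior lines to the boundary: the interior Hölder estimate is immediate, but one must know that the $L^2$-traces of $\hat\theta\hat f$ depend continuously on $b$ up to $\partial\Omega_{\beta/2}$. This is precisely what \eqref{fthet} supplies (it is itself Lemma \ref{analyticity}\,(\ref{analyt:iii}) applied to the element of $\E$ whose Fourier transform is $\hat\theta\hat f$, guaranteed to exist by Lemma \ref{thetaeine} and \eqref{theta_in_e}), so no estimate beyond the continuity already established is needed, and I expect the argument to be short. I would not invoke a closed-graph argument, since the direct computation additionally yields the explicit operator bound $M$.
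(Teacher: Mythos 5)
Your proposal is correct and follows essentially the same route as the paper's proof: both reduce to the equivalent Fourier-side norm $\nn\cdot\nn_\omega$, bound the interior lines $|b|<\beta/2$ by pulling out $\|\hat\theta\|_{L^\infty(\Omega_{\beta/2})}$, and pass to the boundary $b=\pm\beta/2$ via the continuity statement \eqref{fthet} together with Lemma \ref{analyticity}\,(\ref{analyt:iii}), yielding the same explicit bound $\|\Theta\|_{\BB(\E)}\le\|\hat\theta\|_{L^\infty(\Omega_{\beta/2})}$. Your additional remark that analyticity upgrades the essential bound on $\hat\theta$ to a genuine pointwise bound on each horizontal line is a fine point the paper leaves implicit, but it does not change the argument.
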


\begin{proof}
We apply the norm (\ref{norm_w}) to $\Theta f$.  The Fourier transform turns the convolution into a multiplication, so we get according to \eqref{fthet} and {\bf (C)(i)}
\begin{samepage}
 \begin{align*}
 \nn\Theta f\nn_\omega^2&=\int_\R |\hat\theta\hat f(\xi-\ii\beta/2)|^2\d \xi+\int_\R |\hat\theta\hat f(\xi+\ii\beta/2)|^2\d \xi\\
&=\lim_{b\nearrow\beta/2}\Big[\int_\R |\hat\theta\hat f(\xi-\ii b)|^2\d \xi+\int_\R |\hat\theta\hat f(\xi+\ii b)|^2\d \xi\Big]\\
&\le \|\hat\theta\|^2_{L^\infty(\Omega_{\beta/2})}\lim_{b\nearrow\beta/2}\Big[\int_\R |\hat f(\xi-\ii b)|^2\d \xi+\int_\R |\hat f(\xi+\ii b)|^2\d \xi\Big]\\
&=\|\hat\theta\|^2_{L^\infty(\Omega_{\beta/2})}\nn f\nn_\omega^2.
\end{align*}
\end{samepage}
\end{proof}

\begin{lem}\label{remark_theta_inv}
Under the assumption {\bf (C)} there holds $\Theta:\E_k\to\E_{k+1}\subset\E_k$ for every $k\in\N$.
\end{lem}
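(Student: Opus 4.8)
The plan is to reduce the statement entirely to the Fourier characterization of the subspaces $\E_k$ from Proposition \ref{char:e_k}, combined with the zero-mean condition \textbf{(C)(ii)} on $\theta$. The essential content is a remark about the order of vanishing of analytic functions at the origin: since $\hat\theta$ vanishes at $0$ and $\hat f$ vanishes there to order $k$, the product $\widehat{\Theta f}=\hat\theta\hat f$ must vanish to order $k+1$.

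First I would record that $\Theta f\in\E$ for every $f\in\E$ by Corollary \ref{lemma:bdd_theta} (which holds under \textbf{(C)(i)}), so only the identification of the correct subspace remains. Recall from Proposition \ref{char:e_k} that
\[\E_k=\{f\in\E:\hat f^{(j)}(0)=0,\ 0\le j\le k-1\},\]
so membership in $\E_k$ is exactly the statement that the analytic extension of $\hat f$ has a zero of order at least $k$ at the origin. Because $f\in\E$, Lemma \ref{analyticity} guarantees that $\hat f$ is analytic on $\Omega_{\beta/2}$, in particular at $0$, so these derivatives are well-defined.

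Next I would use that $\widehat{\Theta f}=\hat\theta\hat f$, that by \textbf{(C)(i)} the function $\hat\theta$ is analytic on $\Omega_{\beta/2}$, and that by \textbf{(C)(ii)} it satisfies $\hat\theta(0)=0$; hence $\hat\theta$ has a zero of order at least one at the origin. For $f\in\E_k$ the factor $\hat f$ has a zero of order at least $k$, so the product has a zero of order at least $k+1$. Concretely, applying the Leibniz rule for $0\le j\le k$,
\[(\hat\theta\hat f)^{(j)}(0)=\sum_{i=0}^{j}\binom{j}{i}\hat\theta^{(i)}(0)\,\hat f^{(j-i)}(0),\]
every summand vanishes: the term $i=0$ carries the factor $\hat\theta(0)=0$, while each term with $i\ge 1$ carries a factor $\hat f^{(j-i)}(0)$ with $0\le j-i\le k-1$, which is zero by the characterization of $\E_k$. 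Thus $(\widehat{\Theta f})^{(j)}(0)=0$ for $0\le j\le k$, and Proposition \ref{char:e_k} yields $\Theta f\in\E_{k+1}$. The inclusion $\E_{k+1}\subset\E_k$ is immediate from the same characterization, since vanishing of $\hat f^{(j)}(0)$ for $0\le j\le k$ forces vanishing for $0\le j\le k-1$.

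I do not expect a serious obstacle here: the argument is encoded entirely in two earlier results, namely the Fourier characterization of $\E_k$ and the boundedness of $\Theta$ on $\E$. The only point requiring a moment of care is to confirm that the derivatives $\hat\theta^{(i)}(0)$ and $\hat f^{(j)}(0)$ are meaningful, which follows from the analyticity of both $\hat\theta$ and $\hat f$ on $\Omega_{\beta/2}$ supplied by \textbf{(C)(i)} and Lemma \ref{analyticity}.
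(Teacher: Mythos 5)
Your proof is correct and follows essentially the same route as the paper: the Fourier characterization of $\E_k$ from Proposition \ref{char:e_k} together with $\widehat{\Theta f}=\hat\theta\hat f$ and the zero-mean condition $\hat\theta(0)=0$, which raises the order of vanishing at the origin from $k$ to $k+1$. The only difference is presentational — you make explicit (via the Leibniz rule and the preliminary observation that $\Theta f\in\E$ by Corollary \ref{lemma:bdd_theta}) what the paper states in one line about the order of the zero of the product.
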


\begin{proof}
According to Proposition \ref{char:e_k}, $f\in\E_k$ iff $\xi=0$ is a zero of $\hat f(\xi)$ of order greater or equal to $k$. Because of the assumption $\hat \theta(0)=0$ the Fourier transform $\widehat{\Theta f}=\hat \theta\hat f$ has a zero at least of order $k+1$ for $f\in\E_k$, so $\Theta f \in\E_{k+1}$.
\end{proof}

\begin{cor}\label{cor:inv_subsp}
Let {\bf (C)} hold, and $k\in\N_0$. Then the space $\E_k$ is an $(\L+\Theta)$-invariant subspace of $\E$. 
\end{cor}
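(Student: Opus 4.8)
The plan is to verify the two defining conditions of $(\L+\Theta)$-invariance directly from the definition given in Section \ref{sec2}: that $D(\L+\Theta)\cap\E_k$ is dense in $\E_k$, and that $\ran(\L+\Theta)|_{\E_k}\subset\E_k$. The closedness of $\E_k$ as a subspace is already recorded in Theorem \ref{extension_theorem} (\ref{fp:res:ii}), so these two points are all that remain. The crucial simplification is that $\Theta$ is a \emph{bounded} operator on $\E$ by Corollary \ref{lemma:bdd_theta}; consequently $D(\L+\Theta)=D(\L)$, and every domain-related issue for $\L+\Theta$ collapses to the corresponding one for $\L$.

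For the density condition, I would simply observe that $D(\L+\Theta)\cap\E_k=D(\L)\cap\E_k$, which is dense in $\E_k$ precisely because $\E_k$ is already $\L$-invariant (Theorem \ref{extension_theorem} (\ref{fp:res:ii})). For the range condition I would take $f\in D(\L)\cap\E_k$ and split
\[
 (\L+\Theta)f=\L f+\Theta f.
\]
The first summand lies in $\E_k$ by the $\L$-invariance of $\E_k$. For the second, Lemma \ref{remark_theta_inv} gives $\Theta f\in\E_{k+1}\subset\E_k$ for every $k\in\N$. Hence both summands lie in the closed subspace $\E_k$, and so does their sum, giving $\ran(\L+\Theta)|_{\E_k}\subset\E_k$.

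I do not anticipate a genuine obstacle here; the statement is essentially an assembly of the $\L$-invariance of $\E_k$ with the mapping property $\Theta:\E_k\to\E_k$, the latter being a consequence of the zero-mean condition \textbf{(C)(ii)} through the order-of-vanishing characterization \eqref{f_char_e_k}. The only point that needs separate attention is the boundary index $k=0$, which Lemma \ref{remark_theta_inv} excludes: there one uses $\E_0=\E$ from Theorem \ref{extension_theorem} (\ref{fp:res:ii}) together with the boundedness $\Theta:\E\to\E$ from Corollary \ref{lemma:bdd_theta}, so that $\Theta\E_0\subset\E_0$ holds trivially. With this case dispatched, the argument covers all $k\in\N_0$.
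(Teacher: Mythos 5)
Your proof is correct and takes essentially the same route the paper intends: the corollary is stated there without a separate proof, as an immediate consequence of Lemma \ref{remark_theta_inv} ($\Theta:\E_k\to\E_{k+1}\subset\E_k$) combined with the $\L$-invariance of $\E_k$, which is precisely the decomposition $(\L+\Theta)f=\L f+\Theta f$ you carry out, with the boundedness of $\Theta$ (Corollary \ref{lemma:bdd_theta}) giving $D(\L+\Theta)=D(\L)$ and hence the density condition. Your separate handling of the boundary case $k=0$ via $\E_0=\E$ is a careful touch that the paper leaves implicit.
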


Since the conditions {\bf (C)} are not very handy for direct applications, the following lemma gives some criteria that are simpler to verify and sufficient for {\bf (C)}.

\begin{samepage}\begin{lem}\label{hinr_theta}
 Let $\beta>0$ and $\omega(x)=\cosh\beta x$, and assume that $\theta\in\sw'$ fulfills 
\begin{enumerate}
\renewcommand{\theenumi}{\roman{enumi}}
\renewcommand{\labelenumi}{(\theenumi)}
 \item\label{zeromas} $\hat \theta(0)=0$,
 \item $\theta=\theta_W+\theta_D$ with $\theta_W\in W^{1,1}(\omega^{\frac12},\omega^{\frac12})$ and  $\theta_D\in D:=\{\sum_{j=1}^n a_j\delta_{x_j}:a_j\in\C,\,x_j\in\R,\,n\in\N\}$, where $\delta_{x_j}$ denotes the delta distribution located at $x_j$.
\end{enumerate}
Then $\Theta f=\theta*f$ satisfies {\bf (C)} for this $\beta>0$.
\end{lem}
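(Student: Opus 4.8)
The goal is to verify conditions \textbf{(C)(i)--(iii)} for $\Theta f=\theta*f$ with $\theta=\theta_W+\theta_D$. First, \textbf{(C)(ii)} is nothing but hypothesis~(i), $\hat\theta(0)=0$. For \textbf{(C)(i)}, write $\hat\theta=\hat\theta_W+\hat\theta_D$. Since $\omega=\cosh\beta x$ obeys $\e^{\beta|x|/2}\le\sqrt2\,\omega^{1/2}$, the assumption $\theta_W\in L^1(\omega^{1/2})$ gives $\int_\R|\theta_W(x)|\e^{|x||b|}\,\d x<\infty$ for all $|b|<\beta/2$; hence $\hat\theta_W(\xi+\ii b)=\int_\R\theta_W(x)\e^{-\ii x(\xi+\ii b)}\,\d x$ converges absolutely, and differentiating under the integral (Morera) shows $\hat\theta_W$ is analytic on $\Omega_{\beta/2}$ with $\|\hat\theta_W\|_{L^\infty(\Omega_{\beta/2})}\le\sqrt2\,\|\theta_W\|_{L^1(\omega^{1/2})}$. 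The Dirac part $\hat\theta_D(\zeta)=\sum_j a_j\e^{-\ii x_j\zeta}$ is entire and bounded on the strip by $\sum_j|a_j|\e^{|x_j|\beta/2}$. Together this establishes \textbf{(C)(i)}.

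I also record a decay estimate that will drive \textbf{(C)(iii)}. Integrating by parts (the boundary terms vanish because $\theta_W\in W^{1,1}(\omega^{1/2},\omega^{1/2})$ decays at infinity) gives $\ii\zeta\hat\theta_W(\zeta)=\widehat{\theta_W'}(\zeta)$; applying the previous bound to $\theta_W'\in L^1(\omega^{1/2})$ yields $|\widehat{\theta_W'}(\zeta)|\le\sqrt2\,\|\theta_W'\|_{L^1(\omega^{1/2})}=:M$ on $\Omega_{\beta/2}$, whence the decay $|\hat\theta_W(\zeta)|\le M/|\zeta|$ there.

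For \textbf{(C)(iii)} set $\Phi(\xi):=\int_0^1\hat\theta(\xi s)/s\,\d s$, which is analytic on $\Omega_{\beta/2}$ by Remark~\ref{rem31}. Because the constants $\hat\theta_W(0)$ and $\hat\theta_D(0)$ sum to $\hat\theta(0)=0$, I may write $\Phi=\Phi_W+\Phi_D$ with $\Phi_W(\xi)=\int_0^1\frac{\hat\theta_W(\xi s)-\hat\theta_W(0)}{s}\,\d s$ and $\Phi_D(\xi)=\sum_j a_j\int_0^1\frac{\e^{-\ii x_j\xi s}-1}{s}\,\d s$, each integrand now being $O(s)$ near $s=0$. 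The plan is to show that each piece equals $-\hat\theta_W(0)\ln|\xi|$, resp.\ $-\hat\theta_D(0)\ln|\xi|$, plus a bounded remainder, so that the logarithms cancel. For $\Phi_D$ I use $\int_0^1\frac{\e^{-\ii ws}-1}{s}\,\d s=-\operatorname{Cin}(w)-\ii\,\Si(w)$, which is $-\ln|w|+O(1)$ uniformly on every strip $|\Im w|\le Q$ (as $\Si$ is bounded and $\operatorname{Cin}(w)=\ln|w|+O(1)$); since each $x_j$ is fixed this applies with $w=x_j\xi$ and gives $\Phi_D(\xi)=-\hat\theta_D(0)\ln|\xi|+O(1)$. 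For $\Phi_W$ I split the $s$-integral at $s=1/|\xi|$ (for $|\xi|\ge1$): on $s>1/|\xi|$ the decay $|\hat\theta_W(\xi s)|\le M/(|\xi|s)$ bounds $\int_{1/|\xi|}^1\hat\theta_W(\xi s)/s\,\d s$ by a constant, while $-\hat\theta_W(0)\int_{1/|\xi|}^1\d s/s=-\hat\theta_W(0)\ln|\xi|$ produces exactly the logarithm; on $s<1/|\xi|$ the argument $\xi s$ stays near the real axis, where $|\hat\theta_W(\zeta)-\hat\theta_W(0)|\le C|\zeta|$, giving a bounded contribution. Hence $\Phi_W(\xi)=-\hat\theta_W(0)\ln|\xi|+O(1)$, and in the sum the $\ln|\xi|$-terms cancel, leaving $\Phi(\xi)=O(1)$ for $|\xi|\ge1$. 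For $|\xi|\le1$ and up to the boundary $|\Im\xi|=\beta/2$, boundedness follows from the analyticity of $\Phi$ together with the continuity of $\hat\theta$ on $\overline{\Omega_{\beta/2}}$ (dominated convergence, $|\theta_W|\e^{\beta|x|/2}\in L^1$). This is \textbf{(C)(iii)}.

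The crux, and the only genuinely delicate point, is that neither $\Phi_W$ nor $\Phi_D$ is bounded on its own: each carries a logarithmic divergence $\sim\ln|\xi|$ as $|\xi|\to\infty$, with coefficients $-\hat\theta_W(0)$ and $-\hat\theta_D(0)$, and these cancel only because of the zero-mean hypothesis $\hat\theta(0)=\hat\theta_W(0)+\hat\theta_D(0)=0$. One therefore cannot estimate the Sobolev and Dirac parts separately; the cancellation must be made explicit, and this is exactly where the derivative hypothesis $\theta_W'\in L^1(\omega^{1/2})$ enters, since it is the $1/|\zeta|$ decay of $\hat\theta_W$ that lets the splitting at $s=1/|\xi|$ isolate the logarithm with the correct coefficient. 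The remaining regimes (bounded $|\xi|$ and the strip boundary) are soft and follow from analyticity and the exponential integrability $|\theta_W|\e^{\beta|x|/2}\in L^1$ already used for \textbf{(C)(i)}.
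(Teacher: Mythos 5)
Your proof is correct, and it takes a genuinely different route from the paper's. The paper neutralizes the two individually non-zero means \emph{before} estimating anything: with $M:=\hat\theta_D(0)/\sqrt{2\pi}$ it passes to $\theta_W^*:=\theta_W+M\mu$ and $\theta_D^*:=\theta_D-M\mu$, each of zero mean, and then verifies {\bf (C)(i)} and {\bf (C)(iii)} for each part \emph{separately}, rewriting the integral in {\bf (C)(iii)} as the path integral $\int_{0\to\xi}\hat\theta_D^*(z)/z\,\d z$ and bounding it along a horizontal-plus-vertical contour. You instead keep the original split $\theta=\theta_W+\theta_D$, accept that $\Phi_W$ and $\Phi_D$ each diverge like $-\hat\theta_W(0)\ln|\xi|$ resp.\ $-\hat\theta_D(0)\ln|\xi|$, and cancel the logarithms using $\hat\theta(0)=0$. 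The analytic inputs coincide: both arguments rest on the decay $|\hat\theta_W(\zeta)|\le M/|\zeta|$ obtained from $\theta_W'\in L^1(\omega^{1/2})$ by integration by parts, and on the boundedness of sine-integral-type oscillatory integrals uniformly on horizontal strips. What differs is the bookkeeping: the Gaussian shift buys the paper estimates in which each renormalized piece is bounded outright, with no asymptotic expansion to control; your version is more transparent about exactly where the zero-mean hypothesis enters (it is the \emph{only} place the two pieces interact), at the price of having to make the $\ln|\xi|$-expansion uniform on $\Omega_{\beta/2}$, which you do correctly via the splitting at $s=1/|\xi|$.

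One caveat, which your argument shares with the paper's: both tacitly require $x_j\neq 0$. A Dirac at the origin contributes $0$ to $\Phi_D$ rather than $-a_j\ln|\xi|$ (and in the paper's contour estimate it produces $\int_1^{\Re\xi}\d z/z=\ln\Re\xi$, which is unbounded), so the cancellation breaks down; indeed $\theta=\delta_0-\mu/\sqrt{2\pi}$ satisfies the hypotheses of the lemma as literally stated but violates {\bf (C)(iii)}, since then $\Re\int_0^1\hat\theta(\xi s)/s\,\d s=\ln|\xi|+O(1)$ for real $\xi$. Two further harmless slips in your write-up: after regularization the integrands are $O(1)$, not $O(s)$, as $s\to0$ (it is the numerators that are $O(s)$); and for $s<1/|\xi|$ the point $\xi s$ need not be near the real axis, only in the compact set $\{|\zeta|\le1\}\cap\overline{\Omega_{\beta/2}}$, where the bound $|\hat\theta_W(\zeta)-\hat\theta_W(0)|\le C|\zeta|$ still holds by analyticity inside the strip together with continuity of $\hat\theta_W$ up to the boundary (as $\theta_W\e^{\pm\beta x/2}\in L^1(\R)$).
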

\end{samepage}

\begin{proof}
In general $\hat\theta_W(0)$ and $\hat\theta_D(0)$ are not zero, so it is convenient to define $\theta_W^*:=\theta_W+M\mu$ and $\theta_D^*:=\theta_D-M\mu$, where $M:=\hat\theta_D(0)/\sqrt{2\pi}$. Then $\hat\theta_W^*$ and $\theta_D^*$ have zero mass, and we still have $\theta_W^*\in W^{1,1}(\omega^{\frac12},\omega^{\frac12}).$ Since $\F_{x\to\xi} \delta_{x_j}=\e^{-\ii \xi x_j}$ and $\hat\mu(\xi)=\sqrt{2\pi}\mu(\xi)$, it is immediate that $\theta^*_D$ satisfies {\bf (C)(i)}. In order to see {\bf (C)(iii)} for $\theta^*_D$, we note that the integral occurring in this condition can be rewritten as the line integral from $0$ to $\xi$:
\[\int_{0\to\xi}\frac{\hat\theta_D^*(z)}{z}\d z\]
which is path-independent in $\C$ (and thus in $\Omega_{\beta/2}$), since $\hat\theta_D^*(z)/z$ is analytic in $\Omega_{\beta/2}$ with a removable singularity at $z=0$. Therefore the integral itself is analytic, and thus uniformly bounded on every compact subset of $\C$. Because of this, it is sufficient to show uniform boundedness of this integral as $|\xi|\to\infty$ in $\Omega_{\beta/2}$. We outline this for the map $\xi\mapsto\e^{-\ii x_j\xi}$ for any fixed $x_j\in\R$ and $\Re \xi>1$, the case $\Re\xi<-1$ is analogous. Thereby we choose the following integration path (note that we may start from $z=1$, since the integral from $0$ to $1$ is a constant)
\begin{align*}
 \left|\int_{1\to\xi}\frac{\e^{-\ii x_jz}}{z}\d z\right|& \le\left| \int_1^{\Re(\xi)}\frac{\e^{-\ii x_jz}}{z}\d z\right|+\left|\int_{\Re(\xi)\to\Re(\xi)+\ii\Im(\xi)}\frac{\e^{-\ii x_jz}}{z}\d z\right|\\
&\le\left| \int_{x_j}^{\Re(\xi)x_j}\frac{\e^{-\ii z}}{z}\d z\right|+\frac\beta 2\e^{|x_j|\beta/2}.
\end{align*}
The first integral is known to remain uniformly bounded as $\Re(\xi)\to+\infty$. For estimating the second integral we used $\xi\in\Omega_{\beta/2}$ and $\Re\xi\ge1$.
Since $\hat\mu=\sqrt{2\pi}\mu$ decays sufficiently fast in $\Omega_{\beta/2}$, it is clear that the integral of $\hat\mu(z)/z$ from $1$ to $\xi$ also remains uniformly bounded as $\xi\to+\infty$. Altogether, we conclude that $\hat\theta_D^*$ satisfies {\bf (C)(iii)}.

Now we verify the same properties for $\theta_W^*$. Since $\theta_W^*\in L^1(\omega^{\frac 12})$, we may extend $\hat \theta_W^*$ to an analytic function in $\Omega_{\beta/2}$, and there holds (\ref{hatf}), cf.~\cite[Proposition XVI.1.3]{dautli5}. The Fourier transform is a continuous map from $L^1(\R)$ to $B_0(\R)$, i.e.~the continuous functions decaying at infinity, equipped with the uniform norm. Therefore, $\theta_W^*\in L^1(\omega^{\frac 12})$ implies
\begin{align*}
  \|\hat \theta_W^*\|_{L^\infty(\Omega_{\beta/2})}&=\sup_{|b|<\frac\beta2}\sup_{\xi\in\R}|\hat\theta_W^*(\xi+\ii b)|
      \le \!\!\sup_{|b|<\frac\beta2}\|\theta_W^*(x)\e^{bx}\|_{L^1(\R)}\\
    &  \le \|\theta_W^*(x)\e^{\frac\beta 2|x|}\|_{L^1(\R)}<\infty.
\end{align*}

So {\bf (C)(i)} is satisfied. For {\bf (C)(iii)} it is sufficient to show that for some $c>0$ and all $\xi\in\Omega_{\beta/2}$ with $|\xi|\ge 1$ there holds $|\hat\theta_W^*(\xi)|\le c/|\xi|$, which is fulfilled if $\F({\theta_W^*}')\in\ L^\infty(\Omega_{\beta/2})$. Analogously to the previous part of the proof we obtain that this is satisfied if ${\theta_W^*}'\in L^1(\omega^{\frac 12})$. We conclude that $\theta_W^*$ fulfills {\bf (C)(i)} and {\bf (C)(iii)} if $\theta_W^*\in W^{1,1}(\omega^{\frac12},\omega^{\frac12})$.

Finally, $\theta$ satisfies the condition {\bf (C)(ii)} due to the assumption (\ref{zeromas}).
\end{proof}

For the rest of the article, we shall always assume that $\Theta$ satisfies the condition {\bf (C)} for some fixed $\beta>0$ , and we choose the weight function $\omega(x)=\cosh \beta x$ with this particular $\beta$. The first result about the perturbed Fokker-Planck operator is the following lemma:

\begin{lem}\label{sigma_perturb}
The operator $\L+\Theta$ has compact resolvent in $\E$.
\end{lem}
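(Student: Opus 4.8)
The plan is to realize $\L+\Theta$ as a bounded perturbation of $\L$, which already has compact resolvent by Theorem \ref{trm:compactness}, and to exploit the standard resolvent factorization. Since $\Theta\in\BB(\E)$ by Corollary \ref{lemma:bdd_theta}, the operator $\L+\Theta$ is closed on $D(\L)$. For any $\zeta\in\rho(\L)$ one has, on $D(\L)$, the algebraic identity $\zeta-(\L+\Theta)=\big(I-\Theta R_\L(\zeta)\big)(\zeta-\L)$, so that, whenever $I-\Theta R_\L(\zeta)$ is boundedly invertible, $\zeta\in\rho(\L+\Theta)$ and
\[
 R_{\L+\Theta}(\zeta)=R_\L(\zeta)\big(I-\Theta R_\L(\zeta)\big)^{-1}.
\]
The right-hand side is then the product of the compact operator $R_\L(\zeta)$ with a bounded operator, hence compact.

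First I would secure a single point of $\rho(\L+\Theta)$ by arranging $\|\Theta R_\L(\zeta)\|_{\BB(\E)}<1$. To this end I would sharpen Lemma \ref{reso_estim}: revisiting identity \eqref{4star} and using $\tilde\omega\ge(2\Re\zeta-1-\beta^2)\omega$ (the extra term $x\beta\sinh\beta x$ is nonnegative) together with the Cauchy--Schwarz inequality yields $\tfrac12(2\Re\zeta-1-\beta^2)\|f\|_\omega^2\le\|f\|_\omega\|g\|_\omega$, i.e.~$\|R_\L(\zeta)\|_{\BB(\E)}\le \tfrac{2}{2\Re\zeta-1-\beta^2}\to 0$ as $\Re\zeta\to\infty$. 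Since $\|\Theta\|_{\BB(\E)}\le\|\hat\theta\|_{L^\infty(\Omega_{\beta/2})}$ is finite by Corollary \ref{lemma:bdd_theta}, choosing $\Re\zeta$ large enough gives $\|\Theta R_\L(\zeta)\|_{\BB(\E)}<1$, so $I-\Theta R_\L(\zeta)$ is invertible via a Neumann series. This places such $\zeta$ in $\rho(\L+\Theta)$, validates the resolvent formula above, and shows $R_{\L+\Theta}(\zeta)$ is compact at this point.

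Finally, to pass from one point to all of $\rho(\L+\Theta)$ I would invoke the resolvent identity: compactness of $R_{\L+\Theta}(\zeta_0)$ at a single $\zeta_0\in\rho(\L+\Theta)$ propagates to every $\zeta\in\rho(\L+\Theta)$, cf.~\cite[Theorem III.6.29]{kato}.

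I expect the only genuine work to be the quantitative decay $\|R_\L(\zeta)\|_{\BB(\E)}\to0$, which is what guarantees $\rho(\L+\Theta)\neq\emptyset$; everything else is the routine ``compact times bounded'' argument. Since a bounded perturbation does not automatically preserve a nonempty resolvent set, pinning down this one good value of $\zeta$ is the crux of the proof.
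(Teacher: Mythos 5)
Your proof is correct, and in substance it is an unpacked version of the paper's argument: the paper disposes of the lemma in two lines by citing \cite[Proposition III.1.12]{engel} (a bounded perturbation of an infinitesimal generator with compact resolvent again has compact resolvent), with the hypotheses supplied by Theorems \ref{trm:compactness} and \ref{extension_theorem} and Corollary \ref{lemma:bdd_theta}. The genuine difference is how nonemptiness of $\rho(\L+\Theta)$ --- which you rightly identify as the crux hidden in the citation --- gets secured. The paper obtains it from semigroup theory: $\L$ generates a $C_0$-semigroup, the bounded perturbation theorem makes $\L+\Theta$ a generator, and Hille--Yosida then yields a right half-plane in $\rho(\L+\Theta)$ along with resolvent decay. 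You bypass semigroup theory entirely and extract the quantitative bound $\|R_\L(\zeta)\|_{\BB(\E)}\le 2/(2\Re\zeta-1-\beta^2)$ directly from the coercivity identity \eqref{4star}, using $x\sinh\beta x\ge 0$; this is legitimate, since by Corollary \ref{wob} every $\zeta$ with $\Re\zeta\ge 1+\beta^2/2$ already lies in $\rho(\L)$, so the a priori estimate applies to $f=R_\L(\zeta)g\in D(\L)$. (Equivalently, you could have invoked the dissipativity of $\L-1-\beta^2/2$, i.e.~Corollary \ref{cor_dissip} together with Lemma \ref{abschluss_LL}, which gives $\|R_\L(\zeta)\|_{\BB(\E)}\le (\Re\zeta-1-\beta^2/2)^{-1}$ without redoing the computation.) Your factorization $\zeta-(\L+\Theta)=(I-\Theta R_\L(\zeta))(\zeta-\L)$, the Neumann series, and the propagation of compactness to all of $\rho(\L+\Theta)$ via \cite[Theorem III.6.29]{kato} are precisely the ingredients inside the cited proposition. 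What the paper's route buys is brevity and reuse of machinery it needs anyway, since the semigroup is the object of interest later; what yours buys is self-containedness with an explicit constant, and it carries over verbatim to the $d$-dimensional setting of Section \ref{sec35}.
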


\begin{proof}
A bounded perturbation of an infinitesimal generator with compact resolvent has compact resolvent again, see
\cite[Proposition III.1.12]{engel}. Then the result follows by combining the results of Theorems \ref{trm:compactness} and \ref{extension_theorem} for $\L$, and Corollary \ref{lemma:bdd_theta} for $\Theta$. 
\end{proof}

As a consequence, the spectrum of $\L+\Theta$ in $\E$ is non-empty and consists only of eigenvalues. In order to characterize the entire spectrum, we introduce the following ladder operators\footnote{One of the best-known applications of ladder operators occurs in the spectral analysis of the quantum harmonic oscillator, see e.g.~\cite{helffer2002}.}, namely the {\em annihilation operator} 
\[\alpha^-:\E_1\to\E:f\mapsto \int_{-\infty}^x f(y)\d y,\]
and its formal inverse $\alpha^+:f\mapsto f'$, the {\em creation operator}.

\begin{lem}\label{prop_annil}
The annihilation operator $\alpha^-$ has the following properties:
\begin{enumerate}\renewcommand{\theenumi}{\roman{enumi}}
\renewcommand{\labelenumi}{(\theenumi)}
 \item\label{anihi:i} For any $k\in\N$ there holds $\alpha^-\in\BB(\E_k,\E_{k-1})$.
 \item\label{anihi:ii} In $\E_1$ the operators $\Theta$ and $\alpha^-$ commute.
 \item\label{anihi:iii} Let $f\in\E_1,\,\zeta\in\C$ such that $(\L+\Theta)f=\zeta f$. Then 
	\[(\L+\Theta)(\alpha^-f)=(\zeta+1) (\alpha^-f).\]
\end{enumerate}

\end{lem}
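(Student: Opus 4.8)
For part \eqref{anihi:i}, the key observation is that $\alpha^-$ acts as an anti-derivative, so on the Fourier side it should correspond (up to a factor of $\ii$) to division by $\xi$. More precisely, for $f\in\E_1$ we have $\widehat{\alpha^-f}(\xi)=\hat f(\xi)/(\ii\xi)$, which is legitimate precisely because $f\in\E_1$ guarantees $\hat f(0)=0$ (by Proposition \ref{char:e_k}), so the singularity at $\xi=0$ is removable. First I would verify this Fourier identity rigorously, then apply the norm $\nn\cdot\nn_\omega$ from \eqref{norm_w}: since $|\xi|\ge\beta/2$ on the lines $|\Im\xi|=\beta/2$, division by $\xi$ is a bounded operation there, giving $\nn\alpha^-f\nn_\omega\le (2/\beta)\nn f\nn_\omega$. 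To see that $\alpha^-f\in\E_{k-1}$ when $f\in\E_k$, I would use the characterization \eqref{f_char_e_k}: if $\hat f$ vanishes to order $k$ at the origin, then $\hat f(\xi)/(\ii\xi)$ vanishes to order $k-1$, so by Proposition \ref{char:e_k} the anti-derivative lies in $\E_{k-1}$. This establishes $\alpha^-\in\BB(\E_k,\E_{k-1})$.

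For part \eqref{anihi:ii}, commutativity of $\Theta$ and $\alpha^-$ on $\E_1$ is most transparent on the Fourier side as well: $\Theta$ is multiplication by $\hat\theta$ and $\alpha^-$ is division by $\ii\xi$, and these two multiplication/division operations commute trivially as pointwise operations on functions. The only subtlety is to check that both compositions are well-defined on $\E_1$ and land in the correct spaces; since $\Theta:\E_1\to\E_2\subset\E_1$ by Lemma \ref{remark_theta_inv} and $\alpha^-:\E_1\to\E$ by part \eqref{anihi:i}, both $\Theta\alpha^-f$ and $\alpha^-\Theta f$ make sense, and their equality follows from $\hat\theta(\xi)\cdot\hat f(\xi)/(\ii\xi)=\hat f(\xi)\hat\theta(\xi)/(\ii\xi)$.

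For part \eqref{anihi:iii}, the heart of the matter is the intertwining relation $\L\alpha^-=\alpha^-(\L+1)$ on $\E_1$, which expresses that $\alpha^-$ is a genuine ladder operator shifting eigenvalues by $+1$. I would establish this at the level of the distributional operator $\mathfrak L$ by a direct computation: if $g=\alpha^-f$ so that $g'=f$, then differentiating the defining relation $\mathfrak Lg=g''+xg'+g$ and comparing with $\mathfrak Lf=f''+xf'+f$ yields $(\mathfrak Lg)'=\mathfrak Lf+f$, i.e.~$\mathfrak L f=(\mathfrak L\alpha^-f)'-\alpha^-f=\alpha^+\mathfrak L\alpha^- f-\alpha^- f$ which rearranges to the intertwining identity $\mathfrak L\alpha^-f=\alpha^-(\mathfrak L+1)f$. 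Combining this with the commutation of $\Theta$ and $\alpha^-$ from part \eqref{anihi:ii}, I compute
\[
(\L+\Theta)(\alpha^-f)=\alpha^-(\L+1)f+\alpha^-\Theta f=\alpha^-(\L+\Theta)f+\alpha^-f=\zeta\alpha^-f+\alpha^-f=(\zeta+1)\alpha^-f,
\]
using the hypothesis $(\L+\Theta)f=\zeta f$. \textbf{The main obstacle} is the verification that the intertwining relation, derived formally for the distributional operator $\mathfrak L$, transfers cleanly to the closed operator $\L$: one must confirm that $\alpha^-f\in D(\L)=\{h\in\E:\mathfrak Lh\in\E\}$ whenever $f\in\E_1$ satisfies $(\L+\Theta)f=\zeta f$, so that the algebraic manipulations take place within the correct domains. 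This is where I would invoke Lemma \ref{abschluss_LL} together with part \eqref{anihi:i} to guarantee domain membership before passing to the eigenvalue identity.
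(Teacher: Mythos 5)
Your proof is correct, and for parts (ii) and (iii) it is essentially the paper's own argument: the paper also proves (iii) by applying $\alpha^-$ to the eigenvalue equation via the ladder identity $\alpha^-(\L f)=\L(\alpha^-f)-\alpha^-f$ (equivalently your $\mathfrak L\alpha^-f=\alpha^-(\mathfrak L+1)f$) combined with the commutation of $\Theta$ and $\alpha^-$, and your domain check that $\mathfrak L\alpha^- f=\alpha^-\big((\zeta+1)f-\Theta f\big)\in\E$ forces $\alpha^-f\in D(\L)$ is a welcome explicit rendering of what the paper leaves implicit. The only genuine difference is in part (i): the paper verifies $\alpha^-:\E_k\to\E_{k-1}$ from the moment characterization \eqref{e_k} by integration by parts (first for $f\in C_0^\infty$) and gets boundedness from the Poincar\'e inequality \eqref{poincare}, whereas you work on the Fourier side via \eqref{f_char_e_k} and the bound $|\xi\pm\ii\beta/2|\ge\beta/2$ on the boundary lines; since Proposition \ref{char:e_k} makes the two characterizations equivalent and the Poincar\'e inequality is itself proved by exactly this boundary-line estimate, the two routes are close cousins, with yours buying the explicit constant $2/\beta$ at the price of one extra verification you only announce: to conclude $\alpha^-f\in\E$ from $\hat f(\xi)/(\ii\xi)$ you need the \emph{interior} uniform bound \eqref{seid} of Lemma \ref{analyticity}\,(\ref{analyt:i}), not just finiteness on the two boundary lines, and near $\xi=0$ on lines $|\Im\xi|=b$ small this requires using the vanishing $\hat f(0)=0$ quantitatively (e.g.\ a local maximum-modulus/Cauchy estimate, or a density argument as in the paper); make sure this step is actually carried out. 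Finally, note a slip in your derivation of the intertwining identity: from $(\mathfrak L\alpha^-f)'=\mathfrak Lf+f$ one gets $\mathfrak Lf=(\mathfrak L\alpha^-f)'-f$, not $(\mathfrak L\alpha^-f)'-\alpha^-f$ as you wrote; your final identity and the ensuing computation are nevertheless correct, provided you also fix the integration constant (the difference $\mathfrak L\alpha^-f-\alpha^-\big((\mathfrak L+1)f\big)$ is a constant, and a nonzero constant is not in $\E$, so it vanishes).
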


\begin{proof}
First we show (\ref{anihi:i}). The property $\alpha^-:\E_k\to\E_{k-1}$ can be verified by using the explicit representation (\ref{e_k}) of the $\E_k$, and integration by parts (first for $f\in C_0^\infty(\R)$). The boundedness of $\alpha^-$ follows immediately from the Poincar\'e inequality (\ref{poincare}). Property (\ref{anihi:ii}) holds true since $\Theta$ is a convolution. For Result (\ref{anihi:iii}) one applies $\alpha^-$ to the equation $(\L+\Theta)f=\zeta f$, and uses the identity $\alpha^-(\L f)=\L(\alpha^-f)-\alpha^-f$ and the Property (\ref{anihi:ii}).
\end{proof}

By using the annihilation operator, we are able to prove:

\begin{prop}\label{pert:spec}
We have the following spectral properties of $\L+\Theta$ in $\E$:
\begin{enumerate}\renewcommand{\theenumi}{\roman{enumi}}
\renewcommand{\labelenumi}{(\theenumi)}
 \item\label{ps:i}  $\sigma(\L+\Theta)=-\N_0$.
 \item\label{ps:ii} For each $k\in\N_0$, the eigenspace $\ker(\L+\Theta+k)$ is one-dimensional.% and spanned by the corresponding eigenfunction, denoted by $f_k$.
 \item\label{ps:iii} The eigenfunction $f_k$ to the eigenvalue $-k\in\-\N_0$ is explicitly given by (up to a normalization constant)
\begin{equation}\label{rec_f_k}
 f_k=(\alpha^+)^kf_0=f_0^{(k)},\quad\text{and}\quad \hat f_0(\xi)=\exp\Big(-\frac{\xi^2}2+\int_0^1\frac{\hat\theta(\xi s)}s\d s\Big),\quad\xi\in\Omega_{\beta/2}.
\end{equation}
\end{enumerate}
In particular, $f_0$ is the unique stationary solution with unit mass of the perturbed Fokker-Planck equation (\ref{pert_fp}) in one dimension.
\end{prop}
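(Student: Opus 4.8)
The plan is to prove the three claims in the order \eqref{ps:iii}, \eqref{ps:i}, \eqref{ps:ii}, since the explicit eigenfunctions drive the whole argument. First I would construct the ground state $f_0$ on the Fourier side. Using $\widehat{\mathfrak L f}=-\xi^2\hat f-\xi\hat f'$ together with $\widehat{\Theta f}=\hat\theta\hat f$, the stationary equation $(\L+\Theta)f_0=0$ becomes the first-order linear ODE $\xi\hat f_0'=(\hat\theta-\xi^2)\hat f_0$. Separating variables and integrating from $0$, with the integration constant fixed by the normalization $\hat f_0(0)=1$ (i.e.~unit mass), yields
\[
\hat f_0(\xi)=\exp\Big(-\frac{\xi^2}2+\int_0^\xi\frac{\hat\theta(u)}u\d u\Big)=\exp\Big(-\frac{\xi^2}2+\int_0^1\frac{\hat\theta(\xi s)}s\d s\Big),
\]
the two forms agreeing after the substitution $u=\xi s$; this is precisely \eqref{rec_f_k}. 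The essential point is then to verify $f_0\in\E$ via Lemma \ref{analyticity}: analyticity of $\hat f_0$ on $\Omega_{\beta/2}$ follows from Remark \ref{rem31} (the integral term is analytic under {\bf (C)(i)-(ii)}, and $\exp$ is entire), while the uniform bound \eqref{seid} follows by estimating $|\hat f_0(\xi+\ii b)|\le\exp(-\xi^2/2+b^2/2)\,\exp\big(\Re\!\int_0^1\hat\theta((\xi+\ii b)s)/s\,\d s\big)$ and invoking the essential boundedness of the last exponent, which is exactly condition {\bf (C)(iii)}.

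Next I would climb the ladder with the creation operator. Differentiating the identity $\mathfrak L(f')=(\mathfrak Lf)'-f'$ and using that $\Theta$ commutes with $\pd_x$ gives the intertwining $(\L+\Theta)(\alpha^+f)=(\zeta-1)\alpha^+f$ whenever $(\L+\Theta)f=\zeta f$. Hence $f_k:=(\alpha^+)^kf_0=f_0^{(k)}$ solves $(\L+\Theta)f_k=-kf_k$; on the Fourier side $\hat f_k=(\ii\xi)^k\hat f_0$, which is again analytic with Gaussian decay, so $f_k\in\E\setminus\{0\}$. This establishes $-\N_0\subseteq\sigma(\L+\Theta)$ and the explicit form in \eqref{ps:iii}.

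For the reverse inclusion in \eqref{ps:i}, I would repeat the Fourier computation with general $\zeta$: the same ODE now integrates to $\hat f(\xi)=C_\pm\,\xi^{-\zeta}\hat f_0(\xi)$ for $\xi\in\R^\pm$ (the computation carried out at the beginning of Appendix \ref{app:b}). Since $\hat f_0$ is analytic and nowhere vanishing on $\Omega_{\beta/2}$, the requirement from Lemma \ref{analyticity} that $\hat f$ extend analytically across the imaginary axis reduces verbatim to the corresponding requirement on the factor $\xi^{-\zeta}$ already treated in Proposition \ref{prop_spec}: for $\zeta\notin\Z$ the two branches fail to match, for $\zeta\in\N$ a pole at $\xi=0$ remains, and only for $\zeta\in-\N_0$ (with $C_-=C_+$) is $\hat f$ entire. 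Thus $\sigma(\L+\Theta)\subseteq-\N_0$. The one-dimensionality \eqref{ps:ii} falls out of the same analysis: for $\zeta=-k$ the analyticity constraint forces the single relation $C_-=C_+$, leaving one free constant, so $\ker(\L+\Theta+k)=\spn\{f_k\}$. Finally, since $\ker(\L+\Theta)=\spn\{f_0\}$ and $\hat f_0(0)=1$, the function $f_0$ is the unique unit-mass steady state.

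I expect the main obstacle to be the verification $f_0\in\E$, and more precisely recognizing that condition {\bf (C)(iii)} is the exact hypothesis needed to bound $\|\hat f_0(\cdot+\ii b)\|_{L^2(\R)}$ uniformly in $|b|<\beta/2$: it is the \emph{real part} appearing in {\bf (C)(iii)} that survives in $|\hat f_0|$, while the $\exp(b^2/2)$ factor is harmless on the bounded strip. The branch-cut bookkeeping in the reverse inclusion is delicate but is entirely parallel to Proposition \ref{prop_spec}, once one observes that the nonvanishing analytic prefactor $\hat f_0$ can neither create nor remove singularities at the imaginary axis.
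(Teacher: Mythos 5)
Your proposal is correct, but it takes a genuinely different route from the paper's proof. The paper establishes \eqref{ps:i} and \eqref{ps:ii} structurally: it first shows $\bigcap_{k\in\N}\E_k=\{0\}$ by analyticity of $\hat f$, then applies the \emph{unperturbed} spectral projection $\Pi_{\L,k}$ to the eigenvalue equation — using $\Theta:\E_k\to\E_{k+1}$ from Lemma \ref{remark_theta_inv}, so that $\Pi_{\L,k}\Theta f=0$ — to force $\zeta=-k$; it then fills the spectrum \emph{downward} with the annihilation operator $\alpha^-$ and excludes a minimal eigenvalue $-k_0$ by arguing that $(\L+\Theta)|_{\E_{k_0+1}}$, having compact resolvent, cannot have empty spectrum; one-dimensionality follows from $\dim\ran\Pi_{\L,k}=1$ together with $\Pi_{\L,k}f\neq0$ for any eigenfunction to $-k$; only afterwards is the Fourier ODE solved to identify $f_k$. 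You instead run the branch-cut analysis of Proposition \ref{prop_spec} uniformly in $\zeta$: every strip-analytic solution factors as $C_\pm\,\xi^{-\zeta}\hat f_0(\xi)$, and since $\hat f_0$ is analytic and nonvanishing on $\Omega_{\beta/2}$ it neither creates nor removes singularities, so the unperturbed branch/pole bookkeeping applies verbatim. This yields all three claims in one stroke, is fully constructive (you climb with $\alpha^+$ rather than descend with $\alpha^-$), and — unlike the paper's proof, where existence of eigenfunctions in $\E$ comes out of the abstract argument — it makes explicit where condition \textbf{(C)(iii)} enters, namely in the bound $|\hat f_0(\xi+\ii b)|\le C\e^{-\xi^2/2}\e^{b^2/2}$ needed to verify $f_0\in\E$ via Lemma \ref{analyticity}.

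Two remarks on the trade-off. First, your eigenvalue computation by itself controls only $\sigma_p(\L+\Theta)$; to conclude $\sigma(\L+\Theta)\subseteq-\N_0$ you must still invoke the compactness of the resolvent (Lemma \ref{sigma_perturb}), which gives $\sigma=\sigma_p$ — this deserves one explicit sentence, although the paper states it immediately before the proposition, so it is a presentational gap rather than a mathematical one. Second, the paper's projection/ladder argument transfers essentially unchanged to the $d$-dimensional setting of Proposition \ref{pert:spec:d}, whereas your explicit Fourier solution is special to $d=1$ (the paper itself remarks that such explicit formulas become inconvenient in higher dimensions); on the other hand, your route avoids the somewhat delicate step that compact resolvent forces nonempty spectrum on the invariant subspace $\E_{k_0+1}$, since you exhibit an eigenfunction in $\E$ for every $-k\in-\N_0$ directly.
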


\begin{proof}
In order to show (\ref{ps:i}) we first prove that $\bigcap_{k\in\N}\E_k=\{0\}$. According to (\ref{f_char_e_k}) there holds
\[\bigcap_{k\in\N}\E_k=\left\{f\in\E:\hat f^{(k)}(0)=0,\, k\in\N_0\right\}.\]
But for $f\in\E$, $\hat f$ is analytic, and the only analytic function with a zero of infinite order is the zero function, which proves the statement.

Thus, for any eigenfunction $f$, there exists a unique $k\in\N_0$ such that $f\in\E_k\backslash\E_{k+1}$, which is the minimal $k\in\N_0$ with the property $\Pi_{\L,k} f\neq 0$. Applying this projection to the eigenvalue equation yields
\[\Pi_{\L,k}(\L+\Theta)f=-k\Pi_{\L,k}f=\zeta\Pi_{\L,k}f,\]
where we used $\Theta f\in\E_{k+1}$ (cf.~Lemma \ref{remark_theta_inv}). Hence, the eigenvalue corresponding to $f$ satisfies $\zeta=-k$. Thus  $\sigma(\L+\Theta)\subseteq-\N_0$. If now $f_k$ is an eigenfunction with eigenvalue $-k$, we can apply $k$ times the continuous operator $\alpha^-$ to $f_k$, and create eigenfunctions to all eigenvalues $\{-k+1,\ldots,0\}$. So either $\sigma(\L+\Theta)=-\N_0$ or $\sigma(\L+\Theta)=\{-k_0,\ldots,0\}$, i.e.~there exists some minimal eigenvalue $-k_0$. But the latter scenario is actually not possible, because then the operator $(\L+\Theta)|_{\E_{k_0+1}}$ would have empty spectrum in $\E_{k_0+1}$, which contradicts the fact that it still has a compact resolvent in $\E_{k_0+1}$.

In order to verify (\ref{ps:ii}) we recall from the first part of the proof that if $f$ is an eigenfunction of $\L+\Theta$ to the eigenvalue $-k$, then $k=\argmin\{\Pi_{\L,j}f\neq 0:j\in\N_0\}$. In particular,
\begin{equation}\label{3.6}
  \Pi_{\L,k}f\neq 0                                                                                                                                                     \end{equation}
for such an eigenfunction. Assume that $\dim\ker(\L+\Theta+k)>1$ for some $k\in\N_0.$ Thus we may choose two linearly independent eigenfunctions to the eigenvalue $-k$. Since $\dim\ran\Pi_{\L,k}=1$, we can find a linear combination of these two eigenfunctions, yielding an eigenfunction $f$ which satisfies $\Pi_{\L,k}f=0$. But this contradicts \eqref{3.6} and hence $\dim\ker(\L+\Theta+k)=1$.

For the third result (\ref{ps:iii}) we consider the Fourier transform of the eigenvalue equation $(\L+\Theta)f_k=-k f_k$ for $k\in\N_0$. This yields the following differential equation for $\hat f_k$:
\[\xi\hat f_k'(\xi)=\big(\hat \theta(\xi)+k-\xi^2\big)\hat f_k(\xi).\]
Its general solution reads
\[
 \hat f_k(\xi)=c_k\xi^k q(\xi),\quad\text{with}\quad q(\xi):=\exp\Big(-\frac{\xi^2}2+\int_0^1\frac{\hat\theta(\xi s)}s\d s\Big),
\]
for all $k\in\N_0$, with $c_k\in\C$. We may now fix $c_k:=\ii^k$, which completes the proof.
\end{proof}

\begin{rem}
 According to the results of Proposition \ref{spec_pr} \eqref{ho:ii} we may formally write $\Theta$ and $\L$ as infinite-dimensional matrices with respect to the eigenfunctions $\mu_k, k\in\N_0$. Due to the property $\Theta:\E_k\to\E_{k+1}$ shown in Lemma \ref{remark_theta_inv} this representation of $\Theta$ is {\em strictly lower triangular}. Furthermore, due to Theorem \ref{extension_theorem} (\ref{fp:res:ii}), $\L$ is formally diagonal. And according to Proposition \ref{spec_prop} $\sigma(\L)=\sigma(\L+\Theta)$. This situation resembles the finite-dimensional case, in which adding a strictly triangular matrix does not change the spectrum of a diagonal matrix.
\end{rem}

 \begin{lem}\label{def:spec:proj:pert}
The spectral projection $\mathcal P_k$ of $\L+\Theta$ corresponding to the eigenvalue $-k\in-\N_0$ fulfills \[\ran \mathcal P_k=\spn\{f_k\},\quad\ker\mathcal P_k=\E_{k+1}\oplus\spn\{f_{k-1},\ldots,f_0\},\]
with the eigenfunctions $f_k,\ldots,f_0$ given in (\ref{rec_f_k}). Therefore, all singularities of the resolvent are of order one, and for all $k\in\N_0$ there holds $M(\L+\Theta+k)=\ker(\L+\Theta+k)$. 
\end{lem}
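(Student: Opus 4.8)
The plan is to deduce the whole statement from two facts: that the algebraic eigenspace at $-k$ is exactly $\spn\{f_k\}$ (no generalised eigenvectors), and that $\E$ splits as a reducing sum of $\spn\{f_k\}$ and $V:=\E_{k+1}\oplus\spn\{f_0,\dots,f_{k-1}\}$. Since $\L+\Theta$ has compact resolvent (Lemma \ref{sigma_perturb}), every $-k\in\sigma(\L+\Theta)$ is a pole of the resolvent; the range $\ran\mathcal P_k$ equals the algebraic eigenspace $M(\L+\Theta+k)$, which is finite-dimensional, and the pole has order one precisely when $M(\L+\Theta+k)=\ker(\L+\Theta+k)$, cf.\ \cite{kato}. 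Hence proving $M(\L+\Theta+k)=\ker(\L+\Theta+k)=\spn\{f_k\}$ simultaneously yields the assertion $\ran\mathcal P_k=\spn\{f_k\}$, that all resolvent singularities have order one, and that $M(\L+\Theta+k)=\ker(\L+\Theta+k)$.

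For the absence of generalised eigenvectors I would exploit the grading by the filtration $\E\supset\E_1\supset\E_2\supset\cdots$, which has trivial intersection (Proposition \ref{pert:spec}): every nonzero $g$ lies in a unique $\E_m\setminus\E_{m+1}$, with $m$ the order of vanishing of $\hat g$ at the origin (Proposition \ref{char:e_k}). A short computation using the Fourier representation of $\mathfrak L$, together with $\Theta\colon\E_m\to\E_{m+1}$ (Lemma \ref{remark_theta_inv}), shows $(\L+\Theta+m)g\in\E_{m+1}$ for $g\in D(\L+\Theta)\cap\E_m$; equivalently, $\L+\Theta$ acts as $-m$ on the one-dimensional quotient $\E_m/\E_{m+1}$. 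Now suppose $(\L+\Theta+k)g=c f_k$ with $c\neq0$: since $f_k\in\E_k\setminus\E_{k+1}$, invariance of $\E_{k+1}$ rules out $g\in\E_{k+1}$, while comparing orders of vanishing at the origin rules out $m(g)<k$, leaving $m(g)=k$. Writing $g=af_k+h$ with $h\in\E_{k+1}$ (from $\E_k=\E_{k+1}\oplus\spn\{f_k\}$) gives $(\L+\Theta+k)h=cf_k\in\E_{k+1}$, forcing $c=0$, a contradiction. Thus $\ker(\L+\Theta+k)^2=\ker(\L+\Theta+k)$ and the claim on the range follows.

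For the kernel I would identify $V$ as the complementary spectral subspace. It is closed and $(\L+\Theta)$-invariant (a finite span of eigenfunctions added to the invariant $\E_{k+1}$, Corollary \ref{cor:inv_subsp}); and since $\hat f_j(\xi)=\ii^j\xi^jq(\xi)$ with $q(0)=1$ vanishes to exactly order $j$, the map $f\mapsto(\hat f(0),\dots,\hat f^{(k)}(0))$ shows $\{f_0,\dots,f_k\}$ is a basis of a complement of $\E_{k+1}$, so $\E=\spn\{f_k\}\oplus V$ by Theorem \ref{extension_theorem}. The bounded projection $P$ onto $\spn\{f_k\}$ along $V$ then commutes with $\L+\Theta$ on $D(\L+\Theta)$, hence with $R_{\L+\Theta}(\zeta)$ and with $\mathcal P_k$; splitting the contour integral \eqref{def:spec_proj} over the two reducing summands and using $\sigma((\L+\Theta)|_{\spn\{f_k\}})=\{-k\}$ together with $\sigma((\L+\Theta)|_V)=-\N_0\setminus\{-k\}$ gives $\mathcal P_k=P$, i.e.\ $\ker\mathcal P_k=V$.

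The main obstacle is the spectral identity $\sigma((\L+\Theta)|_{\E_{k+1}})=\{-(k+1),-(k+2),\dots\}$ underlying $\sigma((\L+\Theta)|_V)=-\N_0\setminus\{-k\}$: invariance of the closed, infinite-dimensional subspace $\E_{k+1}$ under the unbounded operator does not by itself make the resolvent leave $\E_{k+1}$ invariant. I would circumvent this by noting that the bounded projection onto $\E_{k+1}$ along $\spn\{f_0,\dots,f_k\}$ commutes with $\L+\Theta$ and therefore with $R_{\L+\Theta}(\zeta)$, so that $R_{\L+\Theta}(\zeta)$ does preserve $\E_{k+1}$ and restricts there to a compact resolvent for $(\L+\Theta)|_{\E_{k+1}}$. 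Its spectrum is then pure point and, the eigenspaces being one-dimensional (Proposition \ref{pert:spec}), consists of exactly those $-j$ with $f_j\in\E_{k+1}$, that is $j\ge k+1$.
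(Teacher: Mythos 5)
Your proof is correct, and it rests on the same decomposition as the paper's: $\E=\spn\{f_k\}\oplus\mathcal K_k$ with $\mathcal K_k:=\E_{k+1}\oplus\spn\{f_0,\ldots,f_{k-1}\}$, both summands being $(\L+\Theta)$-invariant. The differences lie in how the two conclusions are extracted. For $M(\L+\Theta+k)=\ker(\L+\Theta+k)$ the paper argues in one line: if $(\L+\Theta+k)g=cf_k$, decompose $g=af_k+h$ with $h\in\mathcal K_k$; then $(\L+\Theta+k)g=(\L+\Theta+k)h\in\mathcal K_k$ by invariance, while $f_k\notin\mathcal K_k$, forcing $c=0$. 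Your order-of-vanishing argument through the filtration $\E\supset\E_1\supset\E_2\supset\cdots$ reaches the same conclusion; note that your final step ($g=af_k+h$, $h\in\E_{k+1}$) is essentially the paper's argument localized to $\E_k$, so your preliminary elimination of the cases $m(g)\neq k$ is dispensable, though harmless. For the identification $\ker\mathcal P_k=\mathcal K_k$ the paper invokes its appendix Lemma \ref{uniqueness}, whereas you reprove its content by hand, splitting the Dunford integral \eqref{def:spec_proj} over the reducing pair $(\spn\{f_k\},\mathcal K_k)$ via the bounded projection $P$ — the same mechanism as in the lemma's proof. The genuine added value of your write-up is the last paragraph: the paper asserts $\sigma((\L+\Theta)|_{\mathcal K_k})=-\N_0\setminus\{-k\}$ without justification (and this is precisely hypothesis (i) of Lemma \ref{uniqueness}), while you prove it — the bounded projection onto $\E_{k+1}$ along $\spn\{f_0,\ldots,f_k\}$ commutes with $\L+\Theta$, hence with $R_{\L+\Theta}(\zeta)$, so $R_{\L+\Theta}(\zeta)$ preserves $\E_{k+1}$ and the restriction inherits a compact resolvent; its spectrum is then exactly $\{-j:j\ge k+1\}$ because the eigenspaces of $\L+\Theta$ are the one-dimensional $\spn\{f_j\}$ and $\hat f_j$ vanishes at the origin to exact order $j$, so $f_j\in\E_{k+1}$ iff $j\ge k+1$. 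Finally, your appeal to Riesz theory for operators with compact resolvent (each spectral point a pole, $\ran\mathcal P_k=M(\L+\Theta+k)$ finite-dimensional, pole of order one iff $M=\ker$) substitutes for the paper's use of Propositions \ref{fds} and \ref{spec_prop} (\ref{itemiii})--(\ref{itemiv}); both routes are legitimate, and yours is the more self-contained at exactly the point where the paper is terse.
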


\begin{proof}
The set $\mathcal{K}_k:=\E_{k+1}\oplus\spn\{f_{k-1},\ldots,f_0\}$ is invariant under $\L+\Theta$, cf.~Corollary \ref{cor:inv_subsp}. Therefore the algebraic eigenspace satisfies  $M(\L+\Theta+k)=\ker(\L+\Theta+k)=\spn\{f_k\}$, being the complement of $\mathcal K_k$. In particular we obtain the $(\L+\Theta)$-invariant decomposition $\E=\mathcal K_k\oplus M(\L+\Theta+k)$, and $\sigma((\L+\Theta)|_{\mathcal K_k})=-\N_0\backslash\{-k\}$. So we can apply Lemma \ref{uniqueness} from the appendix, which yields the properties of the spectral projections.

Since $\dim \mathcal P_k=1$ and $M(\L+\Theta+k)=\ker(\L+\Theta+k)$, the singularity of $R_{\L+\Theta}(\zeta)$ at $\zeta=-k$ is a pole of order one, see Proposition \ref{spec_prop} (\ref{itemiii})-(\ref{itemiv}).
\end{proof}

Having explicitly determined the spectrum of the perturbed Fokker-Planck operator, we now turn to the generated semigroup and the corresponding decay rates. We start with the fact that $\L+\Theta$ generates a $C_0$-semigroup:

\begin{prop}\label{prop_3_11}
For each $k\in\N_0$ the operator $(\L+\Theta)|_{\E_k}$ is the infinitesimal generator of a $C_0$-semigroup on $\E_k$. The
semigroup on $\E$ preserves mass, i.e.~
\[\int_\R f(x)\d x=\int_\R [\e^{t(\L+\Theta)}f](x)\d x,\quad\forall t\ge 0.\]
\end{prop}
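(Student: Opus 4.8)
The plan is to prove the statement in two independent parts. For the generation claim, I would invoke a bounded-perturbation theorem for $C_0$-semigroup generators. From Theorem~\ref{extension_theorem}~(\ref{fp:res:iv}) we know that $\L$ generates a $C_0$-semigroup on each $\E_k$, and by Corollary~\ref{lemma:bdd_theta} the perturbation $\Theta$ is a bounded operator on $\E$ (hence on each closed $\L$-invariant subspace $\E_k$, using Corollary~\ref{cor:inv_subsp}). The bounded perturbation theorem (e.g.\ \cite[Theorem~III.1.3]{engel} or \cite[Theorem~3.1.1]{pazy}) then immediately gives that $(\L+\Theta)|_{\E_k}$ generates a $C_0$-semigroup on $\E_k$ for every $k\in\N_0$. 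The main point to check is that $\E_k$ is genuinely invariant for the sum operator and that the restriction of $\Theta$ to $\E_k$ is well-defined and bounded, both of which are already secured by Corollary~\ref{cor:inv_subsp} and Lemma~\ref{remark_theta_inv}.

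For the mass-preservation claim, the key structural fact is that $\Theta$ has zero mean, i.e.\ $\hat\theta(0)=0$ by condition~\textbf{(C)(ii)}. The plan is to show that the mass functional $\psi_0:f\mapsto\int_\R f\d x=\hat f(0)$ is constant along trajectories. First I would verify this on the generator level: for $f$ in the domain of $\L+\Theta$ one has $\psi_0((\L+\Theta)f)=0$. Indeed $\reallywidehat{\mathfrak Lf}(\xi)=-\xi\hat f'(\xi)-\xi^2\hat f(\xi)$ (the Fourier transform of $f''+xf'+f$), which vanishes at $\xi=0$, and $\widehat{\Theta f}(0)=\hat\theta(0)\hat f(0)=0$. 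Hence the time derivative $\frac{\d}{\d t}\psi_0(\e^{t(\L+\Theta)}f)=\psi_0((\L+\Theta)\e^{t(\L+\Theta)}f)=0$ for $f\in D(\L+\Theta)$, so $\psi_0$ is conserved on the dense domain.

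It then remains to extend the conservation from $D(\L+\Theta)$ to all of $\E$ by a density and continuity argument. Since $\psi_0$ is a continuous linear functional on $\E$ (as noted in the proof of Proposition~\ref{char:e_k}) and $\e^{t(\L+\Theta)}$ is a bounded operator on $\E$ for each fixed $t$, the map $f\mapsto\psi_0(\e^{t(\L+\Theta)}f)-\psi_0(f)$ is continuous on $\E$; vanishing on the dense subspace $D(\L+\Theta)$ forces it to vanish everywhere. This yields $\int_\R f\d x=\int_\R[\e^{t(\L+\Theta)}f]\d x$ for all $f\in\E$ and all $t\ge0$.

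I expect the generation part to be essentially automatic once the bounded-perturbation theorem is cited, so the only delicate point is the mass conservation. The main obstacle there is the justification of differentiating $t\mapsto\psi_0(\e^{t(\L+\Theta)}f)$ under the functional and interchanging $\psi_0$ with the generator; this is clean for $f\in D(\L+\Theta)$ because $t\mapsto\e^{t(\L+\Theta)}f$ is then continuously differentiable in $\E$ with derivative $(\L+\Theta)\e^{t(\L+\Theta)}f$, and $\psi_0$ is continuous. The computation $\widehat{\mathfrak Lf}(0)=0$ should be checked carefully, since $\hat f'(0)$ appears but is multiplied by $\xi$ and so drops out at the origin; one must confirm that $\hat f$ is differentiable at $0$, which follows from the analyticity of $\hat f$ in $\Omega_{\beta/2}$ guaranteed by Lemma~\ref{analyticity}.
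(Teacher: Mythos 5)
Your proposal is correct, and the generation part coincides with the paper's proof: the same bounded-perturbation theorem applied to $\L|_{\E_k}$ (via Theorem \ref{extension_theorem} (\ref{fp:res:iv})) with $\Theta|_{\E_k}\in\BB(\E_k)$ (via Corollary \ref{lemma:bdd_theta} and Lemma \ref{remark_theta_inv}). For the mass conservation, however, you take a genuinely different route. The paper argues purely structurally with the spectral projection $\mathcal P_0$ from Lemma \ref{def:spec:proj:pert}: since $\E=\E_1\oplus\spn\{f_0\}$ with $\E_1$ exactly the massless subspace, both summands invariant under the semigroup, $\mathcal P_0$ commutes with $\e^{t(\L+\Theta)}$; and since $\mathcal P_0 f\in\ker(\L+\Theta)$ is a fixed point of the semigroup, $\mathcal P_0\e^{t(\L+\Theta)}f=\mathcal P_0 f$, which determines the mass. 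You instead verify conservation at the generator level, computing $\widehat{\mathfrak Lf}(\xi)=-\xi^2\hat f(\xi)-\xi\hat f'(\xi)$ (which is the correct Fourier identity under the paper's convention and vanishes at $\xi=0$ by the analyticity of $\hat f$ from Lemma \ref{analyticity}) together with $\widehat{\Theta f}(0)=\hat\theta(0)\hat f(0)=0$ from \textbf{(C)(ii)}, then differentiate $t\mapsto\psi_0(\e^{t(\L+\Theta)}f)$ on $D(\L+\Theta)$ and extend by density, using the continuity of $\psi_0$ on $\E$ (noted in the proof of Proposition \ref{char:e_k}) and the boundedness of $\e^{t(\L+\Theta)}$ for each fixed $t$. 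All the steps you flag as delicate do hold: $t\mapsto\e^{t(\L+\Theta)}f$ is indeed $C^1$ for $f$ in the domain, and $\hat f$ is differentiable at $0$. The trade-off: your argument is more elementary and self-contained, needing only $\hat\theta(0)=0$ and none of the spectral analysis of the perturbed operator (Proposition \ref{pert:spec}, Lemma \ref{def:spec:proj:pert}), so it would survive even in settings where the spectrum is unknown; the paper's argument is nearly free given the machinery already built, and it yields the sharper structural fact $\mathcal P_0 f=mf_0$ that is immediately reused in Remark \ref{conv_to_f_0} to identify the limit state.
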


\begin{proof}
According to Theorem \ref{extension_theorem} the operator $\L$ generates a $C_0$-semigroup on $\E_k$ for every $k\in\N_0$, and due to Lemma \ref{remark_theta_inv} and Corollary \ref{lemma:bdd_theta} we have $\Theta|_{\E_k}\in\BB(\E_k)$. Now a bounded perturbation of the infinitesimal generator of a $C_0$-semigroup is again infinitesimal generator, see \cite[Theorem III.1.3]{engel}, and so the first result follows.

To show the conservation of mass we use the decomposition of $(\e^{t(\L+\Theta)})_{t\ge 0}$ by $\mathcal P_0$ corresponding to $\E=\E_1\oplus\spn\{f_0\}$. The space $\E_1$ consists of all massless functions, so the part $\mathcal P_0f$ alone determines the mass of any $f\in\E$. Since $\E_1$ and $\spn\{ f_0\}$ are both invariant under the semigroup, $\mathcal P_0$ and $(\e^{t(\L+\Theta)})_{t\ge 0}$ commute. Furthermore we have $\mathcal P_0 f\in\ker(\L+\Theta)$, and hence $\e^{t(\L+\Theta)}\mathcal P_0 f=\mathcal P_0 f$ for all $t\ge 0$. Altogether we obtain $\mathcal P_0 \e^{t(\L+\Theta)}f=\mathcal P_0 f$ for all $f\in\E,\,t\ge 0$, i.e.~the semigroup preserves mass.
\end{proof}

Next we investigate the decay rate of $(\e^{t(\L+\Theta)})_{t\ge 0}$ on the subspaces $\E_k$. To this end we define:
\begin{equation}
\hat\psi(\xi):=\exp\Big(\int_0^1\frac{\hat\theta(\xi s)}s\d s\Big),\quad\xi\in\Omega_{\beta/2},\label{last_ref} 
\end{equation}

which is analytic in $\Omega_{\beta/2}$ according to Remark \ref{rem31}.
\begin{lem}\label{eigs:Psi}
The map $\Psi:f\mapsto f*\psi$ has the properties:
\begin{enumerate}\renewcommand{\theenumi}{\roman{enumi}}
\renewcommand{\labelenumi}{(\theenumi)}
 \item\label{klam_i} For each $k\in\N_0$, $\Psi:\E_k\to\E_k$ is a bijection, with inverse $\Psi^{-1}:f\mapsto f*\F^{-1}[1/\hat\psi]$.
 \item $\Psi,\Psi^{-1}\in\BB(\E)$.
\end{enumerate}
\end{lem}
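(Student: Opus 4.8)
The plan is to exploit that $\Psi$ is a convolution, so on the Fourier side it acts as multiplication by $\hat\psi$, and then to run essentially the same multiplier argument as in Corollary \ref{lemma:bdd_theta}. The first step is to establish the two-sided bound $\hat\psi,\,1/\hat\psi\in L^\infty(\Omega_{\beta/2})$. Writing $|\hat\psi(\xi)|=\exp\big(\Re\int_0^1\hat\theta(\xi s)/s\,\d s\big)$ and $|1/\hat\psi(\xi)|=\exp\big(-\Re\int_0^1\hat\theta(\xi s)/s\,\d s\big)$, condition {\bf (C)(iii)} — which asserts that $\xi\mapsto\Re\int_0^1\hat\theta(\xi s)/s\,\d s$ is essentially bounded on $\Omega_{\beta/2}$ — yields uniform bounds on both quantities at once. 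By Remark \ref{rem31}, both $\hat\psi$ and $1/\hat\psi=\exp\big(-\int_0^1\hat\theta(\xi s)/s\,\d s\big)$ are analytic in $\Omega_{\beta/2}$.

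For the boundedness claim (ii), I would argue as follows. For $f\in\E$ the product $\widehat{\Psi f}=\hat\psi\hat f$ is analytic in $\Omega_{\beta/2}$, and since $\hat\psi$ is bounded while $\hat f$ satisfies \eqref{seid}, the product satisfies \eqref{seid} as well; by Lemma \ref{analyticity} this gives $\Psi f\in\E$. Using the norm $\nn\cdot\nn_\omega$ from \eqref{norm_w} and passing to the boundary values as in \eqref{fthet}, one estimates $\nn\Psi f\nn_\omega^2\le\|\hat\psi\|_{L^\infty(\Omega_{\beta/2})}^2\,\nn f\nn_\omega^2$, exactly as in the proof of Corollary \ref{lemma:bdd_theta}. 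The identical computation with $1/\hat\psi$ in place of $\hat\psi$ shows that the candidate inverse $f\mapsto f*\F^{-1}[1/\hat\psi]$ also lies in $\BB(\E)$.

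For invariance and bijectivity (i), I would invoke the characterization \eqref{f_char_e_k}: $f\in\E_k$ iff $\xi=0$ is a zero of $\hat f$ of order at least $k$. Since $\hat\psi$ is nowhere vanishing in $\Omega_{\beta/2}$ (being an exponential) with $\hat\psi(0)=\exp(0)=1$ — using $\hat\theta(0)=0$ from {\bf (C)(ii)} — multiplication by $\hat\psi$ preserves the order of the zero at $\xi=0$, and the same holds for $1/\hat\psi$. Hence both $\Psi$ and $\Psi^{-1}$ map $\E_k$ into $\E_k$ for every $k\in\N_0$. Finally, on the Fourier side the compositions $\Psi\Psi^{-1}$ and $\Psi^{-1}\Psi$ act as multiplication by $\hat\psi\cdot(1/\hat\psi)=1$, so they coincide with $\id$ on $\E$; restricting to $\E_k$ shows $\Psi\colon\E_k\to\E_k$ is a bijection with the stated inverse.

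The main obstacle is the first step: recognizing that condition {\bf (C)(iii)} is precisely what is needed to bound both $\hat\psi$ and its reciprocal in $L^\infty(\Omega_{\beta/2})$. Once this observation is in place, boundedness reduces to the multiplier estimate of Corollary \ref{lemma:bdd_theta}, and the subspace invariance together with bijectivity follow routinely from the order-of-zero characterization of $\E_k$ and the fact that $\hat\psi$ is an analytic, nowhere-vanishing multiplier normalized by $\hat\psi(0)=1$.
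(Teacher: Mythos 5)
Your proposal is correct and follows essentially the same route as the paper's own proof: define the candidate inverse via the multiplier $1/\hat\psi$, use condition {\bf (C)(iii)} to get $\Psi f,\Psi^{-1}f\in\E$ with boundedness exactly as in Corollary \ref{lemma:bdd_theta}, derive the $\E_k$-invariance from the order-of-zero characterization of Proposition \ref{char:e_k}, and verify $\Psi\circ\Psi^{-1}=\Psi^{-1}\circ\Psi=\id$ on the Fourier side. You merely make explicit a detail the paper leaves implicit, namely that $|\hat\psi^{\pm1}(\xi)|=\exp\bigl(\pm\Re\int_0^1\hat\theta(\xi s)/s\d s\bigr)$, so the essential boundedness in {\bf (C)(iii)} controls $\hat\psi$ and $1/\hat\psi$ simultaneously.
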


\begin{proof}
We define $\bar\Psi:f\mapsto f*\F^{-1}[1/\hat\psi]$. Due to the condition {\bf (C)(iii)} there holds $\Psi f,\bar\Psi f\in \E$ for all $f\in\E$, which is shown analogously to Lemma \ref{thetaeine}. Let now $f\in\E_k$ for some $k\in\N_0$. Then $\hat f(\xi)$ has a zero of order greater or equal to $k$ at $\xi=0$, cf.~Proposition \ref{char:e_k}. Since $\hat\psi$ and $1/\hat\psi$ are analytic in $\Omega_{\beta/2}$, the zero at $\xi=0$ of $\F_{x\to\xi}\Psi f=\hat f(\xi)\hat\psi(\xi)$ and of $\F_{x\to\xi}\bar\Psi f=\hat f(\xi)/\hat\psi(\xi)$ is of the same order as of $\hat f$. So $\Psi,\bar\Psi:\E_k\to\E_k$ for all $k\in\N_0$.

By applying the Fourier transform, we see that $\Psi\circ\bar\Psi f=\bar\Psi\circ\Psi f=f$ for all $f\in\E$, i.e.~$\bar\Psi=\Psi^{-1}$, and $\Psi,\Psi^{-1}:\E_k\to\E_k$ are bijections for all $k\in\N_0$.

Finally, as in Corollary \ref{lemma:bdd_theta} one proves the boundedness of $\Psi$ and $\Psi^{-1}$ by using the assumption {\bf (C)(iii)}.
\end{proof}

The map $\Psi$ plays a crucial role in the analysis of the perturbed Fokker-Planck operator $\L+\Theta$, because it relates the eigenspaces of $\L$ to the eigenspaces of $\L+\Theta$: According to Proposition \ref{pert:spec} we have: 
\begin{equation}\label{psi}
f_k=\Psi\mu_k,\quad k\in\N_0.                                                                                                                            
\end{equation}
By using this property of $\Psi$ we obtain the following result:

\begin{prop}\label{aquiv:reso}
Let $k\in\N_0$ and $\zeta\in\C\backslash\{-k,-k-1,\ldots\}$. Then there holds
\begin{equation}\label{equiv:resolvents}
R_{\L+\Theta}(\zeta)|_{\E_k} =\Psi\circ R_\L(\zeta)\circ\Psi^{-1}|_{\E_k}.
\end{equation}
In particular there exists a constant $\tilde C_k>0$ such that
\begin{equation}\label{reso:est}
 \big\|\big(R_{\L+\Theta}(\zeta)|_{\E_k}\big)^n\big\|_{\BB(\E_k)}\le \frac{\tilde C_k}{(\Re \zeta+k)^n},\quad \Re\zeta>{-k},\, n\in\N.
\end{equation}
\end{prop}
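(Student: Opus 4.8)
The plan is to exploit the fact, visible in \eqref{psi}, that the convolution map $\Psi$ conjugates $\L$ into $\L+\Theta$. Concretely, I would first establish the intertwining relation
\[
(\L+\Theta)\Psi u=\Psi\L u,\qquad u\in D(\L)\cap\E_k,
\]
by passing to the Fourier side. Using $\widehat{\mathfrak L u}(\xi)=-\xi\hat u'(\xi)-\xi^2\hat u(\xi)$, together with $\widehat{\Theta u}=\hat\theta\hat u$ and $\widehat{\Psi u}=\hat\psi\hat u$, the entire identity collapses to the single scalar relation $\xi\hat\psi'(\xi)=\hat\theta(\xi)\hat\psi(\xi)$. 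This follows directly from the definition of $\hat\psi$: differentiating $\int_0^1\hat\theta(\xi s)/s\,\d s$ and substituting $u=\xi s$ yields $\hat\psi'/\hat\psi=(\hat\theta(\xi)-\hat\theta(0))/\xi$, and condition {\bf (C)(ii)} ($\hat\theta(0)=0$) is precisely what cancels the apparent singularity at $\xi=0$. The same Fourier computation also shows $\mathfrak L\Psi u=\Psi\L u-\Theta\Psi u\in\E$, so that $\Psi u\in D(\L)=D(\L+\Theta)$ (the domains coincide because $\Theta\in\BB(\E)$ by Corollary \ref{lemma:bdd_theta}), and $\Psi u\in\E_k$ by Lemma \ref{eigs:Psi}.

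Given the intertwining relation, the resolvent identity \eqref{equiv:resolvents} follows by direct verification. The restriction to $\E_k$ is essential here: since $\sigma(\L|_{\E_k})=\{-k,-k-1,\ldots\}$ (Proposition \ref{spec_pr}), the hypothesis $\zeta\notin\{-k,-k-1,\ldots\}$ makes $R_\L(\zeta)|_{\E_k}$ well-defined even for $\zeta\in\sigma(\L)$. For $g\in\E_k$ I would set $h:=\Psi^{-1}g\in\E_k$ (bijectivity from Lemma \ref{eigs:Psi}), $u:=R_\L(\zeta)h\in D(\L)\cap\E_k$, and $f:=\Psi u$. Then
\[
(\zeta-(\L+\Theta))f=\Psi(\zeta-\L)u=\Psi h=g,
\]
so $f=R_{\L+\Theta}(\zeta)g=\Psi R_\L(\zeta)\Psi^{-1}g$, as claimed.

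For the estimate \eqref{reso:est} I would iterate \eqref{equiv:resolvents}: the inner factors $\Psi^{-1}\Psi$ telescope, giving $(R_{\L+\Theta}(\zeta)|_{\E_k})^n=\Psi\,(R_\L(\zeta)|_{\E_k})^n\,\Psi^{-1}$, whence
\[
\big\|(R_{\L+\Theta}(\zeta)|_{\E_k})^n\big\|_{\BB(\E_k)}\le\|\Psi\|_{\BB(\E)}\|\Psi^{-1}\|_{\BB(\E)}\,\big\|(R_\L(\zeta)|_{\E_k})^n\big\|_{\BB(\E_k)}.
\]
It remains to bound the powers of the unperturbed resolvent, for which I would use the Laplace representation $R_\L(\zeta)^n=\frac1{(n-1)!}\int_0^\infty t^{n-1}\e^{-\zeta t}\e^{t\L}\,\d t$ on $\E_k$, valid for $\Re\zeta>-k$, together with the decay bound $\|\e^{t\L}|_{\E_k}\|_{\BB(\E_k)}\le C_k\e^{-kt}$ from Theorem \ref{extension_theorem}\eqref{fp:res:iv}. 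Evaluating $\int_0^\infty t^{n-1}\e^{-(\Re\zeta+k)t}\,\d t=(n-1)!/(\Re\zeta+k)^n$ gives $\|(R_\L(\zeta)|_{\E_k})^n\|_{\BB(\E_k)}\le C_k/(\Re\zeta+k)^n$, and setting $\tilde C_k:=C_k\|\Psi\|_{\BB(\E)}\|\Psi^{-1}\|_{\BB(\E)}$ completes the proof.

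The step requiring the most care is the rigorous justification of the intertwining identity for the unbounded operator $\L$: one must confirm that the product-rule manipulation of $\widehat{\Psi u}=\hat\psi\hat u$ is legitimate and that no boundary term is lost, so that $\Psi$ genuinely maps $D(\L)\cap\E_k$ into itself with $(\L+\Theta)\Psi u=\Psi\L u$. The analyticity and boundedness of $\hat\psi$ on $\Omega_{\beta/2}$ (Remark \ref{rem31} and Lemma \ref{eigs:Psi}) are what make this rigorous, but the domain bookkeeping is where the argument must be handled precisely. Once this is secured, the resolvent identity and the power estimate are routine.
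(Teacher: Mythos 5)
Your proof is correct, but it reaches the key identity \eqref{equiv:resolvents} by a genuinely different route than the paper. The paper never establishes the operator intertwining $(\L+\Theta)\Psi=\Psi\L$; instead it verifies \eqref{equiv:resolvents} directly on eigenfunctions, writing $R_\L(\zeta)\mu_j=\mu_j/(\zeta+j)=\Psi^{-1}\circ R_{\L+\Theta}(\zeta)\circ\Psi\,\mu_j$ for $j\ge k$ via the relation $f_j=\Psi\mu_j$ from \eqref{psi}, and then extends from the dense subspace $\spn\{\mu_j:j\ge k\}\subset\E_k$ by boundedness of all operators involved. That argument is shorter and entirely sidesteps the multiplier calculus and domain bookkeeping you correctly flag as the delicate step, since it uses only facts already on record (Propositions \ref{spec_pr} and \ref{pert:spec}, Lemma \ref{eigs:Psi}). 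Your Fourier-side computation, which reduces everything to the scalar ODE $\xi\hat\psi'(\xi)=\hat\theta(\xi)\hat\psi(\xi)$ (correct, and indeed exactly the relation underlying the formula for $\hat f_0$ in \eqref{rec_f_k}), buys a stronger and more self-contained statement: $\Psi$ conjugates the generators themselves on $D(\L)\cap\E_k$, without appealing to the prior computation of the eigenfunctions; the price is justifying that $\hat\psi$ is a legitimate multiplier and that $\Psi$ maps $D(\L)\cap\E_k$ into $D(\L+\Theta)\cap\E_k$, which you handle adequately via $\mathfrak L\Psi u=\Psi\L u-\Theta\Psi u\in\E$. One point to tighten: for $\zeta\in\{0,-1,\ldots,-k+1\}$ (possible when $k\ge1$) your construction shows $\zeta-(\L+\Theta)$ is surjective from $D(\L+\Theta)\cap\E_k$ onto $\E_k$, but to write $f=R_{\L+\Theta}(\zeta)|_{\E_k}\,g$ you should also record injectivity on $\E_k$; this is one line, since by Proposition \ref{pert:spec} the only candidate kernel element is $f_{-\zeta}$, whose Fourier transform vanishes at $0$ to order exactly $-\zeta<k$, so $f_{-\zeta}\notin\E_k$ by \eqref{f_char_e_k}. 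Finally, your Laplace representation of $R_\L(\zeta)^n$ on $\E_k$ combined with Theorem \ref{extension_theorem} \eqref{fp:res:iv} is precisely the content of the Hille--Yosida step the paper invokes, so the estimate \eqref{reso:est} is obtained identically in both proofs.
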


\begin{proof}
We fix $k\in\N_0$. Then for all $j\ge k$ and $\zeta\in\C\backslash\{-k,-k-1,\ldots\}$ there holds due to (\ref{psi}):
\[
R_\L(\zeta)\mu_j=\frac{\mu_j}{\zeta+j}=\Psi^{-1}\circ R_{\L+\Theta}(\zeta)f_j= \Psi^{-1}\circ R_{\L+\Theta}(\zeta)\circ\Psi\mu_j.
\]
So we have $ R_\L(\zeta)=\Psi^{-1}\circ R_{\L+\Theta}(\zeta)\circ\Psi$ in the space $\spn\{\mu_j:j\ge k\}\subset E_k$, which is dense in $\E_k$. Then this identity extends to $\E_k$ due to the continuity of the occurring operators.

In order to prove the resolvent estimate (\ref{reso:est}) we use 
\[\big(R_{\L+\Theta}(\zeta)|_{\E_k}\big)^n=R_{\L+\Theta}(\zeta)^n|_{\E_k}=\Psi\circ R_{\L}(\zeta)^n\circ \Psi^{-1}|_{\E_k},\]
which follows from (\ref{equiv:resolvents}) and Lemma \ref{eigs:Psi} (\ref{klam_i}). Because of $\Psi,\Psi^{-1}\in\BB(\E_k)$ we conclude
\begin{equation}\label{circ_estim}
 \big\|\big(R_{\L+\Theta}(\zeta)|_{\E_k}\big)^n\big\|_{\BB(\E_k)}\le \|\Psi\|_{\BB(\E_k)} \big\|\big(R_{\L}(\zeta)|_{\E_k}\big)^n\big\|_{\BB(\E_k)}\|\Psi^{-1}\|_{\BB(\E_k)}.
\end{equation}
Due to the semigroup estimate in Theorem \ref{extension_theorem} (\ref{fp:res:iv}) there holds
\[
 \big\|\big(R_{\L}(\zeta)|_{\E_k}\big)^n\big\|_{\BB(\E_k)}\le \frac{C_k}{(\Re \zeta+k)^n},\quad \Re \zeta>-k,\,n\in\N,
\]
according to the Hille-Yosida theorem. Inserting this estimate in (\ref{circ_estim}) shows (\ref{reso:est}).
\end{proof}

% 
% \begin{rem}\?
%  Since $\Psi,\Psi^{-1}\in\BB(\E)$, the norm $\|\Psi(\cdot)\|_\omega$ is equivalent to $\|\cdot\|_\omega$ on $\E$. Therefore, the map $\Psi:(\E,\|\Psi(\cdot)\|_\omega)\to (\E,\|\cdot\|_\omega)$ is an isometric isomorphism. Thus, according to \eqref{equiv:resolvents} the operator $\L$ in $(\E,\|\Psi(\cdot)\|_\omega)$ is isometrically equivalent to $\L+\Theta$ in $(\E,\|\cdot\|)$.
% \end{rem}

\begin{rem}
 According to \eqref{equiv:resolvents} the operators $\L$ and $\L+\Theta$ are similar:
\[\L+\Theta = \Psi\circ\L\circ\Psi^{-1}.\]
Now we consider the family of operators $(\L(\tau))_{\tau\in\R}:=(\L+\tau\Theta)_{\tau\in\R}$. Clearly, for every $\tau\in\R$ the operators $\L(\tau)$ and $\L(0)=\L$ are similar with the transformation operator $\Psi(\tau)$ defined according to Lemma \ref{eigs:Psi} (where we replace $\theta$ by $\tau\theta$ in \eqref{last_ref}). Therefore, according to \cite{laxx} there exists a family of operators $(B(\tau))_{\tau\in\R}$ such that $(\L(\tau),B(\tau))$ form a {\em Lax pair}, i.e.~they obey
\[\diff{}\tau \L(\tau)=[B(\tau),\L(\tau)],\]
where the right hand side denotes the commutator. Since we explicitly know the transformation operator $\Psi(\tau)$ we can compute $B(\tau)$:
\[Bf:=-\Psi(\tau)\circ\diff{[\Psi(\tau)]^{-1}}{\tau}f=\F^{-1}\Big[\int_0^1\frac{\hat\theta(\xi s)}s\d s\,\hat f\Big],\]
which is independent of $\tau$.
\end{rem}

\begin{cor}
Let $k\in\N_0$. Then there exists a constant $\tilde C_k>0$ such that
\begin{equation}\label{pert_dec_rate}
\big\|\e^{t(\L+\Theta)}|_{\E_k}\big\|_{\BB(\E_k)} \le \tilde C_k\e^{-kt},\quad t\ge 0.
\end{equation}
\end{cor}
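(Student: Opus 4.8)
The plan is to obtain the decay estimate directly from the resolvent bound \eqref{reso:est} via the Hille--Yosida generation theorem. By Proposition \ref{prop_3_11} we already know that $(\L+\Theta)|_{\E_k}$ is the infinitesimal generator of a $C_0$-semigroup on $\E_k$, so it remains only to convert the quantitative resolvent information into a quantitative semigroup bound.

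First I would recall that, by Proposition \ref{aquiv:reso}, the half-plane $\{\Re\zeta>-k\}$ lies in $\rho((\L+\Theta)|_{\E_k})$ and that \eqref{reso:est} provides
\[
\big\|\big(R_{\L+\Theta}(\lambda)|_{\E_k}\big)^n\big\|_{\BB(\E_k)}\le\frac{\tilde C_k}{(\lambda+k)^n},\qquad \lambda>-k,\ n\in\N,
\]
which is exactly the hypothesis of the general Hille--Yosida theorem with growth bound $\omega=-k$ and constant $M=\tilde C_k$ (cf.~\cite{pazy}). Applying that theorem to the generator $(\L+\Theta)|_{\E_k}$ yields at once $\big\|\e^{t(\L+\Theta)}|_{\E_k}\big\|_{\BB(\E_k)}\le\tilde C_k\e^{-kt}$ for all $t\ge0$, which is the claim.

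The hard part has in fact already been dealt with: the entire difficulty was absorbed into Proposition \ref{aquiv:reso}, where the intertwining relation \eqref{equiv:resolvents} reduced the perturbed resolvent powers to those of $\L$ and the boundedness of $\Psi,\Psi^{-1}$ supplied the uniform constant. The only minor point to check here is that \eqref{reso:est}, although stated for all complex $\zeta$ with $\Re\zeta>-k$, in particular holds for real $\lambda>-k$, which is all the Hille--Yosida theorem requires. Equivalently, one could bypass the generation theorem altogether and argue by conjugation: since \eqref{equiv:resolvents} shows that $(\L+\Theta)|_{\E_k}=\Psi\,\L|_{\E_k}\,\Psi^{-1}$, the two generators are similar via the bounded bijection $\Psi$ of $\E_k$ (Lemma \ref{eigs:Psi}), whence $\e^{t(\L+\Theta)}|_{\E_k}=\Psi\,\e^{t\L}|_{\E_k}\,\Psi^{-1}$ and therefore $\big\|\e^{t(\L+\Theta)}|_{\E_k}\big\|_{\BB(\E_k)}\le\|\Psi\|_{\BB(\E_k)}\,C_k\,\e^{-kt}\,\|\Psi^{-1}\|_{\BB(\E_k)}$ by Theorem \ref{extension_theorem}\eqref{fp:res:iv}; this identifies the constant explicitly as $\tilde C_k=\|\Psi\|_{\BB(\E_k)}\,C_k\,\|\Psi^{-1}\|_{\BB(\E_k)}$.
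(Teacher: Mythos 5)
Your main argument is exactly the paper's proof: Corollary \eqref{pert_dec_rate} is obtained by applying the Hille--Yosida theorem (in its $M\neq1$ form) to the resolvent estimate \eqref{reso:est}, and your reading of the hypotheses is correct. Your alternative conjugation argument $\e^{t(\L+\Theta)}|_{\E_k}=\Psi\,\e^{t\L}|_{\E_k}\,\Psi^{-1}$ is also sound and even identifies $\tilde C_k=\|\Psi\|_{\BB(\E_k)}C_k\|\Psi^{-1}\|_{\BB(\E_k)}$ explicitly, but it is only a restatement of what Proposition \ref{aquiv:reso} already encodes, so the proof stands as essentially the same as the paper's.
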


\begin{proof}
The result immediately follows from (\ref{reso:est}) by application of the Hille-Yosida theorem.
\end{proof}

\begin{rem}\label{conv_to_f_0}
The above result implies the exponential convergence of any solution of (\ref{pert_fp}) towards the (appropriately scaled) stationary state: Choose any $f\in\E$. Then there exists a unique constant $m\in\C$ (the ``mass'' of $f$) such that $\mathcal P_0 f=m f_0$. So $f-mf_0=(1-\mathcal P_0)f\in\E_1$, cf.~Lemma \ref{def:spec:proj:pert}, which implies $\e^{t(\L+\Theta)}f-mf_0=\e^{t(\L+\Theta)}(f-mf_0)\in\E_1$ for all $t\ge 0$, due to Proposition \ref{prop_3_11}. With (\ref{pert_dec_rate}) and $k=1$ this implies
\[\|\e^{t(\L+\Theta)}f-mf_0\|_\omega\le \tilde C_1\|f-mf_0\|_\omega\e^{-t},\quad t\ge0.\]
\end{rem}

\begin{rem}
In the one dimensional case we can explicitly compute the Fourier transform of $R_{\L+\Theta}(\zeta)g$, see Proposition \ref{f_trafo_reso}: For any $k\in\N_0$, $\Re\zeta>-k$, and $g\in \E_k$, the unique solution $f\in\E_k$ of $(\zeta-\L-\Theta)f=g$ satisfies
\[
 \hat f(\xi)=\F_{x\to\xi} [R_{\L+\Theta}(\zeta)g]=\hat f_0(\xi)\int_0^1\frac{\hat g(s\xi)}{\hat f_0(s\xi)}s^{\zeta-1}\d s,\quad \xi\in\Omega_{\beta/2},
\]
where $s^{\zeta}=\e^{\zeta\log s}$ and $\log$ is the natural logarithm on $\R^+.$ One can use this representation for an alternative proof of the resolvent estimate (\ref{reso:est}). However, this becomes less convenient in higher dimensions, since it is then not clear how to properly compute the explicit Fourier transform of $R_{\L+\Theta}(\zeta)$.
\end{rem}

Now we summarize our results in the final theorem:

\begin{trm}\label{final_pert_trm}
Let $\E=L^2(\omega)$, where $\omega(x)=\cosh\beta x$, for some $\beta>0$, and let $\Theta$ fulfill the condition {\bf (C)} for this $\beta>0$. Then the perturbed operator $\L+\Theta$ has the following properties in $\E$:
\begin{enumerate}
 \renewcommand{\theenumi}{\roman{enumi}}
\renewcommand{\labelenumi}{(\theenumi)}
\item It has compact resolvent, and $\sigma(\L+\Theta)=\sigma_p(\L+\Theta)=-\N_0$.
\item There holds $M(\L+\Theta+k)=\ker(\L+\Theta+k)=\spn\{f_k\}$, where $f_k$ is the eigenfunction to the eigenvalue $-k$ given by
(\ref{rec_f_k}). The eigenfunctions are related by $f_k=f_0^{(k)}$.
\item The spectral projection $\mathcal P_k$ corresponding to the eigenvalue $-k\in-\N$ fulfills
\[ \ran\mathcal P_k=\spn\{f_k\},\quad\ker\mathcal P_k=\E_{k+1}\oplus\spn\{ f_{k-1},\ldots, f_0\},\]
where the $(\L+\Theta)$-invariant spaces $\E_k$ are explicitly given in (\ref{e_k}). Moreover, $\ran\mathcal P_0=\spn\{f_0\}$ and $\ker\mathcal P_0=\E_1$. % In particular, all singularities of the resolvent are poles of order one.
\item For every $k\in\N_0$, the operator $(\L+\Theta)|_{\E_k}$ generates a $C_0$-semigroup in $\E_k$, denoted by $(\e^{t(\L+\Theta)}|_{\E_k})_{t\ge 0}$, which satisfies the estimate
\[\|\e^{t(\L+\Theta)}|_{\E_k}\|_{\BB(\E_k)}\le\tilde  C_k\e^{-k t},\quad t\ge0,\]
where the constant $\tilde C_k>0$ is independent of $t$.
\end{enumerate}
\end{trm}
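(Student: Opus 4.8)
The plan is to read off all four assertions directly from the results already established in this section, since Theorem \ref{final_pert_trm} merely collects them into a single statement. The genuine analytic work has been done beforehand, so the proof reduces to matching each claim with the proposition that supplies it; first I would simply lay out this correspondence.

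For assertion (i), I would combine Lemma \ref{sigma_perturb}, which gives that $\L+\Theta$ has compact resolvent in $\E$, with Proposition \ref{pert:spec}(\ref{ps:i}), which identifies $\sigma(\L+\Theta)=-\N_0$. Compactness of the resolvent forces every spectral point to be an isolated eigenvalue of finite multiplicity, whence $\sigma(\L+\Theta)=\sigma_p(\L+\Theta)=-\N_0$. For assertions (ii) and (iii) I would invoke Lemma \ref{def:spec:proj:pert}, which already states $M(\L+\Theta+k)=\ker(\L+\Theta+k)=\spn\{f_k\}$ and gives the range and kernel of the spectral projection $\mathcal P_k$; the explicit eigenfunction $f_k$ together with the relation $f_k=f_0^{(k)}$ is \eqref{rec_f_k} from Proposition \ref{pert:spec}(\ref{ps:iii}), and the description of the invariant spaces $\E_k$ via \eqref{e_k} is Proposition \ref{char:e_k}. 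Finally, for assertion (iv) I would cite Proposition \ref{prop_3_11} for the generation of a $C_0$-semigroup by $(\L+\Theta)|_{\E_k}$ on each $\E_k$, and the decay bound \eqref{pert_dec_rate} (the corollary following Proposition \ref{aquiv:reso}) for the exponential estimate with rate $-k$.

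Since every ingredient is already proven, there is essentially no obstacle left at this stage; the closest thing to a step requiring care is the deduction in (i) that compactness of the resolvent upgrades $\sigma$ to $\sigma_p$, but this is standard and mirrors the argument in Theorem \ref{trm:compactness}. The substantive difficulties were front-loaded into the earlier results: the isospectrality $\sigma(\L+\Theta)=-\N_0$ in Proposition \ref{pert:spec}, which rested on the ladder operator $\alpha^-$ and the impossibility of an empty spectrum for $(\L+\Theta)|_{\E_{k_0+1}}$, and the decay estimate, which rested on the conjugation identity $R_{\L+\Theta}(\zeta)|_{\E_k}=\Psi\circ R_\L(\zeta)\circ\Psi^{-1}|_{\E_k}$ from Proposition \ref{aquiv:reso} together with Hille--Yosida. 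Thus the proof of Theorem \ref{final_pert_trm} is a bookkeeping assembly of these facts, and the main thing to watch is completeness of the citations rather than any new computation.
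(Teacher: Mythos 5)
Your proposal is correct and matches the paper exactly: Theorem \ref{final_pert_trm} is stated there as a summary with no separate proof, and your assembly of Lemma \ref{sigma_perturb}, Proposition \ref{pert:spec}, Lemma \ref{def:spec:proj:pert}, Proposition \ref{char:e_k}, Proposition \ref{prop_3_11}, and the decay estimate \eqref{pert_dec_rate} is precisely the intended bookkeeping. Your one substantive remark --- that compactness of the resolvent upgrades the spectrum to pure point spectrum --- is also the same standard argument the paper uses (via \cite[Theorem III.6.29]{kato} in Theorem \ref{trm:compactness}).
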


\begin{rem}
 Apparently, the particular choice of $\beta>0$ has no influence on the above results, except possibly for the constants $\tilde C_k$. In practice, the constant $\beta$ may therefore be chosen arbitrarily small, such that $\Theta$ satisfies {\bf (C)} for this $\beta$.
\end{rem}

\section{The Higher-Dimensional Case}\label{sec35}

As already mentioned in the introduction, the preceding results can be generalized to higher dimensions without much additional effort. Most proofs are analogous to the ones in the one-dimensional case. Therefore we give here only an outline of the steps leading to the extension of Theorem \ref{final_pert_trm} to higher dimensions.

\medskip

In this section we consider the perturbed Fokker-Planck equation (\ref{pert_fp}) on $\R^d$, where $d\in\N$ is the spatial dimension. Elements of $\R^d$ resp.~$\C^d$ are represented by bold letters, e.g.~$\x\in\R^d,\bfxi\in\C^d$, and we write $\x=(x_1,\ldots,x_d)$. The $\ell^1$-norm is $|\x|:=|x_1|+\cdots+|x_d|$. For a multi-index $\kk\in\N_0^d$ we define $\x^\kk:=x_1^{k_1}\cdots x_d^{k_d}$ and $\kk!:=k_1!\cdots k_d!$. Furthermore
 \[D^\kk:=\frac{\pd^{|\kk|}}{\pd x_1^{k_1}\cdots \pd x_d^{k_d}}.\]
We adopt the notation for weighted Sobolev spaces on $\R^d$ from Section \ref{sec2}, as well as the normalization of the Fourier transform.

We consider the Fokker-Planck operator on $\R^d$ given by
\[L f:=\nabla\cdot\bigg(\mu\nabla\bigg(\frac{f}{\mu}\bigg)\bigg)=\Delta f+\x\cdot\nabla f+df,\]
where $\mu(\x):=\exp(-\x\cdot\x/2)$. The natural space to consider $L$ in is $E:=L^2(1/\mu)$. Since it is isometrically equivalent to the harmonic oscillator $H:=-\Delta- d/2+|\x|^2/4$ in $L^2(\R^d)$, we transfer many results of $H$ (see \cite{parmegg} and \cite[Theorem XIII.67]{resi}) to $L$. In the following we summarize some properties of $L$ in $E$ (see also \cite{Metafune2001,bakry,helffernier}):

\begin{trm}\label{prop:fp_in_H:d}
The Fokker-Planck operator $L$ in $E$ has the following properties:
      \begin{enumerate}
      \renewcommand{\theenumi}{\roman{enumi}}
\renewcommand{\labelenumi}{(\theenumi)}
	\item\label{st_fp:1:d} $L$ with $D(L)=\{f\in E:Lf\in E\}$ is self-adjoint and has a compact resolvent.
	\item\label{st_fp:2:d} The spectrum is $\sigma(L)=-\N_0$, and it consists only of eigenvalues.
	\item\label{st_fp:3:d} For each eigenvalue $-k\in\sigma(L)$ the corresponding eigenspace has the dimension $\binom{k+d-1}{k}$, and it is spanned by the eigenfunctions
	      \[
		\mu_\kk(\x):=\prod_{\ell=1}^d \mu_{k_\ell}(x_\ell),\quad |\kk|=k,
	      \]
	where the $\mu_j$ are defined in Theorem \ref{prop:fp_in_H}.
	\item\label{st_fp:4:d} The eigenfunctions $(\mu_\kk)_{\kk\in\N^d_0}$ form an orthogonal basis of $E$.
	\item\label{st_fp:45:d} The spectral projection $\Pi_{L,k}$ onto the $k$-th eigenspace is given by
	      \[
	       \Pi_{L,k}=\sum_{|\kk|=k}\Pi_{L,\kk},\quad\text{where}\quad \Pi_{L,\kk}:=\frac{(2\pi)^{d/2}}{\kk!}\mu_\kk\la\cdot,\mu_\kk\ra_{E}.
	      \]
	      There holds the spectral representation $L=\sum_{k\in\N_0}-k\Pi_{L,k}.$
	\item\label{st_fp:5:d} The operator $L$ generates a $C_0$-semigroup of contractions on $E_k$ for all $k\in\N_0$, where $E_k:=\ker(\Pi_{L,0}+\cdots+\Pi_{L,k-1}),\,k\ge 1$, and $E_0:=E$. The semigroup satisfies the estimate
	      \[\big\|\e^{tL}|_{E_k}\big\|_{\BB(E_k)}\le\e^{-kt},\quad\forall k\in\N_0.\]
      \end{enumerate}
\end{trm}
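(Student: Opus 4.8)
The plan is to reduce everything to the quantum harmonic oscillator via an explicit unitary transformation, and then to combine the one-dimensional results of Theorem \ref{prop:fp_in_H} with separation of variables to recover the full $d$-dimensional structure. First I would introduce the multiplication operator $U\colon E\to L^2(\R^d)$, $(Uf)(\x):=\mu(\x)^{-1/2}f(\x)$, and note that it is unitary, since $\|Uf\|_{L^2(\R^d)}^2=\int_{\R^d}|f|^2\mu^{-1}\d\x=\|f\|_E^2$. A direct computation using $\nabla\mu=-\x\mu$ shows the intertwining relation $ULU^{-1}=-H$ on $C_0^\infty(\R^d)$, with $H=-\Delta-d/2+|\x|^2/4$. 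Since $H$ is essentially self-adjoint on $C_0^\infty(\R^d)$ and its self-adjoint realization has the maximal domain $\{g\in L^2(\R^d):Hg\in L^2(\R^d)\}$ (see \cite{parmegg} and \cite[Theorem XIII.67]{resi}), conjugation by $U$ transfers this verbatim to $L$, establishing self-adjointness and the domain identification in \eqref{st_fp:1:d}. The resolvent of $H$ is compact because its spectrum is purely discrete, and compactness is preserved under unitary equivalence, completing \eqref{st_fp:1:d}. Claim \eqref{st_fp:2:d} is then immediate from $\sigma(L)=-\sigma(H)=-\N_0$.

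For \eqref{st_fp:3:d}--\eqref{st_fp:45:d} I would exploit that $H=\sum_{\ell=1}^d H_\ell$ separates into one-dimensional oscillators $H_\ell=-\partial_{x_\ell}^2-1/2+x_\ell^2/4$, whose $L^2(\R)$-normalized eigenfunctions are the Hermite functions with eigenvalues $0,1,2,\dots$. By separation of variables the eigenfunctions of $H$ are the tensor products, the eigenvalue of such a product being $|\kk|$, and the multiplicity of the eigenvalue $k$ equals the number of $\kk\in\N_0^d$ with $|\kk|=k$, namely $\binom{k+d-1}{k}$. Applying $U^{-1}=\mu^{1/2}\,\cdot$ carries these eigenfunctions to $\mu_\kk=\prod_{\ell=1}^d\mu_{k_\ell}$, which gives \eqref{st_fp:3:d}. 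Orthogonality and completeness \eqref{st_fp:4:d} follow because the Hermite functions form an orthonormal basis of $L^2(\R^d)$ and $U$ is unitary, so $(\mu_\kk)_{\kk\in\N_0^d}=(U^{-1}\,\text{Hermite})$ is an orthogonal basis of $E$. For the projection formula \eqref{st_fp:45:d} I would set $\Pi_{L,\kk}=U^{-1}\Pi_{H,\kk}U$ and use the one-dimensional normalization $\|\mu_k\|_E^2=k!/\sqrt{2\pi}$ read off from Theorem \ref{prop:fp_in_H} \eqref{st_fp:45}, which multiplies to $\|\mu_\kk\|_E^2=\kk!/(2\pi)^{d/2}$ and yields the stated constant; summing over $|\kk|=k$ gives $\Pi_{L,k}$, and the spectral representation $L=\sum_{k\in\N_0}-k\,\Pi_{L,k}$ is the spectral theorem for the self-adjoint operator $L$.

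For the semigroup estimate \eqref{st_fp:5:d}, since $L$ is self-adjoint with $\sigma(L)\subset(-\infty,0]$, the spectral theorem produces a $C_0$-semigroup of contractions on $E$. The subspace $E_k=\ker(\Pi_{L,0}+\cdots+\Pi_{L,k-1})$ is exactly the spectral subspace of $L$ associated with $\sigma(L)\cap(-\infty,-k]$; it is therefore $L$-invariant, and the functional calculus gives $\|\e^{tL}|_{E_k}\|_{\BB(E_k)}\le\e^{-kt}$.

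I do not expect any deep obstacle here, as the statement is essentially a transfer of known facts along the unitary equivalence; the two points that require genuine care are the domain identification in \eqref{st_fp:1:d} (matching the maximal distributional domain of $L$ with the self-adjoint domain $U^{-1}D(H)$, which rests on the essential self-adjointness of $H$ on $C_0^\infty$) and the combinatorial multiplicity count $\binom{k+d-1}{k}$ together with the verification that the tensor products exhaust each eigenspace, which follows from the completeness of the Hermite basis under separation of variables.
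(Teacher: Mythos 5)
Your proposal is correct and follows exactly the route the paper itself indicates: the paper gives no detailed proof of this theorem, stating only that $L$ in $E$ is isometrically equivalent to the harmonic oscillator $H=-\Delta-d/2+|\x|^2/4$ in $L^2(\R^d)$ and transferring the known spectral facts (citing \cite{parmegg}, \cite[Theorem XIII.67]{resi}, and \cite{Metafune2001,bakry,helffernier}). Your write-up simply makes explicit the unitary map $U=\mu^{-1/2}\cdot$, the separation of variables, the multiplicity count $\binom{k+d-1}{k}$, the normalization $\|\mu_\kk\|_E^2=\kk!/(2\pi)^{d/2}$, and the functional-calculus contraction estimate on $E_k$ — all consistent with the paper's intended argument.
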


The next step is to properly define $L$ in $\E:=L^2(\omega)$ with a weight
 \[\omega(\x)=\sum_{j=1}^d \cosh\beta x_j,\]
  with $\beta>0$. As in the one-dimensional case we have a characterization of $\E$ by the Fourier transform. Due to (a small variant of) \cite[Theorem IX.13]{resi2} we have: There holds $f\in\E$ iff $\hat f$ has an analytic continuation (denoted by $\hat f$ as well) to the set $\Omega_{\beta/2}:=\{\mathbf z\in\C^d:|\Im \mathbf z|<\beta/2\}$ and
\begin{equation}\label{f_in_E:d}
 \sup_{\substack{|\mathbf b|<\beta/2\\ \mathbf b\in\R^d}}\|\hat f(\cdot+\ii\mathbf b)\|_{L^2(\R^d)}<\infty.
\end{equation}

For any $\mathbf b\in\R^d$ with $|\mathbf b|<\beta /2$ we have $\hat f(\bfxi+\ii\mathbf b)=\F_{\x\to\bfxi}\big(\e^{\mathbf b\cdot\x}f(\x)\big).$
The right hand side still makes sense for $|\mathbf b|=\beta/2$ as an $L^2(\R^d)$-function. And according to this identity and Plancherel's formula there holds $\mathbf b\mapsto \hat f(\cdot+\ii\mathbf b)\in C(\overline{B(\beta/2,0)};L^2(\R^d))$, where $B(\beta/2,0):=\{\mathbf b\in\R^d:|\mathbf b|<\beta/2\}$. We can use this fact to define the norm
\begin{equation}\label{f_norm:d}
 \nn f\nn^2_\omega:=\sum_{\ell=1}^d \Big\|\hat f\Big(\bfxi+\ii\frac\beta2\mathbf{e}_\ell\Big)\Big\|^2_{L^2(\R^d_{\bfxi})}+\Big\|\hat f\Big(\bfxi-\ii\frac\beta2\mathbf{e}_\ell\Big)\Big\|^2_{L^2(\R^d_{\bfxi})},
\end{equation}
where $\mathbf{e}_\ell\in\R^d$ is the $\ell$-th unit vector in $\R^d$. The norm $\nn\cdot\nn_\omega$ is equivalent to $\|\cdot\|_\omega$.

In $\E$ there holds a Poincar\'e inequality:

\begin{lem}\label{pr_pc:d}
Tthere exists a constant $C>0$ such that for all $f\in C_0^\infty(\R^d)$:
\begin{equation}\label{poincare:d}
 \|f\|_\omega\le C\|\nabla f\|_\omega.
\end{equation}
\end{lem}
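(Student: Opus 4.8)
The plan is to mimic the one-dimensional Poincaré inequality (\ref{poincare}) on the Fourier side, exploiting that on each of the contours defining the norm $\nn\cdot\nn_\omega$ from (\ref{f_norm:d}) exactly one Fourier variable is bounded away from the origin. Since $\nn\cdot\nn_\omega$ is equivalent to $\|\cdot\|_\omega$, it suffices to prove (\ref{poincare:d}) for $\nn\cdot\nn_\omega$. Using the identity $\hat f(\cdot\pm\ii\frac\beta2\boldsymbol{\delta}_\ell)=\F_{\x\to\bfxi}(\e^{\pm\frac\beta2 x_\ell}f)$ stated before (\ref{f_norm:d}) together with Plancherel's theorem, one has
\[
\nn f\nn_\omega^2 = 2(2\pi)^d\sum_{\ell=1}^d\int_{\R^d}|f(\x)|^2\cosh(\beta x_\ell)\,\d\x,
\]
so the task reduces to estimating each of the $d$ summands $\int_{\R^d}|f|^2\cosh(\beta x_\ell)\,\d\x$ by a constant times $\nn D^\kk f\nn_\omega^2$. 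In Fourier variables $\widehat{D^\kk f}=(\ii\bfxi)^\kk\hat f$, and on the $\ell$-th contour the factor $(\xi_\ell\pm\ii\frac\beta2)$ satisfies $|\xi_\ell\pm\ii\frac\beta2|\ge\frac\beta2$, which is the direct analogue of the lower bound $|\xi|\ge\beta/2$ used in one dimension.

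The accessible core of the estimate is the single-direction case. Freezing all variables except $x_\ell$, the weight $\cosh(\beta x_\ell)$ is a genuine one-dimensional weight in $x_\ell$ that is independent of the remaining variables; hence the argument behind (\ref{poincare}) applies fibre-wise with a constant $C_\beta$ that is uniform in the frozen variables, and integrating them out via Fubini yields the directional estimate
\[
\int_{\R^d}|f|^2\cosh(\beta x_\ell)\,\d\x \le C_\beta\int_{\R^d}|\partial_\ell f|^2\cosh(\beta x_\ell)\,\d\x.
\]
This is exactly the pointwise contour bound $\|\hat f(\cdot\pm\ii\frac\beta2\boldsymbol{\delta}_\ell)\|_{L^2}\le\frac2\beta\|\widehat{\partial_\ell f}(\cdot\pm\ii\frac\beta2\boldsymbol{\delta}_\ell)\|_{L^2}$, and iterating it $k_\ell$ times raises the order to $\partial_\ell^{k_\ell}$ with constant $C_\beta^{k_\ell}$. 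Summing over $\ell$ already delivers the gradient-type inequality $\nn f\nn_\omega\le C\,\nn\nabla f\nn_\omega$ and, more generally, the inequality whenever the relevant derivative acts in the same direction as the weight of the summand.

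The step I expect to be the main obstacle is passing from this per-direction gain to the \emph{single mixed monomial} $D^\kk=\partial_1^{k_1}\cdots\partial_d^{k_d}$. Each summand of $\nn f\nn_\omega^2$ only "sees" the weight $\cosh(\beta x_\ell)$ of one coordinate, and the fibre-wise one-dimensional Poincaré inequality is available solely in the matching direction $\ell$: against a weight that is flat in $x_j$ for $j\neq\ell$ there is no spectral gap, so the non-matching directions cannot be differentiated with a usable lower bound on that contour. To close the argument I would organise the estimate around a distinguished direction with $k_\ell\ge1$ and reduce the general statement to a successive one-dimensional reduction, peeling off the factors of $D^\kk$ one variable at a time and, at each stage, transferring control to the summand whose weight currently dominates, using the analyticity of $\hat f$ in $\Omega_{\beta/2}$ to handle the non-elliptic directions. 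Reassembling the $d$ summands and invoking the norm equivalence then gives $\|f\|_\omega\le C_\kk\|D^\kk f\|_\omega$ with $C_\kk$ depending only on $\beta$ and $\kk$; it is precisely this coupling between the single monomial $D^\kk$ and the sum over coordinate contours that constitutes the delicate part of the proof.
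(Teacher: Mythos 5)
Your fibre-wise directional estimate is correct, and so is the full-gradient inequality $\nn f\nn_\omega\le C\nn\nabla f\nn_\omega$ that it yields; but the step you flag as the ``main obstacle'' is not merely delicate --- within the framework you chose it cannot be closed, because the statement you reduced to is false. By your own Plancherel computation, the contour norm \eqref{f_norm:d} is the $L^2$-norm with weight $\sum_\ell\cosh\beta x_\ell\asymp\e^{\beta\max_\ell|x_\ell|}$, which is \emph{flat} in every non-maximal coordinate direction, and in this norm the single-monomial Poincar\'e inequality already fails for $d=2$, $\kk=(1,0)$. Indeed, take fixed bumps $\phi,v\in C_0^\infty((-1,1))$ and set $f_n(x_1,x_2)=n^{-1/2}\phi(x_1/n)\,v(x_2-2n)$. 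Then $\pd_{x_1}f_n=n^{-3/2}\phi'(x_1/n)\,v(x_2-2n)$, and a direct computation gives
\begin{equation*}
\nn f_n\nn_\omega^2\;\gtrsim\;\int_{\R^2}|f_n|^2\cosh(\beta x_2)\d\x\;\asymp\;\e^{2\beta n},
\qquad
\nn \pd_{x_1}f_n\nn_\omega^2\;\lesssim\; n^{-2}\big(\e^{\beta n}+\e^{2\beta n}\big),
\end{equation*}
since differentiating the wide bump costs a factor $n^{-2}$ in $L^2$, the $\cosh\beta x_1$-contour contributes at most $n^{-2}\e^{\beta n}$, and the $\cosh\beta x_2$-contour is insensitive to $x_1$. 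The ratio blows up like $n^2$, so no constant $C$ and no scheme of ``peeling off'' derivatives and ``transferring control'' between contours can produce $\nn f\nn_\omega\le C_\kk\nn D^\kk f\nn_\omega$ for a mixed monomial. (This does not contradict the lemma itself: for $d\ge2$ the comparison $\|f\|_\omega\le C\nn f\nn_\omega$ holds only after enlarging $\beta$ by a dimensional factor, so the passage to the contour norm is lossy in exactly the direction your reduction requires.)

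The paper's proof avoids this trap by a different choice of equivalent object: it works with the pointwise product weight $\omega_\star(\x)=\cosh\beta x_1\cdots\cosh\beta x_d$, which --- unlike any single summand $\cosh\beta x_\ell$ --- retains full exponential growth in \emph{every} coordinate, uniformly in the remaining ones. For $\kk=(1,0,\ldots,0)$ it applies the one-dimensional inequality \eqref{poincare} not fibre-wise but to the profile $x_1\mapsto\big(\int_{\R^{d-1}}|f(\x)|^2\,\omega_\star(\x)/\cosh\beta x_1\,\d x_2\cdots\d x_d\big)^{1/2}$, and then bounds the derivative of this profile by the corresponding profile of $\pd_{x_1}f$ via Cauchy--Schwarz. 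Because the same full weight $\omega_\star$ appears on both sides of the resulting estimate $\|f\|_{\omega_\star}\le C_\beta\|\pd_{x_1}f\|_{\omega_\star}$, it can be iterated coordinate by coordinate --- $k_1$ times in $x_1$, then $k_2$ times in $x_2$, and so on --- and the mixed monomial $D^\kk$ causes no coupling problem whatsoever. So the one idea missing from your proposal is precisely this replacement of the per-contour weights by a single weight dominating all directions simultaneously; your one-dimensional reduction is sound and close in spirit to the paper's, but without that replacement the step you correctly identified as critical is not just hard --- it is blocked by the counterexample above.
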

For the proof see Appendix \ref{app:proof}. A similar statement is given in \cite[Theorem 14.5]{GOIII}. By using this Poincar\'e inequality we can generalize Lemma \ref{reso_estim}: Let again $\mathfrak L =\Delta+\x\cdot\nabla+d$ be the distributional Fokker-Planck operator. For $f,g\in\E\subset\sw'(\R^d)$ with $(\zeta-\mathfrak L)f=g$ we have the estimate
\begin{equation}\label{djan}
 \|f\|_\varpi+\|\nabla f\|_\omega\le C\|g\|_\omega,
\end{equation}
where $\varpi(\x)=(2\Re\zeta-d)\omega(\x)+\x\cdot\nabla\omega(\x)-\Delta\omega(\x)$, which is a weight function for $\Re\zeta$ sufficiently large. Now we may proceed analogously to the proof of Lemma \ref{abschluss_LL} and show that $\mathfrak L|_{C_0^\infty(\R^d)}$ is closable in $\E$, and its closure $\L$ has the domain $D(\L)=\{f\in\E:\mathfrak L f\in\E\}$. From \cite[Theorem 2.4]{Opic1989} we get the compact embedding $W^{1,2}(\varpi,\omega)\inj\inj\E$, and together with the estimate \eqref{djan} this implies the compactness of the resolvent of $\L$, analogously to Theorem \ref{trm:compactness}. Hence, the spectrum of $\L$ consists only of eigenvalues, and there holds:

\begin{lem}
 In $\E$ we have $\sigma(\L)=-\N_0$. The eigenspaces are still spanned by the $\mu_\mathbf k$.
\end{lem}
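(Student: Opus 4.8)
The plan is to mirror the one-dimensional argument of Proposition~\ref{prop_spec}, replacing the ordinary differential equation for $\hat f$ by a first-order transport equation and the ``branch-matching'' step by a homogeneity argument in several complex variables. First I would Fourier transform the eigenvalue equation $(\zeta-\mathfrak L)f=0$ for $f\in\E$. Using $\F[\Delta f]=-|\bfxi|^2\hat f$ and $\F[\x\cdot\nabla f]=-d\hat f-\bfxi\cdot\nabla\hat f$, the term $df$ cancels and the equation reduces to the first-order linear PDE
\[
 \bfxi\cdot\nabla\hat f(\bfxi)=-\big(\zeta+|\bfxi|^2\big)\hat f(\bfxi).
\]
The operator $\bfxi\cdot\nabla$ is the Euler (radial) field, so the characteristics are the rays through the origin. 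Solving the resulting scalar ODE along a ray $\bfxi=r\boldsymbol\sigma$, with $|\boldsymbol\sigma|=1$ and $r>0$, yields the general solution
\[
 \hat f(\bfxi)=C(\boldsymbol\sigma)\,|\bfxi|^{-\zeta}\mu(\bfxi),\qquad \boldsymbol\sigma=\bfxi/|\bfxi|,
\]
where $C$ is an a priori arbitrary function on the unit sphere and $\mu(\bfxi)=\exp(-|\bfxi|^2/2)$ is entire and nowhere vanishing.

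Second, I would impose the analyticity of $\hat f$ on $\Omega_{\beta/2}$ furnished by the characterization~\eqref{f_in_E:d}. Since $\mu$ is entire and nonzero, $\hat f$ is holomorphic near the origin iff $g(\bfxi):=C(\boldsymbol\sigma)|\bfxi|^{-\zeta}$ is, and by construction $g$ is positively homogeneous of degree $-\zeta$, i.e.\ $g(t\bfxi)=t^{-\zeta}g(\bfxi)$ for $t>0$. Fixing a real $\bfxi\neq0$, the map $t\mapsto g(t\bfxi)$ is holomorphic near $t=0$ yet equals $t^{-\zeta}g(\bfxi)$; comparing this with the Taylor expansion of $g$ at the origin forces $-\zeta\in\N_0$ (otherwise $t^{-\zeta}$ has a branch point or pole at $t=0$ and $g\equiv0$), and, writing $\zeta=-k$, that only the homogeneous part of degree $k$ survives. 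Hence $g=P_k$ is a homogeneous polynomial of degree $k$ and $\hat f=P_k\mu$. This is the exact higher-dimensional counterpart of the one-dimensional branch-matching: restricting $\hat f$ to a complex line $\bfxi=z\mathbf e$ recovers verbatim the situation of Proposition~\ref{prop_spec}, with $C_\pm=C(\pm\mathbf e)$.

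Finally, I would verify the converse and identify the eigenspace. A direct computation using Euler's relation $\bfxi\cdot\nabla P_k=kP_k$ shows that every $P_k\mu$ with $P_k$ homogeneous of degree $k$ solves the transport equation with $\zeta=-k$, hence is an eigenfunction of $\L$ (and lies in $\E$, being a finite linear combination of the $\mu_\kk\in E\subset\E$). Since $\hat\mu_\kk(\bfxi)=\ii^{|\kk|}\bfxi^\kk\mu(\bfxi)$ and the monomials $\{\bfxi^\kk:|\kk|=k\}$ span the homogeneous polynomials of degree $k$, the eigenspace for $-k$ is exactly $\spn\{\mu_\kk:|\kk|=k\}$, of dimension $\binom{k+d-1}{k}$ as in Theorem~\ref{prop:fp_in_H:d}. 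As each $-k$ is thus genuinely an eigenvalue while no other $\zeta$ admits a nontrivial analytic solution, this together with the compactness of the resolvent gives $\sigma(\L)=\sigma_p(\L)=-\N_0$.

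The main obstacle is the passage from the ray solution---whose angular profile $C(\boldsymbol\sigma)$ is an essentially free function on the sphere---to the rigid conclusion that $\hat f$ is a polynomial times a Gaussian. In one dimension the sphere degenerates to two points and analyticity merely equates two constants; in $\R^d$ one must genuinely exploit holomorphy in several variables (homogeneity together with the Taylor expansion at the origin) to collapse the infinite-dimensional solution space of the transport equation onto the finite-dimensional space of homogeneous polynomials.
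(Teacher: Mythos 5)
Your proof is correct, and it follows the same overall route as the paper's sketch: Fourier transform the eigenvalue equation, divide by the Gaussian to reduce to the Euler transport equation (your $g$ is the paper's $\tilde f$ in \eqref{simpl_eveq}), and exploit analyticity of $\hat f$ at the origin. The two arguments diverge only in how that last step is executed. The paper restricts $\tilde f$ to the $d$ coordinate axes and recycles the one-dimensional ODE analysis \eqref{ode:tilde_f} from Appendix \ref{app:b} to force $\zeta\in-\N_0$; then, for $\zeta=-k$, it differentiates \eqref{simpl_eveq} $k$ times so that $\bfxi\cdot\nabla\big(D^\kk\tilde f\big)=0$ for $|\kk|=k$, and continuity at $\bfxi=0$, where all characteristics meet, forces each $D^\kk\tilde f$ to be constant, whence $\tilde f$ is a homogeneous polynomial of degree $k$. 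You instead integrate along all rays to obtain the homogeneity $g(t\bfxi)=t^{-\zeta}g(\bfxi)$ and match it against the Taylor expansion of $g$ at the origin, which quantizes $\zeta$ and collapses $g$ onto its degree-$k$ homogeneous Taylor component in a single stroke. Your variant buys something concrete: the restriction of a nontrivial analytic solution to the coordinate axes could in principle vanish identically (an angular profile $C(\boldsymbol\sigma)$ supported away from the axes), so the paper's axis argument alone does not exclude $\zeta\notin-\N_0$, whereas your all-rays argument has no such loophole and also dispenses with the $k$-fold differentiation; what the paper's version buys is a shorter $d$-dimensional section by literally reusing the one-dimensional computation. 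Your converse step (Euler's relation $\bfxi\cdot\nabla P_k=kP_k$, membership in $\E$ via $\mu_\kk\in E\inj\E$, the dimension count against Theorem \ref{prop:fp_in_H:d}) and the final appeal to compactness of the resolvent to conclude $\sigma(\L)=\sigma_p(\L)=-\N_0$ coincide with the paper's reasoning.
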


\begin{proof}[Proof (Sketch).]
We consider the Fourier transform of the eigenvalue equation $\mathfrak L f=\zeta f$, and by setting $\tilde f(\bfxi):=\hat f(\bfxi)/\hat \mu(\bfxi)$ we get analogously to the calculation in the Appendix \ref{app:b} the equation
\begin{equation}\label{simpl_eveq}
 \bfxi\cdot\nabla\tilde f(\bfxi)=-\zeta\tilde f(\bfxi).
\end{equation}
For each $j\in\{1,\ldots,d\}$ the function $\tilde f(0,\ldots,0,\xi_j,0,\ldots, 0)$ needs to be analytic in $\Omega_{\beta/2}$, and satisfies \eqref{ode:tilde_f} for $\tilde g=0$. So, as in the Appendix \ref{app:b} we find that it is necessary that $\zeta\in -\N_0$.

For $k:=-\zeta\in\N_0$ and $\bfxi\in\R^d$ we obtain by differentiating \eqref{simpl_eveq} with respect to $\xi_j$:
\[
 \bfxi\cdot\nabla\Big(\pdiff{\tilde f(\bfxi)}{\xi_j}\Big)=(k-1)\Big(\pdiff{\tilde f(\bfxi)}{\xi_j}\Big).
\]
Thus, for any $\kk\in\N_0^d$ with $|\kk|=k$ we get
\[
 \bfxi\cdot\nabla\big(D^{\kk}\tilde f(\bfxi)\big)=0,
\]
and all characteristics meet at $\bfxi=\mathbf 0$. 
$\hat f$ is analytic on $\R^d$. Hence, the continuity of $D^\kk\tilde f(\bfxi)$ at $\bfxi=\mathbf 0$ implies $D^{\kk}\tilde f(\bfxi)=C$ for some constant $C\in\C$. This holds for any $|\kk|=k$, so the general solution of \eqref{simpl_eveq} is a linear combination of all $\bfxi^{\kk}$ with $|\kk|=-\zeta=k$. Therefore, the Fourier transform of an eigenfunction $f$ with $(\L+k) f=0$ is a linear combination of the $\bfxi^{\kk}\mu(\bfxi)$ with $|\kk|=k$ (and, equivalently, $f(\x)$ is a linear combination of the $D^{\kk}\mu(\x)$). Then, according to Theorem \ref{prop:fp_in_H:d} \eqref{st_fp:3:d} and Theorem \ref{prop:fp_in_H} \eqref{st_fp:3}, the eigenspace for $\zeta=-k$ is spanned by the $\mu_\kk$.
\end{proof}

As in Proposition \ref{spec_pr} we can define the $\L$-invariant subspaces $\E_k:=\cl_\E E_k=\cl_\E\spn\{\mu_\kk:|\kk|\ge k\}$ for all $k\in\N_0$, and $\sigma(\L|_{\E_k})=\{-k,-k-1,\ldots\}$. By applying Lemma \ref{lem:funct} we get by induction
\begin{align}
  \E_k&=\Big\{f\in\E:\int_{\R^d} f(\x)\x^\kk\d\x=0, \, |\kk|\le k-1\Big\}\label{e_k:d_a}\\
&=\big\{f\in\E:D^\kk\hat f(0)=0, \, |\kk|\le k-1\big\}\nonumber.
\end{align}
Analogously to Proposition \ref{spec_prop} \eqref{ho:ii} we can also characterize the spectral projections corresponding to the eigenvalues $-k\in-\N_0$, see the result of Theorem \ref{extension_theorem:d} \eqref{fp:res:iii:d} below. Finally, as in the one-dimensional case, one shows that $\L$ generates a $C_0$-semigroup of bounded operators $(\e^{t\L})_{t\ge 0}$, which is given by the formula (cf.~\cite[Appendix A]{Gallay2002})
\[
 \F_{\x\to\bfxi}\big[\e^{t\L}f\big]=\exp\Big(-\frac{\bfxi\cdot\bfxi}2(1-\e^{-2t})\Big)\hat f\big(\bfxi\e^{-t}\big),\quad t\ge0.
\]
The corresponding decay estimates on the subspaces $\E_k$ can be shown similar to the proof of Proposition \ref{unpert_decay}. For this one uses the norm \eqref{f_norm:d} and the  behavior of $\hat f$ around the origin $\bfxi=\mathbf 0$ for $f\in\E_k$. For a rigorous proof see also \cite{AchArSt}.

\begin{trm}\label{extension_theorem:d}
In $\E:=L^2(\omega)$, with $\omega(\x)=\cosh\beta|\x|$ and $\beta>0$, the operator $L$ is closable, and $\L:=\cl_\E L$ has the following properties:
\begin{enumerate}
\renewcommand{\theenumi}{\roman{enumi}}
\renewcommand{\labelenumi}{(\theenumi)}
 \item\label{fp:res:i:d} The spectrum satisfies $\sigma(\L)=-\N_0$, and $M(\L+k)=\ker(\L+k)=\spn\{\mu_\kk:|\kk|=k\}$ for any $k\in\N_0$. The eigenfunctions satisfy $\mu_\kk=D^\kk\mu_{\mathbf 0}$.
 \item\label{fp:res:ii:d} For any $k\in\N_0$ the closed subspace $\E_k:=\cl_\E\spn\{\mu_\kk:|\kk|\ge k\}$ is an $\L$-invariant subspace of $\E$, and $\spn\{\mu_\kk:|\kk|\le k-1\}$ is a complement. In particular $\E_0=\E$.
\item\label{fp:res:iii:d} The spectral projection $\Pi_{\L,k}$ to the eigenvalue $-k\in-\N_0$ fulfills $\ran\Pi_{\L,k}=\spn\{\mu_\kk:|\kk|=k\}$ and $\ker\Pi_{\L,k}=\E_{k+1}\oplus \spn\{\mu_\kk:|\kk|\le k-1\}$.
 \item\label{fp:res:iv:d} For any $k\in\N_0$ the operator $\L$ generates a $C_0$-semigroup on $\E_k$, and there exists a
constant $C_k\ge 1$ such that we have the estimate
\[
 \left\|\e^{t\L}|_{\E_k}\right\|_{\BB(\E_k)}\le C_k \e^{-kt},\quad\forall t\ge 0.
\]
\end{enumerate}
\end{trm}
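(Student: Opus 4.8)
The plan is to assemble Theorem \ref{extension_theorem:d} from the pieces already prepared in this section, each of which mirrors its one-dimensional counterpart from Section \ref{sec2} (collected in Theorem \ref{extension_theorem}). First I would record the analytic foundations: the resolvent estimate \eqref{djan} shows, exactly as in Lemma \ref{abschluss_LL}, that $L|_{C_0^\infty(\R^d)}$ is closable in $\E$ with closure $\L$ of domain $D(\L)=\{f\in\E:\mathfrak L f\in\E\}$; the same computation evaluated at the critical real part gives dissipativity of $(L-c)|_{C_0^\infty(\R^d)}$ for a suitable $c>0$ (the analogue of Corollary \ref{cor_dissip}); and the compact embedding $W^{1,2}(\varpi,\omega)\inj\inj\E$ combined with \eqref{djan} yields compactness of $R_\L(\zeta)$, just as in Theorem \ref{trm:compactness}. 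Consequently $\sigma(\L)=\sigma_p(\L)$.

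For part \eqref{fp:res:i:d}, the Fourier-side characteristic computation carried out just above identifies $\sigma(\L)=-\N_0$ with $\ker(\L+k)=\spn\{\mu_\kk:|\kk|=k\}$, a space of dimension $\binom{k+d-1}{k}$ by Theorem \ref{prop:fp_in_H:d}\,\eqref{st_fp:3:d}. The identity $\mu_\kk=D^\kk\mu_{\mathbf 0}$ is then read off on the Fourier side from $\hat\mu_\kk(\bfxi)=(\ii\bfxi)^\kk\hat\mu_{\mathbf 0}(\bfxi)$. To obtain $M(\L+k)=\ker(\L+k)$ I would pass to spectral projections: since $R_L(\zeta)\subset R_\L(\zeta)$, the appendix results (as used in Proposition \ref{spec_pr}) give $\Pi_{L,k}\subset\Pi_{\L,k}$ and $\ran\Pi_{\L,k}=\cl_\E\ran\Pi_{L,k}=\spn\{\mu_\kk:|\kk|=k\}$, a finite-dimensional space on which $\L+k$ vanishes; hence the pole of the resolvent at $-k$ is simple and the algebraic and geometric eigenspaces coincide.

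Parts \eqref{fp:res:ii:d} and \eqref{fp:res:iii:d} follow from the same closure argument applied to the isolated spectral subsets $\sigma'=\{0,-1,\ldots,-k+1\}$ and $\sigma'=\{-k\}$, respectively, exactly as in Proposition \ref{spec_pr}. For the first choice the bounded projection $\Pi_{\L,\sigma'}$ decomposes $\E$ into $\ran\Pi_{\L,\sigma'}=\spn\{\mu_\kk:|\kk|\le k-1\}$ and $\ker\Pi_{\L,\sigma'}=\cl_\E E_k=:\E_k$, giving the invariant subspaces with their stated complements and the explicit description \eqref{e_k:d} already obtained via Lemma \ref{lem:funct}. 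For $\sigma'=\{-k\}$ one reads off $\ran\Pi_{\L,k}=\spn\{\mu_\kk:|\kk|=k\}$ and $\ker\Pi_{\L,k}=\E_{k+1}\oplus\spn\{\mu_\kk:|\kk|<k\}$, together with $\ran\Pi_{\L,0}=\spn\{\mu_{\mathbf 0}\}$ and $\ker\Pi_{\L,0}=\E_1$.

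For part \eqref{fp:res:iv:d}, dissipativity together with $\rho(\L)\supset\{\Re\zeta>0\}$ lets me invoke the Lumer-Phillips theorem \cite[Theorem 1.4.3]{pazy} to conclude that $\L$ generates a $C_0$-semigroup of bounded operators coinciding with the explicit Fourier multiplier stated above; restricting to the invariant $\E_k$ and repeating the multiplier estimate of Proposition \ref{unpert_decay} with the $d$-dimensional norm \eqref{f_norm:d} and the Poincar\'e inequality \eqref{poincare:d} yields $\|\e^{t\L}|_{\E_k}\|_{\BB(\E_k)}\le C_k\e^{-kt}$. The main obstacle is the genuinely $d$-dependent eigenvalue analysis behind part \eqref{fp:res:i:d}: one must integrate the transport equation $\bfxi\cdot\nabla\tilde f=-\zeta\tilde f$ on $\C^d$, exclude non-integer $\zeta$ by restricting to the coordinate axes where the one-dimensional argument applies, and then recover the full eigenspace by differentiating so that all characteristics collapse at the origin, forcing $D^\kk\tilde f$ to be constant by analyticity. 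Matching the resulting dimension $\binom{k+d-1}{k}$ with $\dim\ran\Pi_{L,k}$ is exactly what closes the no-Jordan-block argument.
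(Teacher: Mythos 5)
Your proposal is correct and takes essentially the same route as the paper, which itself only outlines this theorem by transferring the one-dimensional arguments: closability and resolvent compactness via the estimate \eqref{djan}, the transport-equation analysis of $\bfxi\cdot\nabla\tilde f=-\zeta\tilde f$ (restricted to coordinate axes to exclude $\zeta\notin-\N_0$, then differentiated $|\kk|=k$ times so the characteristics collapse at the origin) for part (\ref{fp:res:i:d}), the spectral-projection closure arguments of Proposition \ref{spec_pr} and Lemma \ref{lem:funct} for parts (\ref{fp:res:ii:d})--(\ref{fp:res:iii:d}), and Lumer--Phillips plus the multiplier estimate of Proposition \ref{unpert_decay} with the norm \eqref{f_norm:d} and the Poincar\'e inequality \eqref{poincare:d} for part (\ref{fp:res:iv:d}). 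Your explicit simple-pole argument for $M(\L+k)=\ker(\L+k)$ (the range of $\Pi_{\L,k}$ is finite-dimensional and $\L+k$ vanishes on it) merely spells out a detail the paper delegates to the appendix results, so there is no substantive divergence.
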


Next we specify the conditions on the perturbation $\Theta$.

\medskip
\noindent{\bf ($\mathbf{C_d}$) Conditions on $\boldsymbol\Theta$:} We assume that $\Theta f=\theta*f$, for $f\in\E$, where $\theta$ is a tempered distribution that fulfills the following properties in $\Omega_{\beta/2}$ for some $\beta>0$:
\begin{enumerate}
\renewcommand{\theenumi}{\roman{enumi}}
\renewcommand{\labelenumi}{(\theenumi)}
	\item The Fourier transform $\hat \theta$ can be extended to an analytic function in $\Omega_{\beta/2}$ (also denoted by $\hat \theta$), and $\hat\theta\in L^\infty(\Omega_{\beta/2})$.
	\item It holds $\hat\theta(\mathbf 0)=0$, i.e.~$\theta$ has zero mean.
	\item The mapping $\bfxi\mapsto\Re\int_0^1\hat\theta(\bfxi s)/s\d s$ is essentially bounded in $\Omega_{\beta/2}$.
\end{enumerate}

Condition {\bf ($\mathbf{C_d}$)(i)} ensures that $\Theta\in\BB(\E)$, which is seen by using the norm $\nn\cdot\nn_\omega$. And due to {\bf ($\mathbf{C_d}$)(ii)} we have $\Theta:\E_k\to\E_{k+1}$ for all $k\in\N_0$. In the following we always assume that {\bf ($\mathbf{C_d}$)} holds.

\begin{prop}\label{pert:spec:d}
We have the following spectral properties of $\L+\Theta$ in $\E$:
\begin{enumerate}\renewcommand{\theenumi}{\roman{enumi}}
\renewcommand{\labelenumi}{(\theenumi)}
 \item\label{eins} $\sigma(\L+\Theta)=-\N_0$.
 \item\label{zwei} For each $k\in\N_0$, the eigenspace $\ker(\L+\Theta+k)$ has the dimension $\binom{k+d-1}{k}$.
 \item Under appropriate scaling, the eigenfunctions $f_\kk$ to the eigenvalue $-k\in\-\N_0$ are explicitly given by
\begin{equation}\label{rec_f_k:d}
 f_\kk=D^\kk f_{\mathbf 0},\quad|\kk|=k,
\end{equation}
where
\begin{equation}\label{f_k:d}
 \hat f_{\mathbf 0}(\bfxi):=\exp\Big(-\frac{\bfxi\cdot\bfxi}2+\int_0^1\frac{\hat\theta(\bfxi s)}s\d s\Big),\quad \bfxi\in\Omega_{\beta/2}\subset\C^d.
\end{equation}
\end{enumerate}
Thereby $f_{\mathbf 0}$ is the unique stationary solution of the perturbed Fokker-Planck equation (\ref{pert_fp}) with unit mass.
\end{prop}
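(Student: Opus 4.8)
The plan is to mirror the one-dimensional proof of Proposition~\ref{pert:spec}, upgrading the scalar eigenvalue index to the multi-index framework already set up for the unperturbed operator in \eqref{e_k:d} and Theorem~\ref{extension_theorem:d}. First I would establish the inclusion $\sigma(\L+\Theta)\subseteq-\N_0$ exactly as in one dimension. Since $\hat f$ is analytic on $\Omega_{\beta/2}$ for every $f\in\E$, the characterization \eqref{e_k:d} yields $\bigcap_{k\in\N}\E_k=\{0\}$ (an analytic function all of whose derivatives vanish at the origin is trivial). Thus any eigenfunction $f\neq0$ lies in some $\E_m\setminus\E_{m+1}$, so that $m$ is minimal with $\Pi_{\L,m}f\neq0$. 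Applying $\Pi_{\L,m}$ to the eigenvalue equation, using $\Theta\colon\E_m\to\E_{m+1}\subseteq\ker\Pi_{\L,m}$ from condition $(\mathbf{C_d})$(ii) together with $\Pi_{\L,m}\L=\L\Pi_{\L,m}=-m\,\Pi_{\L,m}$, forces the eigenvalue to be $-m$. Hence every eigenvalue lies in $-\N_0$.

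For the reverse inclusion and the explicit form I would exhibit the eigenfunctions directly. A short Fourier computation turns $(\L+\Theta)f_{\mathbf 0}=0$ into the transport equation $\bfxi\cdot\nabla\hat f_{\mathbf 0}=(\hat\theta-\bfxi\cdot\bfxi)\hat f_{\mathbf 0}$; taking the logarithmic derivative of the candidate \eqref{f_k:d} and using $\tfrac{\d}{\d s}\hat\theta(\bfxi s)=\bfxi\cdot(\nabla\hat\theta)(\bfxi s)$ together with $\hat\theta(\mathbf 0)=0$ verifies this identity. Conditions $(\mathbf{C_d})$(i),(iii) then give $f_{\mathbf 0}\in\E$ via \eqref{f_in_E:d}: on $\Omega_{\beta/2}$ the Gaussian factor dominates while $\Re\int_0^1\hat\theta(\bfxi s)/s\,\d s$ stays bounded, and $\bfxi^\kk\hat f_{\mathbf 0}$ remains square-integrable along every horizontal line, so $D^\kk f_{\mathbf 0}\in\E$ as well. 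Since $[\partial_{x_j},\L]=\partial_{x_j}$ and $\Theta$ commutes with $\partial_{x_j}$, each $\partial_{x_j}$ acts as a creation operator; hence $f_\kk:=D^\kk f_{\mathbf 0}$ satisfies $(\L+\Theta)f_\kk=-|\kk|\,f_\kk$ and is nonzero because $\hat f_{\mathbf 0}$ never vanishes. This gives $-\N_0\subseteq\sigma(\L+\Theta)$, hence (i), and produces the eigenfunctions \eqref{rec_f_k:d}.

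It remains to determine the dimension in (ii) and to show the $f_\kk$ exhaust each eigenspace. For the lower bound the $f_\kk$ with $|\kk|=k$ are linearly independent: a relation $\sum_{|\kk|=k}c_\kk f_\kk=0$ transforms to $\big(\sum_{|\kk|=k}c_\kk\bfxi^\kk\big)\hat f_{\mathbf 0}=0$, and dividing by the nonvanishing $\hat f_{\mathbf 0}$ forces the homogeneous polynomial, hence every $c_\kk$, to vanish. For the matching upper bound I would reuse the minimal-index analysis: any nonzero eigenfunction to $-k$ has minimal index exactly $k$, so $\Pi_{\L,k}$ is injective on $\ker(\L+\Theta+k)$; since $\dim\ran\Pi_{\L,k}=\#\{\kk\in\N_0^d:|\kk|=k\}=\binom{k+d-1}{k}$, the eigenspace has at most this dimension. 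The two bounds coincide, proving (ii) and $\ker(\L+\Theta+k)=\spn\{f_\kk:|\kk|=k\}$, which completes (iii). Finally $\ker(\L+\Theta)$ is one-dimensional and $\hat f_{\mathbf 0}(\mathbf 0)=1$, so $f_{\mathbf 0}$ is the unique unit-mass stationary state. Alternatively, surjectivity in (i) could be obtained without exhibiting $f_{\mathbf 0}$, by the ladder-operator and compact-resolvent argument used in the one-dimensional proof.

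The genuinely substantive point is the same structural one as in one dimension, now in multi-index form: converting \emph{``minimal index equals $k$''} into the injectivity of $\Pi_{\L,k}$ on the eigenspace to pin down the sharp bound $\binom{k+d-1}{k}$. Everything else is either a direct verification (that the stated $\hat f_{\mathbf 0}$ solves the transport equation and lies in $\E$) or the transport/characteristic argument already carried out for the unperturbed operator; the only real labour is the multi-index bookkeeping and confirming membership in $\E$ through conditions $(\mathbf{C_d})$.
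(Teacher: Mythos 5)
Your proof is correct and follows essentially the same route as the paper's (sketched) proof: first $\sigma(\L+\Theta)\subseteq-\N_0$ together with the upper bound $\dim\ker(\L+\Theta+k)\le\dim\ran\Pi_{\L,k}=\binom{k+d-1}{k}$ via the minimal-index/spectral-projection argument, then direct verification that the $f_\kk=D^\kk f_{\mathbf 0}$ are $\binom{k+d-1}{k}$ linearly independent eigenfunctions lying in $\E$ by \eqref{f_in_E:d} and condition $(\mathbf{C_d})$, so that the two dimension counts match. The details you add (the transport-equation check for $\hat f_{\mathbf 0}$, linear independence after dividing by the nonvanishing $\hat f_{\mathbf 0}$, and the injectivity of $\Pi_{\L,k}$ on the eigenspace) are precisely what the paper's proof sketch leaves implicit.
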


\begin{proof}[Proof (Sketch)]
Since the resolvent is compact (see the discussion above), the spectrum consists only of eigenvalues. As in the one-dimensional case one shows $\sigma(\L+\Theta)\subseteq -\N_0$ by applying $\Pi_{\L,k}$ to the eigenvalue equation. This also implies $\dim\ker(k+\L+\Theta)\le\dim\ran\Pi_{\L,k}=\binom{k+d-1}{k}$. Then one verifies that the functions $f_\kk$ given in (\ref{rec_f_k:d}) are eigenfunctions, and lie in $\E$, according to the condition (\ref{f_in_E:d}). Since $\dim\spn\{f_\kk:|\kk|=k\}=\binom{k+d-1}{k}$, there are no further eigenfunctions, due to the previous estimate on the dimension of the eigenspaces. So $\ker(k+\L+\Theta)=\spn\{f_\kk:|\kk|=k\}$ for all $k\in\N_0$.
\end{proof}

Now we introduce 
\[\hat \psi(\bfxi):=\exp\Big(\int_0^1\frac{\hat\theta(\bfxi s)}s\d s\Big),\quad \bfxi\in\Omega_{\beta/2},\]
and the mapping $\Psi:f\mapsto f*\psi$. The results of Lemma \ref{eigs:Psi} for $\Psi$ still hold, and due to (\ref{f_k:d}) we have for all $\kk\in\N_0$:
\[f_\kk=\Psi\mu_\kk.\]
As in Proposition \ref{aquiv:reso} we obtain $R_{\L+\Theta}(\zeta)|_{\E_k} =\Psi\circ R_\L(\zeta)\circ\Psi^{-1}|_{\E_k},$ for all $k\in\N_0$ and $\zeta\in\C\backslash\{-k,-k-1,\ldots\}.$ The estimates (\ref{reso:est}) and (\ref{pert_dec_rate}) also hold here, and for the convergence of $f(t)=\e^{t(\L+\Theta)}f$ to the stationary solution see Remark \ref{conv_to_f_0}. As in Section \ref{sec3} we finally have:

\begin{trm}\label{final_pert_trm:d}
Let $\E=L^2(\omega(\x)\d \x)$, where $\omega(\x)=\cosh\beta |\x|$, for some $\beta>0$ and $\x\in\R^d$, and let $\Theta$ fulfill the condition {\bf ($\mathbf{C_d}$)} for this $\beta>0$. Then the perturbed operator $\L+\Theta$ has the following properties in $\E$:
\begin{enumerate}
 \renewcommand{\theenumi}{\roman{enumi}}
\renewcommand{\labelenumi}{(\theenumi)}
\item It has compact resolvent, and $\sigma(\L+\Theta)=\sigma_p(\L+\Theta)=-\N_0$.
\item There holds $M(\L+\Theta+k)=\ker(\L+\Theta+k)=\spn\{f_\kk:|\kk|=k\}$, where the $f_\kk$ are the eigenfunctions given by
(\ref{rec_f_k:d}). They are related by $f_\kk=D^\kk f_{\mathbf 0}$.
\item The spectral projection $\P_{k}$ to the eigenvalue $-k\in-\N_0$ fulfills $\ran\P_{k}=\spn\{f_\kk:|\kk|=k\}$ and $\ker\P_{k}=\E_{k+1}\oplus \spn\{f_\kk:|\kk|\le k-1 \}$, where the $(\L+\Theta)$-invariant spaces $\E_k$ are explicitly given in (\ref{e_k:d_a}).
\item For every $k\in\N_0$, the operator $(\L+\Theta)|_{\E_k}$ generates a $C_0$-semigroup in $\E_k$, denoted by $(\e^{t(\L+\Theta)}|_{\E_k})_{t\ge 0}$, which satisfies the estimate
\[\|\e^{t(\L+\Theta)}|_{\E_k}\|_{\BB(\E_k)}\le\tilde  C_k\e^{-k t},\quad t\ge0,\]
where the constant $\tilde C_k>0$ is independent of $t$.
\end{enumerate}
\end{trm}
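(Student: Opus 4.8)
This is the $d$-dimensional counterpart of Theorem \ref{final_pert_trm}, and every ingredient has already been transferred to $\R^d$ in the course of Section \ref{sec35}; the plan is therefore to collect these results item by item and, where needed, to re-run the one-dimensional arguments verbatim with multi-indices. For item~(i), I would first recall that the unperturbed operator $\L$ has compact resolvent in $\E$: this follows from the resolvent estimate \eqref{djan} together with the compact embedding $W^{1,2}(\varpi,\omega)\inj\inj\E$, exactly as in Theorem \ref{trm:compactness}. Condition {\bf ($\mathbf{C_d}$)(i)} guarantees $\Theta\in\BB(\E)$, so by \cite[Proposition III.1.12]{engel} a bounded perturbation preserves compactness of the resolvent (precisely the reasoning of Lemma \ref{sigma_perturb}). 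Hence $\L+\Theta$ has compact resolvent and $\sigma(\L+\Theta)=\sigma_p(\L+\Theta)$, while the identification $\sigma(\L+\Theta)=-\N_0$ is supplied by Proposition \ref{pert:spec:d}\eqref{eins}.

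For items~(ii) and~(iii) the eigenstructure is already given by Proposition \ref{pert:spec:d}: the eigenspace $\ker(\L+\Theta+k)$ has dimension $\binom{k+d-1}{k}$ and is spanned by the $f_\kk=D^\kk f_{\mathbf 0}$ with $|\kk|=k$. To obtain $M(\L+\Theta+k)=\ker(\L+\Theta+k)$ and to pin down the spectral projections, I would reproduce the argument of Lemma \ref{def:spec:proj:pert}. Since $\hat\theta(\mathbf 0)=0$ forces $\Theta:\E_k\to\E_{k+1}$, the subspace $\mathcal K_k:=\E_{k+1}\oplus\spn\{f_\kk:|\kk|<k\}$ is $(\L+\Theta)$-invariant and complementary to $\spn\{f_\kk:|\kk|=k\}$, yielding the invariant decomposition $\E=\mathcal K_k\oplus\ker(\L+\Theta+k)$ with $\sigma((\L+\Theta)|_{\mathcal K_k})=-\N_0\setminus\{-k\}$. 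Invoking the appendix Lemma \ref{uniqueness} then identifies $\ran\P_k$ and $\ker\P_k$ as claimed and simultaneously shows that every pole of the resolvent is simple, i.e.\ $M=\ker$.

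For item~(iv), generation of a $C_0$-semigroup on each $\E_k$ follows by combining Theorem \ref{extension_theorem:d}\eqref{fp:res:iv:d} for $\L$ with $\Theta|_{\E_k}\in\BB(\E_k)$ via the bounded-perturbation result \cite[Theorem III.1.3]{engel}, as in Proposition \ref{prop_3_11}. The decay rate I would obtain from the intertwining $R_{\L+\Theta}(\zeta)|_{\E_k}=\Psi\circ R_\L(\zeta)\circ\Psi^{-1}|_{\E_k}$ established in Section \ref{sec35} (the analogue of Proposition \ref{aquiv:reso}): since $\Psi,\Psi^{-1}\in\BB(\E_k)$, the resolvent-power bound \eqref{reso:est} transfers from $\L$ to $\L+\Theta$, and Hille--Yosida then delivers $\|\e^{t(\L+\Theta)}|_{\E_k}\|_{\BB(\E_k)}\le\tilde C_k\e^{-kt}$.

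The two genuinely substantive points—both already handled in the section but worth isolating—are where I expect the real work to lie. First, upgrading $\sigma(\L+\Theta)\subseteq-\N_0$ to equality requires excluding a truncated spectrum: were $-k_0$ the smallest eigenvalue, then $(\L+\Theta)|_{\E_{k_0+1}}$ would have empty spectrum despite carrying a compact resolvent, a contradiction. Second, the decay estimate rests \emph{entirely} on the conjugation by $\Psi$, because in dimension $d>1$ no convenient explicit Fourier representation of $R_{\L+\Theta}(\zeta)$ is available; the $\Psi$-intertwining is thus the indispensable tool, and its construction (the analyticity and boundedness of $\Psi,\Psi^{-1}$ built on condition {\bf ($\mathbf{C_d}$)(iii)}) is the crux. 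The only dimensional subtlety beyond the one-dimensional case is that eigenspaces are now $\binom{k+d-1}{k}$-dimensional, so the multiplicity count—the bound $\dim\ker(k+\L+\Theta)\le\dim\ran\Pi_{\L,k}$ matched exactly by the independent explicit eigenfunctions $f_\kk$—must be tracked with multi-indices rather than a single index.
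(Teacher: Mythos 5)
Your proposal is correct and follows essentially the paper's own route: the paper proves Theorem \ref{final_pert_trm:d} precisely by assembling the results of Section \ref{sec35} — compact resolvent from the estimate \eqref{djan} and the Opic embedding plus the bounded-perturbation argument of Lemma \ref{sigma_perturb}, the eigenstructure from Proposition \ref{pert:spec:d}, the spectral projections via the $d$-dimensional analogue of Lemma \ref{def:spec:proj:pert} with Lemma \ref{uniqueness}, and the decay rate through the $\Psi$-intertwining $R_{\L+\Theta}(\zeta)|_{\E_k}=\Psi\circ R_\L(\zeta)\circ\Psi^{-1}|_{\E_k}$ combined with Hille--Yosida — exactly as you lay out. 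The one step where you deviate is the equality $\sigma(\L+\Theta)=-\N_0$: you import the one-dimensional truncated-spectrum contradiction (a smallest eigenvalue $-k_0$ would force $(\L+\Theta)|_{\E_{k_0+1}}$ to have empty spectrum despite compact resolvent), whereas in $\R^d$ the paper obtains surjectivity onto $-\N_0$ directly by verifying that the explicit functions $f_\kk$ from \eqref{rec_f_k:d} lie in $\E$ (via condition \eqref{f_in_E:d}) and solve the eigenvalue equation, with $\dim\spn\{f_\kk:|\kk|=k\}=\binom{k+d-1}{k}$ saturating the upper bound on the eigenspace dimension. This distinction is not cosmetic: the contradiction argument presupposes that a proper spectrum would have the truncated form $\{-k_0,\ldots,0\}$, which in one dimension rested on the annihilation operator $\alpha^-$ of Lemma \ref{prop_annil}; the paper never constructs a $d$-dimensional ladder operator, so that dichotomy is not available verbatim in $\R^d$. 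Since you also invoke the explicit eigenfunctions matching the multiplicity count, your argument closes regardless — but the contradiction step is then redundant and is best dropped in favour of the direct verification, which is what the paper does. A minor attribution slip: simplicity of the poles, i.e.\ $M(\L+\Theta+k)=\ker(\L+\Theta+k)$, follows from the invariant decomposition together with Proposition \ref{spec_prop} (\ref{itemiii})--(\ref{itemiv}); Lemma \ref{uniqueness} itself only identifies $\ran\P_k$ and $\ker\P_k$.
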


\section{Simulation Results}\label{sec4}

In this section we shall illustrate numerically the exponential convergence for the one-di\-men\-sio\-nal perturbed Fokker-Planck equation (\ref{pert_fp}), with $\theta:=\epsilon(\delta_{-\alpha}-\delta_{\alpha})$, i.e.~$\Theta f(x)=\epsilon(f(x+\alpha)-f(x-\alpha))$, for some $\epsilon,\alpha\in\R$. 
The eigenfunctions $f_k$ of the evolution operator $\L+\Theta$ can be obtained by an inverse Fourier transform, with $\hat f_k$ explicitly given in (\ref{rec_f_k}). If the initial condition $\phi$ is a (finite) linear combination of the $f_k$, the solution to (\ref{pert_fp}) reads explicitly 
\[
f(t,x)=\e^{t(\L+\Theta)}\Big[\sum_{j=1}^n a_jf_{k_j}\Big]=\sum_{j=1}^n a_j\e^{-k_j t}f_{k_j},\quad \forall t\ge 0.\]

In the simulation we use a mass conserving Crank-Nicolson finite difference scheme for (\ref{pert_fp}). It is employed on the spatial interval $[-25,25]$ (with $1500$ gridpoints) along with
zero-flux boundary conditions. Moreover, we choose $\alpha=\epsilon=2$ and $\beta=1$, i.e.\ $\E=L^2(\cosh x)$.

\begin{figure}[h!]
 \subfigure[Initial condition $\phi_1=(f_1-1.32f_2)/\|f_1-1.32f_2\|_\omega$.]{\label{fig_a}\includegraphics[width=0.49\textwidth]{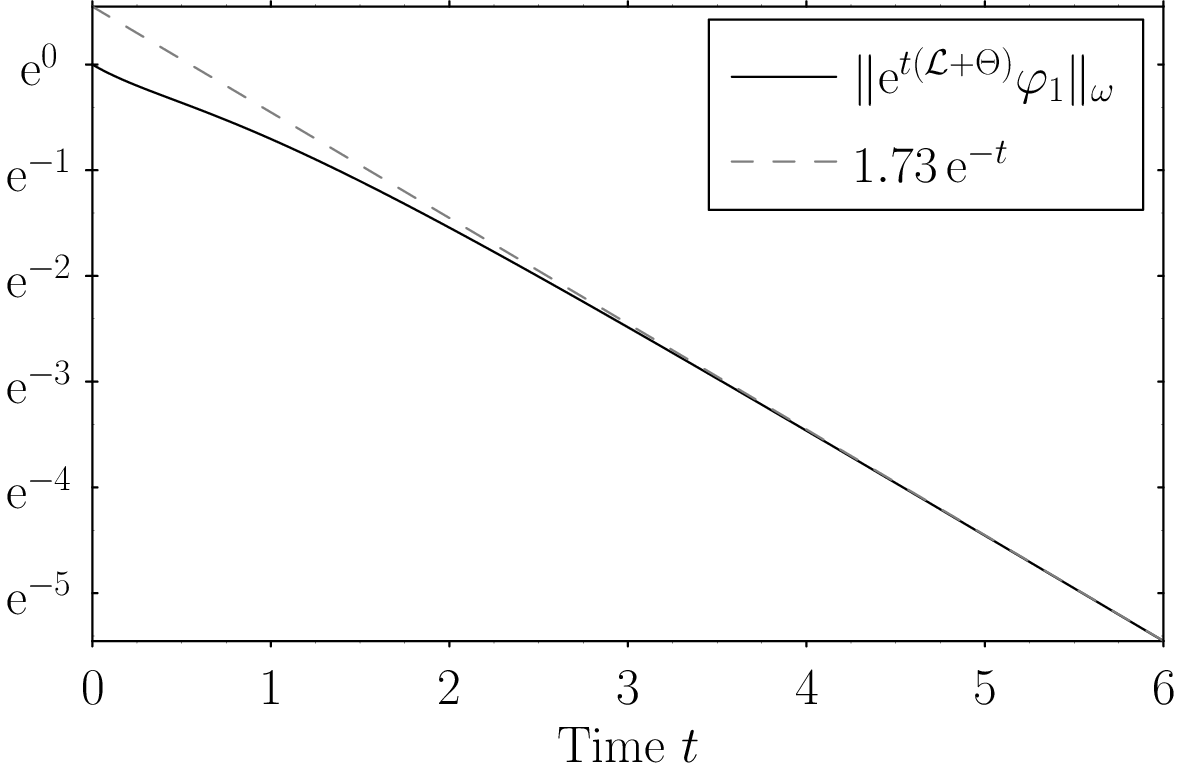}}\hfill
 \subfigure[Initial condition $\phi_2=(\chi_{[-4,0]}-\chi_{[0,4]})/\allowbreak\|\chi_{[-4,0]}-\chi_{[0,4]}\|_\omega\in\E_1$.]{\label{fig_b}\includegraphics[width=0.49\textwidth]{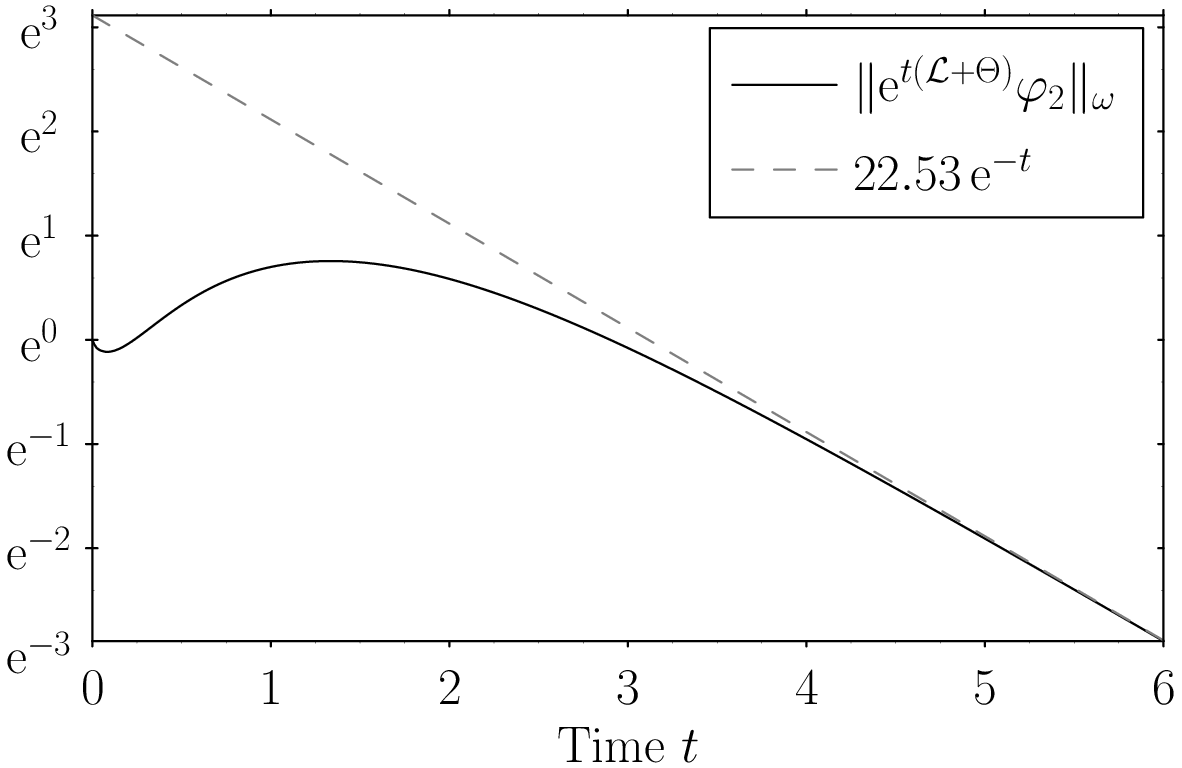}}\caption{Evolution of the norm $\|\cdot\|_\omega$ of solutions of the perturbed equation for different initial conditions $\phi$.}
\end{figure}

The following numerical results verify the decaying behaviour of solutions to (\ref{pert_fp}), and yield an estimate to the constants $\tilde C_k$ from Theorem \ref{final_pert_trm}. 
First we consider the initial condition $\phi_1=(f_1-1.32f_2)/\|f_1-1.32f_2\|_\omega$. 
For the corresponding solution we plot $\|f(t,\cdot)\|_\omega$ in Figure \ref{fig_a}.
Since the sequence $(f_k)_{k\in\N}$ is not orthogonal in $\E$, the initial decay rate is here smaller than the individual decay rate of $f_1$ (i.e.~$-1$). But after some time, the $f_1$-term becomes  dominant, and the decay rate approaches $-1$. For large times, the norm behaves approximately like $1.73 \,\e^{-t},$ so we have the lower bound $\tilde C_1\ge 1.73$.

As a second example we choose the initial condition $\phi_2=(\chi_{[-4,0]}-\chi_{[0,4]})/\allowbreak\|\chi_{[-4,0]}-\chi_{[0,4]}\|_\omega$. It lies in $\E_1$ since it is massless. The evolution of $\|f(t,\cdot)\|_\omega$ is displayed in Figure \ref{fig_b}. Here, the norm even increases initially. Only after some time, the norm begins to decay with a rate tending to $-1$. For large times $t$, the norm behaves approximately like $22.53\,\e^{-t}$, which shows  $\tilde C_1\ge 22.53$.
\appendix

\section{Spectral Projections}\label{app:space_enlarg}

In this section we review some properties of spectral projections and resolvents, cf.~\cite[Chapters V.9-10]{taylay}, \cite[Chapter VIII.8]{yosida} and \cite[Sections III.6.4-5]{kato}.\medskip

Here, $X$ is a Hilbert space, $A\in\CC(X)$, and we assume $\lambda\in\sigma(A)$ to be an isolated point 
of the spectrum. Then the corresponding spectral projection $\Rho_{A,\lambda}$ is defined by (\ref{def:spec_proj}), and $\lambda$ is an
isolated singularity of the resolvent $R_A(\zeta)$.

\begin{prop}\label{fds}
For every $n\in\N$ we have
\begin{align*}
 \ran(\lambda-A)^n\supseteq & \ker \Rho_{A,\lambda},\\
 \ker(\lambda-A)^n\subseteq & \ran \Rho_{A,\lambda}.
\end{align*}
There exists some $n\in\N$ such that both inclusion relations become equalities iff $\lambda$ is a pole of $R_A(\zeta)$. In
this case $\lambda\in\sigma_p(A)$, i.e.~an eigenvalue. 
\end{prop}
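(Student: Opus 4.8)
The plan is to exploit the reduction of $A$ by the spectral projection recalled before \eqref{def:spec_proj}. Writing $P:=\Rho_{A,\lambda}$, $M:=\ran P$ and $N:=\ker P$, we have the $A$-invariant decomposition $X=M\oplus N$ with $\sigma(A|_M)=\{\lambda\}$ and $\sigma(A|_N)=\sigma(A)\setminus\{\lambda\}$; in particular $\lambda\in\rho(A|_N)$, so $(\lambda-A|_N)$ is a bijection of $N$ onto itself with bounded inverse.

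First I would establish the two inclusions for arbitrary $n\in\N$. For the kernel inclusion, take $x\in\ker(\lambda-A)^n$ and split $x=Px+(I-P)x=:x_M+x_N$; since $P$ commutes with $A$ on $D(A^n)$, both $(\lambda-A)^nx_M\in M$ and $(\lambda-A)^nx_N\in N$ are the components of the vanishing vector $(\lambda-A)^nx$, hence each vanishes. As $(\lambda-A|_N)^n$ is invertible this forces $x_N=0$, whence $x=x_M\in M=\ran P$. For the range inclusion, given $y\in N$ put $z:=R_{A|_N}(\lambda)^n y\in N$, so that $(\lambda-A)^nz=y$ and thus $\ker P\subseteq\ran(\lambda-A)^n$.

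For the equivalence with $\lambda$ being a pole I would invoke the Laurent expansion of the resolvent about the isolated singularity $\lambda$ (cf.~\cite[Section III.6.5]{kato}, \cite[Chapter V.10]{taylay}), whose principal part is $\sum_{n\ge1}(\zeta-\lambda)^{-n}(-D^{n-1})$ with the eigennilpotent $D:=(A-\lambda)P$ and the convention $D^0=P$. By definition $\lambda$ is a pole of order $m$ exactly when $D^m=0\ne D^{m-1}$. If $\lambda$ is such a pole, then $D^m=0$ says that $(\lambda-A)^m$ annihilates $M$, so $M\subseteq\ker(\lambda-A)^m$; together with the inclusion above this yields $\ker(\lambda-A)^m=M=\ran P$. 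Simultaneously $(\lambda-A)^m$ vanishes on $M$ and maps $N$ bijectively onto $N$, so $\ran(\lambda-A)^m=N=\ker P$, and both relations are equalities for $n=m$. Conversely, if the kernel inclusion is an equality for some $n$, then $M\subseteq\ker(\lambda-A)^n$, i.e.~$(A-\lambda)^n=0$ on $M$ and trivially on $N$, so $D^n=0$ and $\lambda$ is a pole. Finally, a pole of order $m\ge1$ has $D^{m-1}\ne0$, hence $M\ne\{0\}$, and the nilpotent operator $(\lambda-A)|_M$ on the nonzero space $M$ has nontrivial kernel; any kernel vector is an eigenvector, giving $\lambda\in\sigma_p(A)$.

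The inclusions are purely algebraic once the reduction is available; the genuine content is the pole characterization, where the possible unboundedness of $A$ must be controlled. The main obstacle is therefore to justify the algebra of $(\lambda-A)^n$ on $M$—a priori $A|_M$ need not be bounded—which is precisely what the Laurent expansion and the nilpotency criterion $D^m=0$ supply, since the termination of the principal part is equivalent to $(\lambda-A)|_M$ being nilpotent and identifies $M$ with the finite-order algebraic eigenspace $M(\lambda-A)$.
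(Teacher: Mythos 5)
Your proof is correct and follows precisely the standard route of the references the paper cites for this appendix (Kato, Sections III.6.4--5; Taylor--Lay, Chapters V.9--10): the paper states Proposition \ref{fds} without proof, and your combination of the Riesz reduction $X=\ran \Rho_{A,\lambda}\oplus\ker \Rho_{A,\lambda}$ (with $\lambda\in\rho(A|_{\ker \Rho_{A,\lambda}})$ giving both inclusions) and the Laurent expansion with eigennilpotent $D=(A-\lambda)\Rho_{A,\lambda}$ (giving the pole characterization and $\lambda\in\sigma_p(A)$) is exactly that argument. One cosmetic remark: with the paper's convention $R_A(\zeta)=(\zeta-A)^{-1}$ the principal part reads $\sum_{n\ge1}(\zeta-\lambda)^{-n}D^{n-1}$ without the minus sign, but this is a harmless convention slip since your argument only uses the criterion that $\lambda$ is a pole of order $m$ iff $D^m=0\neq D^{m-1}$.
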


\begin{prop}\label{spec_prop}
For the reduction of $A$ by a fixed spectral projection $\Rho_{A,\lambda}$ we have:
\begin{enumerate}
\renewcommand{\theenumi}{\roman{enumi}}
\renewcommand{\labelenumi}{(\theenumi)}
\item There holds $\Rho_{A,\lambda}D(A)\subset D(A)$, and $\ker \Rho_{A,\lambda}$ and $\ran \Rho_{A,\lambda}$ are $A$-invariant subspaces of $X$.
\item $A|_{\ran \Rho_{A,\lambda}}\in\CC({\ran \Rho_{A,\lambda}})$ and $A|_{\ker \Rho_{A,\lambda}}\in\CC({\ker \Rho_{A,\lambda}})$.
\item There holds $\sigma(A|_{\ran \Rho_{A,\lambda}})=\{\lambda\}$ and $\sigma(A|_{\ker \Rho_{A,\lambda}})=\sigma(A)\backslash\{\lambda\}$. Furthermore $A|_{\ran \Rho_{A,\lambda}}\in\BB(\ran \Rho_{A,\lambda})$.

\item\label{itemiii} If $\dim \ran \Rho_{A,\lambda}<\infty$, then $\lambda-A|_{\ran \Rho_{A,\lambda}}$ is nilpotent, $\lambda$ is a pole of $R_A(\zeta)$,
and $\lambda\in\sigma_p(A)$.
\item\label{itemiv} If $\lambda$ is a pole of $R_A(\zeta)$, then $M(\lambda-A)=\ker(\lambda-A)$ iff the pole has order one.
\end{enumerate}
\end{prop}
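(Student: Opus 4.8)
The plan is to derive all five assertions from the standard Riesz projection calculus at the isolated spectral value $\lambda$, resting on two structural facts: that $\Rho:=\Rho_{A,\lambda}$ is a bounded idempotent commuting with $A$, and that $R_A$ admits a Laurent expansion about $\lambda$. First I would record that $R_A(\zeta)$ commutes with $A$ on $D(A)$ and obeys $AR_A(\zeta)=\zeta R_A(\zeta)-\id$ on all of $X$. Integrating this identity over $\Gamma$, using $\oint_\Gamma\d\zeta=0$ and passing $A$ through the contour integral via Riemann sums and the closedness of $A$, yields $\ran\Rho\subset D(A)$ together with $A\Rho f=\tfrac1{2\pi\ii}\oint_\Gamma\zeta R_A(\zeta)f\d\zeta$; in particular $\Rho D(A)\subset D(A)$ and $A\Rho f=\Rho Af$ for $f\in D(A)$. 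This commutation gives the $A$-invariance of $\ran\Rho$ and $\ker\Rho$ claimed in~(i): density of $D(A)\cap\ker\Rho$ in $\ker\Rho$ follows by applying the bounded operator $\id-\Rho$ to the dense set $D(A)$, while $D(A)\cap\ran\Rho=\ran\Rho$ because $\ran\Rho\subset D(A)$. Assertion~(ii) is then the elementary observation that the restriction of a closed operator to a closed invariant subspace on which it stays densely defined is again closed.

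For~(iii) I would use that $R_A(\zeta)$ reduces along $X=\ran\Rho\oplus\ker\Rho$ for every $\zeta\in\rho(A)$, so its two compressions are precisely the resolvents of $A|_{\ran\Rho}$ and $A|_{\ker\Rho}$; this already yields the inclusion $\sigma(A|_{\ran\Rho})\cup\sigma(A|_{\ker\Rho})\subseteq\sigma(A)$. The real content is the separation, which I would extract from the holomorphic functional calculus (equivalently, from the Laurent expansion together with the regularity of $R_A$ on $\ker\Rho$ at $\lambda$): the compressed resolvent $R_{A|_{\ker\Rho}}(\zeta)$ continues holomorphically across $\lambda$, so $\lambda\in\rho(A|_{\ker\Rho})$, forcing $\sigma(A|_{\ker\Rho})=\sigma(A)\setminus\{\lambda\}$ and hence $\sigma(A|_{\ran\Rho})=\{\lambda\}$. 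Finally $A|_{\ran\Rho}$ is everywhere defined on the Banach space $\ran\Rho$ (because $\ran\Rho\subset D(A)$) and closed, so the closed graph theorem renders it bounded.

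The last two assertions I would read off the Laurent series
\[
R_A(\zeta)=\frac{\Rho}{\zeta-\lambda}+\sum_{n\ge1}\frac{D^n}{(\zeta-\lambda)^{n+1}}+H(\zeta),\qquad D:=(A-\lambda)\Rho,
\]
valid near $\lambda$ with $H$ holomorphic there. For~(iv), if $\dim\ran\Rho<\infty$ then $A|_{\ran\Rho}$ is a finite matrix with spectrum $\{\lambda\}$ by~(iii), so $D|_{\ran\Rho}=(A-\lambda)|_{\ran\Rho}$ is nilpotent; its nontrivial kernel exhibits an eigenvector, whence $\lambda\in\sigma_p(A)$, and the nilpotency truncates the principal part above, making $\lambda$ a pole. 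For~(v) the pole order equals the nilpotency index $m$ of $D$ (the least $m$ with $D^m=0$); Proposition~\ref{fds} gives $M(\lambda-A)=\ran\Rho$, one checks $\ker(\lambda-A)=\ker D\cap\ran\Rho$, and the strict chain $\ker(\lambda-A)\subsetneq\cdots\subsetneq\ker(\lambda-A)^m=\ran\Rho$ collapses to a single space exactly when $m=1$, which is the asserted equivalence.

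I expect the genuine difficulty to be concentrated in~(iii), specifically in showing $\lambda\in\rho(A|_{\ker\Rho})$: removing the singularity of the compressed resolvent at $\lambda$ is exactly the Riesz decomposition theorem and resists any purely formal manipulation. Throughout, the unboundedness of $A$ forces careful tracking of domains, and the single fact $\ran\Rho\subset D(A)$ is what ultimately makes $A|_{\ran\Rho}$ bounded and legitimizes the finite-dimensional reasoning in~(iv)--(v); the remaining steps are bookkeeping once the expansion with eigennilpotent $D$ is in hand.
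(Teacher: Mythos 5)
The paper does not prove this proposition at all: it is quoted as background, with the proof delegated to the cited references (\cite[Sections III.6.4--5]{kato}, \cite[Chapters V.9--10]{taylay}, \cite[Chapter VIII.8]{yosida}). Your reconstruction follows exactly the route of those sources: the contour-integral commutation $A\Rho_{A,\lambda} f=\frac1{2\pi\ii}\oint_\Gamma\zeta R_A(\zeta)f\d\zeta$ via Riemann sums and closedness of $A$ (which correctly yields $\ran\Rho_{A,\lambda}\subset D(A)$, hence boundedness of $A|_{\ran\Rho_{A,\lambda}}$ by the closed graph theorem), and the Laurent expansion with eigennilpotent $D=(A-\lambda)\Rho_{A,\lambda}$ for (iv) and (v). Your treatment of (i), (ii), (iv) and (v) is sound; in particular the identification $\ker(\lambda-A)=\ker D\cap\ran\Rho_{A,\lambda}$, the use of Proposition \ref{fds} to get $M(\lambda-A)=\ran\Rho_{A,\lambda}$ for a pole, and the kernel-chain argument for the pole-order-one equivalence are all correct, and your Laurent series matches the paper's convention $R_A(\zeta)=(\zeta-A)^{-1}$.

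There is, however, one incomplete deduction in (iii). From the union $\sigma(A)=\sigma(A|_{\ran\Rho_{A,\lambda}})\cup\sigma(A|_{\ker\Rho_{A,\lambda}})$ together with $\lambda\in\rho(A|_{\ker\Rho_{A,\lambda}})$ you may conclude $\lambda\in\sigma(A|_{\ran\Rho_{A,\lambda}})$ and $\sigma(A|_{\ker\Rho_{A,\lambda}})\subseteq\sigma(A)\setminus\{\lambda\}$, but this does \emph{not} force $\sigma(A|_{\ran\Rho_{A,\lambda}})=\{\lambda\}$: a priori another point of $\sigma(A)$ could sit in the spectrum of the $\ran\Rho_{A,\lambda}$-part only, and then it would also be absent from $\sigma(A|_{\ker\Rho_{A,\lambda}})$, ruining both claimed equalities. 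The separation needs the mirror-image argument on the range side as well: the inclusion $\sigma(A|_{\ran\Rho_{A,\lambda}})\subseteq\{\lambda\}$ is obtained by checking that
\[
T(\zeta):=\frac1{2\pi\ii}\oint_\Gamma\frac{R_A(w)}{\zeta-w}\d w
\]
inverts $\zeta-A$ on $\ran\Rho_{A,\lambda}$ for every $\zeta$ outside $\Gamma$, and then shrinking $\Gamma$ toward $\lambda$; this is precisely the second half of the Riesz decomposition argument in \cite[Section III.6.4]{kato}. Since your parenthetical locates the entire difficulty in the regularity of $R_A$ on $\ker\Rho_{A,\lambda}$ at $\lambda$, this half is genuinely missing rather than merely unstated, though the repair is the symmetric contour-integral device you already employ elsewhere.
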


For a finite number of isolated points of the spectrum we have:

\begin{lem}\label{uniqueness}
For $N\in\N_0$, let $A$ have isolated points of the spectrum $\zeta_0,\ldots, \zeta_{N-1}$, which are eigenvalues with $\dim
M(\zeta_k-A)<\infty$ for all $0\le k\le N-1$. Assume there exists a closed subspace $Y\subset X$,
such that
 \begin{enumerate}
\renewcommand{\theenumi}{\roman{enumi}}
\renewcommand{\labelenumi}{(\theenumi)}
    \item\label{1:i} $Y$ is $A$-invariant, and $\sigma(A|_{Y})\cap \{\zeta_0,\ldots,\zeta_{N-1}\}=\emptyset$.
    \item\label{1:ii} $X$ can be decomposed as $X=Y\oplus M(\zeta_0-A)\oplus\ldots\oplus M(\zeta_{N-1}-A)$.
 \end{enumerate}
Then $Y=\ker \Pi_A$, where $\Pi_A:=\Pi_{A,0}+\cdots+\Pi_{A,N-1}$ is the sum of the spectral projections $\Pi_{A,k}$ corresponding
to the $\zeta_k$, and $M(\zeta_k-A)=\ran\Pi_{A,k}$ for all $0\le k\le N-1$.
\end{lem}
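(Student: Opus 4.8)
The plan is to show that the given $A$-invariant splitting of $X$ is forced to coincide with the canonical decomposition induced by the spectral projections, by computing the latter explicitly from the block structure of the resolvent. Write $Z_k:=M(\zeta_k-A)$ and $Z:=Z_0\oplus\cdots\oplus Z_{N-1}$, so that by hypothesis (\ref{1:ii}) we have $X=Y\oplus Z$. The goal then reduces to proving that the bounded projections associated with this splitting are precisely the spectral projections $\Pi_{A,k}$ and their sum $\Pi_A$.

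First I would record the elementary consequences of finite-dimensionality. Since the chain $\ker(\zeta_k-A)^n$ stabilises, we have $Z_k=\ker(\zeta_k-A)^{n_k}$ for some $n_k\in\N$; in particular $Z_k\subset D(A)$, the operator $(\zeta_k-A)|_{Z_k}$ is nilpotent, and hence $A|_{Z_k}$ is bounded with $\sigma(A|_{Z_k})=\{\zeta_k\}$, so that $\sigma(A|_Z)=\{\zeta_0,\dots,\zeta_{N-1}\}$. Because $Z$ is finite-dimensional and $Y$ is closed, the algebraic direct sum $X=Y\oplus Z$ is topological, whence the projection $P$ onto $Z$ along $Y$ and the coordinate projections $Q_k$ onto $Z_k$ are bounded. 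Using that the range and kernel of each of these projections are $A$-invariant (and contained in $D(A)$ on the finite-dimensional side), one checks that $P$ and the $Q_k$ commute with $A$ on $D(A)$, and therefore commute with $R_A(\zeta)$ for every $\zeta\in\rho(A)$. Consequently $R_A(\zeta)$ is block-diagonal with respect to $X=Y\oplus Z_0\oplus\cdots\oplus Z_{N-1}$, its blocks being the resolvents of the restrictions; in particular $\rho(A)=\rho(A|_Y)\cap\bigcap_k\rho(A|_{Z_k})$, which together with hypothesis (\ref{1:i}) gives $\sigma(A|_Y)=\sigma(A)\setminus\{\zeta_0,\dots,\zeta_{N-1}\}$.

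The heart of the argument is then a contour computation. Fix $k$ and let $\Gamma_k$ be a small positively oriented circle enclosing $\zeta_k$ and no other point of $\sigma(A)$, which exists because $\zeta_k$ is isolated. Inserting the block-diagonal resolvent into the definition \eqref{def:spec_proj} of $\Pi_{A,k}$, the $Y$-block and the $Z_j$-blocks with $j\neq k$ contribute nothing, since $R_{A|_Y}(\cdot)$ and $R_{A|_{Z_j}}(\cdot)$ are holomorphic inside $\Gamma_k$; on the $Z_k$-block the integral yields the spectral projection of $A|_{Z_k}$ onto its entire spectrum, i.e.\ the identity $I_{Z_k}$. Hence $\Pi_{A,k}=Q_k$, so that $\ran\Pi_{A,k}=Z_k=M(\zeta_k-A)$. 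Summing over $k$ gives $\Pi_A=\sum_kQ_k=P$, whence $\ker\Pi_A=\ker P=Y$, as claimed.

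I expect the main obstacle to be the second paragraph rather than the contour computation: namely the verification that the abstract $A$-invariant decomposition is topological and that the resulting bounded projections genuinely commute with the (possibly unbounded) operator $A$, and hence with its resolvent. This is exactly what legitimises the block-diagonalisation of $R_A(\zeta)$ and the identity $\rho(A)=\rho(A|_Y)\cap\bigcap_k\rho(A|_{Z_k})$; once it is in place, the remainder follows from Cauchy's theorem and the definition of the spectral projection. As a cross-check, the inclusion $M(\zeta_k-A)\subseteq\ran\Pi_{A,k}$ is already supplied by Proposition \ref{fds}, and the finite-dimensionality of $\ran\Pi_{A,k}$ together with Proposition \ref{spec_prop} (\ref{itemiii})--(\ref{itemiv}) shows that each $\zeta_k$ is a pole of $R_A(\zeta)$ with $M(\zeta_k-A)=\ran\Pi_{A,k}$, in agreement with the contour computation above.
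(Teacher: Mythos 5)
Your proof is correct, and it is more self-contained than the paper's, though both hinge on the same key observation: the resolvent respects the splitting $X=Y\oplus M(\zeta_0-A)\oplus\cdots\oplus M(\zeta_{N-1}-A)$, so that on the $Y$-component the integrand in \eqref{def:spec_proj} is holomorphic at each $\zeta_k$ and the contour integral vanishes there. The paper stops essentially at this point: it deduces $Y\subseteq\ker\Pi_A$ from the analyticity of $\zeta\mapsto R_A(\zeta)|_Y$, takes the opposite inclusion $M(\zeta_k-A)\subseteq\ran\Pi_{A,k}$ directly from Proposition \ref{fds}, and then squeezes both into equalities using hypothesis (\ref{1:ii}) together with $\ker\Pi_A\cap\ran\Pi_A=\{0\}$ (if $x\in\ker\Pi_A$ is written as $x=y+z$ along the given decomposition, then $z=x-y\in\ker\Pi_A\cap\ran\Pi_A=\{0\}$, and likewise componentwise). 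You instead compute every block of the contour integral: Cauchy's theorem kills the $Y$-block and the $Z_j$-blocks with $j\neq k$, while the $Z_k$-block gives $I_{Z_k}$ since $A|_{Z_k}$ is a bounded finite-dimensional operator with spectrum $\{\zeta_k\}$ entirely inside $\Gamma_k$; this identifies $\Pi_{A,k}$ with the coordinate projection $Q_k$ outright and makes Proposition \ref{fds} a cross-check rather than an ingredient. What your longer route buys is precisely the justification the paper compresses into its first sentence: the claim $\sigma(A|_Y)=\sigma(A)\setminus\{\zeta_0,\ldots,\zeta_{N-1}\}$, and the fact that $R_A(\zeta)|_Y=R_{A|_Y}(\zeta)$, require knowing that $R_A(\zeta)$ leaves $Y$ invariant, which in turn rests on exactly the points you verify — the algebraic splitting is topological because $Z$ is finite-dimensional and $Y$ closed, the bounded projections $P,Q_k$ commute with $A$ on $D(A)$ (using $Z\subset D(A)$ and the $A$-invariance of $Y$) and hence with the resolvent, yielding the block-diagonal structure and $\rho(A)=\rho(A|_Y)\cap\bigcap_k\rho(A|_{Z_k})$. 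You correctly identified this as the real obstacle; the paper's proof is shorter but implicitly relies on the same facts, while yours makes them explicit at the cost of a page of bookkeeping.
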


\begin{proof}
According to the assumptions there holds $\sigma(A|_Y)=\sigma(A)\backslash\{\zeta_0,\ldots,\zeta_{N-1}\}$, and therefore the map $\zeta\mapsto R_A(\zeta)|_{Y}$ is analytic in $\rho(A)\cup\{\zeta_0,\ldots,\zeta_{N-1}\}$. Due to the definition (\ref{def:spec_proj}) of spectral projections this implies that $\Pi_{A,k}Y\equiv0$ for every $\Pi_{A,k}$, and therefore $Y\subseteq \ker \Pi_A$. On the other hand we have $M(\zeta_k-A)\subseteq\ran\Pi_{A,k}$ for all $0\le k\le N-1$, according to Proposition \ref{fds}. From (\ref{1:ii}) we conclude that the inclusions have to be equalities, otherwise $\ker\Pi_A\cap\ran\Pi_A\neq\{0\}$, which is impossible.
\end{proof}

\section{Fourier Transform of the Resolvent}\label{app:b}

This section deals with the explicit computation of the Fourier transform of the resolvent $R_{\L+\Theta}(\zeta)$ of the (one-dimensional) perturbed Fokker-Planck operator $\L+\Theta$ in $\E$, where $\Theta$ fulfills the condition {\bf (C)}. We begin by considering the resolvent equation
\[
 (\zeta-\L-\Theta)f=g
\]
on $\R$, where we assume $\Re\zeta>{-k}$ and $f,g\in\E_k$ for some $k\in\N_0$. We apply the Fourier transform, which yields the following differential equation:
\[
 \xi\Big[\hat f'(\xi)+\Big(\xi+\frac{\zeta-\hat\theta(\xi)}{\xi}\Big)\hat f(\xi)\Big]=\hat g(\xi).
\]
By defining $\tilde f:=\hat f/\hat f_0$ and $\tilde g:=\hat g/\hat f_0$ we obtain the equivalent equation
\begin{equation}\label{ode:tilde_f}
 \xi\tilde f'(\xi)+\zeta\tilde f(\xi)=\tilde g(\xi).
\end{equation}
The general solution for $\xi\in\R^\pm$ reads 
\begin{equation}\label{tf}
 \tilde f(\xi)=\int_0^1\tilde g(\xi s)s^{\zeta-1}\d s+C_\pm\xi^{-\zeta}=:I(\xi)+C_\pm\xi^{-\zeta},
\end{equation}
where the $C_\pm\in\C$ are integration constants to be determined.

First we shall show that the integral $I(\xi)$ is an analytic function on $\Omega_{\beta/2}$: If $g\in\E_k$, then $\tilde g$ is analytic in $\Omega_{\beta/2}$ and has a zero at $\xi=0$ of order not less than $k$, see (\ref{f_char_e_k}). Therefore, for any fixed $\zeta\in\C$ with $\Re\zeta>-k$,
\[
 \tilde g(\xi s)s^{\zeta-1}=\frac{\tilde g(\xi s)}{s^k}s^{\zeta+k-1},\quad s \in (0,1],
\]
is locally integrable at $s=0$, and $I(\xi)$ is well defined for all $\xi\in\Omega_{\beta/2}$. To see that it is actually analytic, we define
$I_\epsilon(\xi):=\int_\epsilon^1 G_k(\xi,s)s^{\zeta+k-1}\d s$ for $\epsilon\in[0,1)$, where
\[G_k(\xi, s):=\begin{cases}
                    \displaystyle \frac{\tilde g(\xi s)}{ s^k},\quad &s\in(0,1],\\
		\displaystyle\frac{\tilde g^{(k)}(0)\xi^k}{k!},\quad& s=0,
                   \end{cases}
\]
for $\xi\in\Omega_{\beta/2}$. The function $G_k(\cdot,s)$ is analytic in $\Omega_{\beta/2}$ for all (fixed) $s\in[0,1]$, and  $G_k$ is continuous in $\Omega_{\beta/2}\times [0,1]$. According to \cite[Theorem 4.9.1]{dettman}, the functions $I_\epsilon(\xi)$ are analytic in $\Omega_{\beta/2}$ for all $\epsilon\in(0,1)$. Now we show that $(I_\epsilon)_{\epsilon\in(0,1)}$ converges normally to $I$ in $\Omega_{\beta/2}$ as $\epsilon\to 0$: Let $K\subset\Omega_{\beta/2}$ be compact. Then we have
\begin{align}
\sup_{\substack{\xi\in K\\s\in[0,1]}}|G_k(\xi,s)|&\le\sup_{\substack{\xi\in K_0\\s\in[0,1]}}|G_k(\xi,s)|=\sup_{\substack{\xi\in K_0\backslash\{0\}\\s\in(0,1]}}\Big|\frac{\tilde g(\xi s)}{(\xi s)^k}\xi^k\Big|\nonumber\\
& \le \sup_{\xi\in K_0\backslash\{0\}}\Big|\frac{\tilde g(\xi )}{\xi ^k}\Big|\cdot\sup_{\xi\in K_0}|\xi^k|=:C_K<\infty,\label{ce_ka}
\end{align}
since $\tilde g(\xi)/\xi^k$ is analytic in $\Omega_{\beta/2}$ (the singularity at $\xi=0$ is removable). Thereby, $ K_0$ is an appropriate convex, compact set with $\{0\}\cup K\subseteq K_0\subset \Omega_{\beta/2}$, and $C_K>0$ is a constant. With (\ref{ce_ka}) we obtain the following estimate for $\xi\in K$ and $0<\epsilon\le 1$:
\[|I(\xi)-I_\epsilon(\xi)|=\Big|\int_0^\epsilon G_k(\xi,s)s^{\zeta+k-1}\d s\Big|\le C_K \frac{\epsilon^{\Re\zeta+k}}{\Re\zeta+k}.\]
Since $\Re\zeta+k>0$, this shows the normal convergence of the analytic functions $I_\epsilon$ towards $I$. According to \cite[Theorem 4.2.3]{dettman} this implies that $I(\xi)$ is analytic in $\Omega_{\beta/2}$.

Now it remains to determine the constants $C_\pm$ in (\ref{tf}). If we require $f\in\E_k$, it is necessary that $\tilde f$ is analytic in $\Omega_{\beta/2}$ and has a zero of order not less than $k$ at $\xi=0$. As already shown, $I(\xi)$ is analytic in $\Omega_{\beta/2}$. Furthermore, for $g\in\E_k$ and all (fixed) $s\in[0,1]$, $\xi\mapsto G_k(\xi,s)$ has a zero of order not less than $k$ at $\xi=0$. Therefore $I(\xi)=\int_0^1G_k(\xi,s)s^{\zeta+k-1}\d s$ has the same property, so $\F^{-1}I\in\E_k$. Thus, it is sufficient to consider the term $C_\pm\xi^{-\zeta}$. If $\zeta\notin-\N_0$, then $\xi^{-\zeta}$ is not analytic in $\Omega_{\beta/2}$ anyway, hence $C_+=C_-=0$. If $\zeta\in\{-k+1,\ldots,-1\}$ for $g\in\E_k$, $\xi^{-\zeta}$ is analytic, and we obtain $C_+=C_-$ because we require continuity of the solution. But the order of the zero of $\xi^{-\zeta}$ is at most $k-1$. Since we need a zero of at least order $k$, we again obtain $C_+=C_-=0$. The conclusion of the above analysis is summarized in the 
following proposition:

\begin{prop}\label{f_trafo_reso}
Let $g\in\E_k$ for some $k\in\N_0$, and $\Re\zeta>{-k}$. Then the unique $f\in\E_k$ with $f=R_{\L+\Theta}(\zeta)g$ satisfies
\[
 \hat f(\xi)=\hat f_0(\xi)\int_0^1\frac{\hat g(s\xi)}{\hat f_0(s\xi)}s^{\zeta-1}\d s,\quad\xi\in\Omega_{\beta/2}.
\]
\end{prop}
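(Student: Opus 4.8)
The plan is to reduce the resolvent equation to a scalar first-order ODE by means of the Fourier transform, solve it explicitly, and then fix the integration constants using the analyticity constraints that characterize $\E$ and $\E_k$ (Lemma \ref{analyticity} and Proposition \ref{char:e_k}).

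First I would apply the Fourier transform to $(\zeta-\L-\Theta)f=g$. Since $\L$ transforms into a first-order differential operator in $\xi$ (involving $\xi\pd_\xi$ and multiplication by $\xi^2$) and $\Theta$ becomes multiplication by $\hat\theta$, this produces a first-order linear ODE for $\hat f$. The decisive simplification is the substitution $\tilde f:=\hat f/\hat f_0$ and $\tilde g:=\hat g/\hat f_0$, with $\hat f_0$ the stationary profile from \eqref{rec_f_k}; because $\hat f_0$ solves the homogeneous equation, this collapses the ODE to the clean form
\[
\xi\tilde f'(\xi)+\zeta\tilde f(\xi)=\tilde g(\xi).
\]
Variation of constants then gives the general solution on each half-line $\R^\pm$,
\[
\tilde f(\xi)=\int_0^1\tilde g(\xi s)s^{\zeta-1}\d s+C_\pm\xi^{-\zeta}=:I(\xi)+C_\pm\xi^{-\zeta}.
\]

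The key technical step is to show that the particular integral $I(\xi)$ extends to an analytic function on the whole strip $\Omega_{\beta/2}$. This is precisely where the hypotheses $g\in\E_k$ and $\Re\zeta>-k$ are used: by \eqref{f_char_e_k}, $\tilde g$ has a zero of order at least $k$ at the origin, so the integrand $\tilde g(\xi s)s^{\zeta-1}$ is locally integrable at $s=0$, and I would establish analyticity by a normal-convergence argument for the truncated integrals $I_\epsilon(\xi)=\int_\epsilon^1(\cdots)\d s$ together with the standard limit theorem for locally uniform limits of analytic functions.

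The main obstacle, and the part demanding the most care, is pinning down the constants $C_\pm$. Membership $f\in\E_k$ forces $\tilde f$ to be analytic across the imaginary axis with a zero of order at least $k$ at $\xi=0$; since $I$ already enjoys both properties, everything reduces to the branch term $C_\pm\xi^{-\zeta}$. I would argue by cases: if $\zeta\notin-\N_0$ the two half-plane branches of $\xi^{-\zeta}$ fail to match across the imaginary axis, so analyticity forces $C_\pm=0$; if $\zeta\in\{-k+1,\ldots,-1\}$ then continuity gives $C_+=C_-$, but $\xi^{-\zeta}$ vanishes only to order at most $k-1<k$ at the origin, which again forces $C_\pm=0$. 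Hence $\tilde f=I$, and undoing the substitution $\tilde f=\hat f/\hat f_0$ yields the stated formula. Uniqueness of $f$ is automatic, since $\Re\zeta>-k$ places $\zeta$ in the resolvent set of $(\L+\Theta)|_{\E_k}$ by Proposition \ref{pert:spec}.
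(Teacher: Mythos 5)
Your proposal is correct and follows essentially the same route as the paper's own proof in Appendix \ref{app:b}: Fourier transform of the resolvent equation, the substitution $\tilde f=\hat f/\hat f_0$, $\tilde g=\hat g/\hat f_0$ collapsing it to $\xi\tilde f'+\zeta\tilde f=\tilde g$, analyticity of the particular integral $I(\xi)$ via normal convergence of the truncations $I_\epsilon$, and the same case analysis ($\zeta\notin-\N_0$ versus $\zeta\in\{-k+1,\ldots,-1\}$, using the branch mismatch and the insufficient order of the zero of $\xi^{-\zeta}$) to force $C_\pm=0$. Your closing remark on uniqueness, via $\sigma((\L+\Theta)|_{\E_k})=\{-k,-k-1,\ldots\}$ so that $\Re\zeta>-k$ lies in the resolvent set, is also exactly the justification implicit in the paper's statement.
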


\section{Deferred Proofs and Lemmata}\label{app:proof}

\begin{proof}[\underline{Proof of Lemma \ref{analyticity}}]
For $f\in\E$ there holds $f(x)\e^{bx}\in L^2(\R)$ for all $b\in[-\frac\beta 2,\frac\beta2]$. Therefore $\hat f$ is analytic in $\Omega_{\beta/2}$ according to \cite[Theorem IX.13]{resi2}. Due to part (b) of the proof of that theorem (see page~132 in \cite{resi2}), Result (\ref{analyt:ii}) follows. We proceed to the proof of (\ref{analyt:i}). If $f\in \E$ and $b\in [-\frac\beta 2,\frac\beta2]$, we clearly have
\[\|f(x)\e^{bx}\|_{L^2(\R)} \le \|f(x)\e^{\frac\beta 2|x|}\|_{L^2(\R)} \le \sqrt 2\|f\|_\omega.\]
On the left hand side we insert the identity from \eqref{analyt:ii} and use Plancherel's identity, which shows \eqref{seid}. Conversely, let us now assume that $\hat f$ is analytic in $\Omega_{\beta/2}$ and that \eqref{seid} holds. We shall now show that $f\in\E$.  Due to these assumptions we conclude from \cite[Theorem IX.13]{resi2} that $f(x)\e^{bx}\in\ L^2(\R)$ for all $b\in (-\frac\beta 2,\frac\beta2)$. For these values of $b$ we may therefore use the representation from (\ref{analyt:ii}). We insert it in \eqref{seid} and after applying Plancherel's identity we get
\begin{equation}\label{sup_f_hat_e}
 \sup_{\substack{|b|<\beta/2\\b\in\R}}\|f(x)\e^{b|x|}\|_{L^2(\R)}<\infty.
\end{equation}
But this is only possible if $f\in\E$, otherwise the supremum in \eqref{sup_f_hat_e} would not be finite.

Finally we show (\ref{analyt:iii}). For $f\in\E$ there holds $f(x)\e^{\pm\frac \beta 2x}\in L^2(\R)$, and therefore $\xi\mapsto\hat f(\xi\pm\ii\beta/2)$, as defined in (\ref{hatf}), is again an element of $L^2(\R)$. With this definition we now show $b\mapsto\hat f(\cdot+\ii b)\in C([-\beta/ 2,\beta /2];L^2(\R))$. Due to Plancherel's identity we may show equivalently that  $b\mapsto f(x)\e^{bx}$ is continuous in $L^2(\R)$. To this end we fix $b,b_0\in[-\beta/2,\beta/2]$, and we split the integral for any $R>0$:
\begin{align}
 \|f(x)\e^{bx}-f(x)\e^{b_0x}\|_{L^2(\R)}^2 &= \int\displaylimits_{\R\backslash[-R,R]}\!\!|f(x)|^2(\e^{b_0x}-\e^{bx})^2\d x\label{1_2}\\
&\phantom{=}+\int_{-R}^R|f(x)|^2(\e^{b_0x}-\e^{bx})^2\d x\nonumber
\end{align}
Now, for any $\epsilon>0$ we can find some $R=R(\epsilon)>0$ so that $\int_{\R\backslash[-R,R]}|f(x)|^2\e^{\beta|x|}\d x\allowbreak<\epsilon$. So we get for the first integral (independent of $b,b_0$)
\begin{align*}
 \int\displaylimits_{\R\backslash[-R,R]}\!\!|f(x)|^2(\e^{b_0x}-\e^{bx})^2\d x &\le \int\displaylimits_{\R\backslash[-R,R]}\!\! |f(x)|^2\e^{2|x|\max\{|b|,|b_0|\}}\d x\\
      &\le \int\displaylimits_{\R\backslash[-R,R]}\!\! |f(x)|^2\e^{\beta|x|}\d x<\epsilon.
\end{align*}
The second integral in \eqref{1_2} converges to zero, for any fixed $R>0$, as $b\to b_0$. Altogether
\[\lim_{b\to b_0} \|f(x)\e^{bx}-f(x)\e^{b_0x}\|_{L^2(\R)}^2<\epsilon.\]
\end{proof}

\begin{proof}[\underline{Proof of Lemma \ref{abschluss_LL}}]
According to Corollary \ref{cor_dissip} the operator $(L-1-\beta^2/2)|_{C_0^\infty(\R)}$ is dissipative, so it is closable (cf.~\cite[Theorem 1.4.5 (c)]{pazy}), and so is $L|_{C_0^\infty(\R)}$. We define $\L:=\cl_\E L|_{C_0^\infty(\R)}$, and the domain $D(\L)$ consists of all $f\in\E$ such that there exists some $h\in\E$ such that (for some $(f_n)_{n\in\N}\subset C_0^\infty(\R)$) 
\[
 \begin{cases}
   \lim_{n\to\infty}\|f_n-f\|_\omega=0,\\
   \lim_{n\to\infty}\|Lf_n-h\|_\omega=0.
 \end{cases}
\]
For such $f$ we have $\L f:= h=\mathfrak L f$. Therefore $D(\L)\subseteq\{f\in \E:\mathfrak Lf\in\E\}$. Since $\|\cdot\|_E$ is stronger than $\|\cdot\|_\omega$ we also have $D(L)\subset D(\L)$.

Finally we need to show that the above inclusion for the domain indeed is an equality. We take $\zeta\in\C$ with $\Re\zeta\ge 1+\beta^2/2$. From Theorem \ref{prop:fp_in_H} and the dissipativity of $\zeta-\L$ we know that $(\zeta-\L)^{-1}|_E=(\zeta-L)^{-1}$ is a well-defined operator on $E$. And from \eqref{comp_est} we conclude that this is even a  bounded operator in $\E$ with dense domain $E$. Therefore, also its closure $\cl_\E( (\zeta-\L)^{-1}|_E)=(\zeta-\L)^{-1}$ is bounded in $\E$, and therefore $\zeta\in\rho(\L)$. Now assume that there is some $f\in\E\backslash D(\L)$ such that $\mathfrak Lf\in\E$. Because $\zeta\in\rho(\L)$, $\zeta-\L:D(\L)\to\E$ is a bijection, and therefore there exists a unique $\mathfrak f\in D(\L)$ with $(\zeta-\L)\mathfrak f=(\zeta-\mathfrak L)f$, which is equivalent to the existence of $\mathfrak f^\star\in\E$ with $\mathfrak f^\star\neq 0$ such that $(\zeta-\mathfrak L)\mathfrak f^\star=0$. But according to \eqref{comp_est} this is impossible.
\end{proof}

\begin{lem}\label{lem:proj}
Consider two Hilbert spaces $X\inj\X$, and a projection $\Rho_\X\in\BB(\X)$, such that $\Rho_X:=\Rho_\X|_X\in\BB(X)$. Then $\ran\Rho_\X=\cl_\X\ran\Rho_X$ and $\ker\Rho_\X=\cl_\X\ker\Rho_X$.
\end{lem}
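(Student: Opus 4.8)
The plan is to establish each of the two equalities by a pair of opposite inclusions, exploiting that ranges and kernels of bounded projections are closed, together with the density of $X$ in $\X$. The starting observation is that $\Rho_X$ is itself a bounded projection on $X$: for $x\in X$ the hypothesis $\Rho_\X x\in X$ gives $\Rho_X^2 x=\Rho_\X(\Rho_\X x)=\Rho_\X x=\Rho_X x$. Throughout I will use that $\Rho_\X|_X=\Rho_X$ sends $X$ into $X$, and that $\Rho_\X y=y$ holds exactly on $\ran\Rho_\X$ while $\Rho_\X y=0$ holds exactly on $\ker\Rho_\X$.

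For the range identity, the inclusion $\cl_\X\ran\Rho_X\subseteq\ran\Rho_\X$ is immediate: one has $\ran\Rho_X\subseteq\ran\Rho_\X$ since $\Rho_X x=\Rho_\X x$ for $x\in X$, and $\ran\Rho_\X=\ker(\id-\Rho_\X)$ is closed in $\X$ as the kernel of a bounded operator, so it contains the $\X$-closure of $\ran\Rho_X$. For the reverse inclusion I fix $y\in\ran\Rho_\X$, so that $\Rho_\X y=y$, and use the density of $X$ in $\X$ to choose $x_n\in X$ with $x_n\to y$ in $\X$. Continuity of $\Rho_\X$ then yields $\Rho_\X x_n\to\Rho_\X y=y$ in $\X$; but $\Rho_\X x_n=\Rho_X x_n\in\ran\Rho_X$, whence $y\in\cl_\X\ran\Rho_X$.

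The kernel identity is proved identically with $\id-\Rho_\X$ in place of $\Rho_\X$. Here $\ker\Rho_X\subseteq\ker\Rho_\X$ is clear, and $\ker\Rho_\X$ is closed, giving $\cl_\X\ker\Rho_X\subseteq\ker\Rho_\X$. Conversely, for $y\in\ker\Rho_\X$ and $x_n\in X$ with $x_n\to y$ in $\X$, the elements $(\id-\Rho_\X)x_n=x_n-\Rho_X x_n$ lie in $X$ and satisfy $\Rho_X[(\id-\Rho_\X)x_n]=\Rho_\X(\id-\Rho_\X)x_n=0$, so they belong to $\ker\Rho_X$; since $(\id-\Rho_\X)x_n\to(\id-\Rho_\X)y=y$ in $\X$, we conclude $y\in\cl_\X\ker\Rho_X$.

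There is no serious obstacle here; the argument is entirely elementary once the right objects are identified. The only point needing care is to confirm that the approximating sequences remain inside the correct subspaces of $X$ — that $\Rho_\X x_n$ and $(\id-\Rho_\X)x_n$ stay in $X$ and are respectively fixed and annihilated by $\Rho_X$ — which is exactly what the hypothesis $\Rho_X=\Rho_\X|_X\in\BB(X)$ (in particular $\Rho_\X(X)\subseteq X$) provides. One should also keep in mind that every approximation is carried out in the $\X$-norm, invoking density and the $\X$-continuity of $\Rho_\X$ rather than any estimate in the $X$-norm.
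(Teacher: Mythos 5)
Your proof is correct and takes essentially the same route as the paper's: both obtain $\cl_\X\ran\Rho_X\subseteq\ran\Rho_\X$ from the closedness of the range of the bounded projection $\Rho_\X$, derive the reverse inclusion from density of $X$ in $\X$ and the $\X$-continuity of $\Rho_\X$ (the paper compresses this into the statement $\Rho_\X=\cl_\X\Rho_X$), and treat the kernel identity via the complementary projections $\id-\Rho_\X$ and $\id-\Rho_X$. You merely write out explicitly the approximation argument and the verification that $(\id-\Rho_\X)x_n$ stays in $\ker\Rho_X$, which the paper leaves implicit.
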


\begin{proof}
We give here the proof of the equality of the ranges, the other identity can be shown analogously, using the complementary projections instead. On the one hand we have $\ran\Rho_X\subseteq\ran\Rho_\X$, and so $\cl_\X\ran\Rho_X\subseteq \ran \Rho_\X$, since $\ran\Rho_\X$ is closed in $\X$ due to the boundedness of $\Rho_\X$. On the other hand $\Rho_\X=\cl_\X\Rho_X$, which implies $\ran\Rho_\X\subseteq\cl_\X\ran\Rho_X$. 
\end{proof}

\begin{lem}\label{lem:funct}
Let $X\inj\X$ be Hilbert spaces, and $\psi_0,\ldots,\psi_{k-1}\in\BB(\X,\C),\,k\in\N$, be linearly independent functionals. Then $\tilde \psi_j:=\psi_j|_X\in\BB(X,\C)$ for all $0\le j\le k-1$, and \[\bigcap_{j=0}^{k-1}\ker\psi_j=\cl_\X\bigcap_{j=0}^{k-1}\ker\tilde\psi_j.\]
\end{lem}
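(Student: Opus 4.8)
The plan is to establish the claimed set equality by proving two inclusions, after first disposing of the boundedness assertion. That $\tilde\psi_j\in\BB(X,\C)$ is immediate: the embedding $X\inj\X$ is continuous, so restricting an $\X$-bounded functional to $X$ yields an $X$-bounded functional. For the inclusion $\cl_\X\bigcap_{j}\ker\tilde\psi_j\subseteq\bigcap_{j}\ker\psi_j$, I would observe that $\bigcap_{j}\ker\psi_j$ is $\X$-closed, being a finite intersection of kernels of $\X$-continuous functionals, and that every $x\in\bigcap_j\ker\tilde\psi_j\subset X$ satisfies $\psi_j(x)=\tilde\psi_j(x)=0$; taking $\X$-closures then preserves the inclusion $\bigcap_j\ker\tilde\psi_j\subseteq\bigcap_j\ker\psi_j$.

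The content of the lemma lies in the reverse inclusion. First I would check that the restricted functionals $\tilde\psi_0,\dots,\tilde\psi_{k-1}$ remain linearly independent on $X$: any relation $\sum_j c_j\tilde\psi_j=0$ on $X$ extends, by density of $X$ in $\X$ and continuity, to $\sum_j c_j\psi_j=0$ on $\X$, forcing all $c_j=0$ by the hypothesized independence of the $\psi_j$. With independence in hand, the standard dual-basis fact for finitely many independent functionals provides elements $e_0,\dots,e_{k-1}\in X$ with $\tilde\psi_i(e_j)=\delta_{ij}$ (Kronecker delta).

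Given now $x\in\bigcap_j\ker\psi_j$, I would use density of $X$ in $\X$ to choose $x_n\in X$ with $x_n\to x$ in $\X$, and correct them by
\[
  y_n:=x_n-\sum_{j=0}^{k-1}\tilde\psi_j(x_n)\,e_j\in X.
\]
Biorthogonality gives $\tilde\psi_i(y_n)=\tilde\psi_i(x_n)-\sum_j\tilde\psi_j(x_n)\delta_{ij}=0$ for each $i$, so $y_n\in\bigcap_j\ker\tilde\psi_j$. Since each $\psi_j$ is $\X$-continuous and $x\in\ker\psi_j$, we have $\tilde\psi_j(x_n)=\psi_j(x_n)\to\psi_j(x)=0$, so the finite correction term tends to $0$ in $\X$ and hence $y_n\to x$ in $\X$. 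This shows $x\in\cl_\X\bigcap_j\ker\tilde\psi_j$, completing the reverse inclusion.

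The only genuinely delicate point is this reverse inclusion, and within it the requirement that the dual basis $e_0,\dots,e_{k-1}$ live in $X$ rather than merely in $\X$; this is precisely what the density-based linear independence of the $\tilde\psi_j$ on $X$ secures, and without it the correction $y_n$ would not be guaranteed to lie in $X$. Everything else is a routine approximation-and-continuity argument.
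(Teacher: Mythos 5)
Your proof is correct, but it takes a genuinely different route from the paper's. The paper exploits the Hilbert structure: it represents each restricted functional via Riesz as $\tilde\psi_j=\la\cdot,x_j\ra_X$ with $x_j\in X$, orthonormalizes the $x_j$ by Gram--Schmidt, assembles the orthogonal projection $\Rho_X=\sum_\ell \hat x_\ell\la\cdot,\hat x_\ell\ra_X$ onto $\spn\{x_0,\ldots,x_{k-1}\}$, extends it to a bounded projection $\Rho_\X$ on $\X$ by replacing $\tilde\psi_j$ with $\psi_j$ in the expansion, invokes Lemma \ref{lem:proj} to get $\ker\Rho_\X=\cl_\X\ker\Rho_X$, and finally identifies the two kernels with $\bigcap_j\ker\psi_j$ and $\bigcap_j\ker\tilde\psi_j$ via the invertibility of $\Lambda\Lambda^*$. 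You instead prove the two inclusions directly: the easy one from the $\X$-closedness of $\bigcap_j\ker\psi_j$, and the substantive one by a biorthogonal system $e_j\in X$ with $\tilde\psi_i(e_j)=\delta_{ij}$ together with a finite-rank correction of an approximating sequence. Your correction map $x_n\mapsto x_n-\sum_j\tilde\psi_j(x_n)e_j$ is the complementary (oblique) projection $\id-\sum_j e_j\,\psi_j(\cdot)$ in disguise, so the two arguments are cousins; but yours is more elementary and strictly more general, since it never uses the inner product and thus works verbatim for densely and continuously embedded Banach spaces, dispensing with Riesz representation, Gram--Schmidt, and the auxiliary projection lemma. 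You also make explicit a point the paper glides over: its assertion that the Riesz vectors $x_j$ are linearly independent ``because the corresponding functionals are'' tacitly requires independence of the \emph{restrictions} $\tilde\psi_j$, which is not hypothesized but follows from independence of the $\psi_j$ plus density of $X$ in $\X$ --- exactly the density argument you spell out. What the paper's heavier route buys in return is reuse: the bounded-projection framework and Lemma \ref{lem:proj} are needed anyway for the spectral projections in Proposition \ref{spec_pr}, so constructing $\Rho_\X$ explicitly serves double duty there.
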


\begin{proof}
The boundedness of the $\tilde \psi_j$ is an immediate consequence of $X\inj\X$. In order to show the second statement, we notice that according to the Riesz representation theorem there exists a unique $x_j\in X$ such that $\tilde\psi_j(\cdot)=\la\cdot,x_j\ra_X$ for every $0\le j\le k-1$, where $\la\cdot,\cdot\ra_X$ denotes the inner product in $X$. The set $\{x_0,\ldots,x_{k-1}\}$ is linearly independent, because the corresponding functionals are.  We now apply the Gram-Schmidt process to $\{x_0,\ldots,x_{k-1}\}$ to obtain the orthonormal family $\{\hat x_0,\ldots,\hat x_{k-1}\}$ with same linear hull. As a consequence, there exists a regular matrix $\Lambda:=(\lambda_\ell^j)_{\ell,j}\in\C^{k\times k}$ such that
$\hat x_\ell=\sum_{j=0}^{k-1}\lambda_\ell^jx_j$.
With this we get
\[
\hat x_\ell\la\cdot,\hat x_\ell\ra_X=\sum_{i,j=0}^{k-1}\lambda_\ell^i\bar\lambda_\ell^j x_i\la \cdot,x_j\ra_X=\sum_{i,j=0}^{k-1}\lambda_\ell^i\bar\lambda_\ell^j x_i\tilde\psi_j(\cdot),\quad 0\le \ell\le k-1.
\]
We may now define the orthogonal projection
\begin{equation}\label{wasweis}
 \Rho_X:=\sum_{\ell=0}^{k-1}\hat x_\ell\la\cdot,\hat x_\ell\ra_X=\sum_{i,j,\ell=0}^{k-1}\lambda_\ell^i\bar\lambda_\ell^j x_i\tilde\psi_j(\cdot).
\end{equation}
It can naturally be extended to a projection $\Rho_\X$ in $\X$ by replacing the $\tilde \psi_j$ by $\psi_j$. Since $\psi_j\in\BB(\X,\C)$ for all $0\le j\le k-1$, there follows $\Rho_\X\in\BB(\X)$ from (\ref{wasweis}). Now we apply Lemma \ref{lem:proj} to $\Rho_X\subset\Rho_\X$ to obtain $\ker\Rho_\X=\cl_\X\ker\Rho_X$.

Now it remains to characterize the kernels of the projections. Due to (\ref{wasweis}) we have $\Rho_X f=0$ in $X$ iff 
\begin{equation}\label{sys_eq:psi}
 \sum_{j=0}^{k-1}\tilde\psi_j(f)\sum_{\ell=0}^{k-1}\lambda_\ell^i\bar\lambda_\ell^j=0,\quad 0\le i\le k-1,
\end{equation}
since the vectors $x_i$ are linearly independent. We note that the sums $\sum_{\ell=0}^{k-1}\lambda_\ell^i\bar\lambda_\ell^j$ for $0\le i,j\le k-1$ are the elements of the matrix $\Lambda_2:=\Lambda\Lambda^*,$ where $\Lambda^*$ is the Hermitian conjugate of $\Lambda$. Since $\Lambda_2$ is regular, it follows that (\ref{sys_eq:psi}) holds iff $\tilde\psi_j(f)=0$ for all $0\le j\le k-1.$ The proof of $\Rho_\X f=0$ iff $\psi_j(f)=0$ for all $0\le j\le k-1$ is analogous.
\end{proof}

\begin{proof}[\underline{Proof of Lemma \ref{pr_pc:d}}]
For this we use the norm $\nn\cdot\nn_\omega$. We compute
\begin{align*}
\nn\nabla f\nn_\omega^2 &=\sum_{j=1}^d\sum_{\ell=1}^d\Big( \textstyle \big\|\big(\xi_j+\ii\frac\beta 2 \delta_{j\ell}\big)\hat f\big(\bfxi+\ii\frac\beta2{\mathbf e}_\ell\big)\big\|^2_{L^2(\R^d_{\bfxi})}\\
&\qquad\qquad\qquad
\textstyle+\big\|\big(\xi_j-\ii\frac\beta 2 \delta_{j\ell}\big)\hat f\big(\bfxi-\ii\frac\beta2{\mathbf e}_\ell\big)\big\|^2_{L^2(\R^d_{\bfxi})}\Big)\\
&\ge \sum_{\ell=1}^d\Big( \textstyle \big\|\big(\xi_\ell+\ii\frac\beta 2\big)\hat f\big(\bfxi+\ii\frac\beta2{\mathbf e}_\ell\big)\big\|^2_{L^2(\R^d_{\bfxi})}+\big\|\big(\xi_\ell-\ii\frac\beta 2 \big)\hat f\big(\bfxi-\ii\frac\beta2{\mathbf e}_\ell\big)\big\|^2_{L^2(\R^d_{\bfxi})}\Big)\\
&\ge\big( {\textstyle{\frac\beta 2}}\big)^2\sum_{\ell=1}^d\Big( \textstyle \big\|\hat f\big(\bfxi+\ii\frac\beta2{\mathbf e}_\ell\big)\big\|^2_{L^2(\R^d_{\bfxi})}+\big\|\hat f\big(\bfxi-\ii\frac\beta2{\mathbf e}_\ell\big)\big\|^2_{L^2(\R^d_{\bfxi})}\Big)= \big( {\textstyle{\frac\beta 2}}\big)^2 \nn f\nn_\omega^2.
\end{align*}
This proves the Poincar\'e inequality with the constant $C_p=\frac \beta 2$.
\end{proof}

\bigskip
\footnotesize{{\bf Acknowledgement.} The authors were supported by the FWF (project I 395-N16 and the doctoral school ``Dissipation and dispersion in non-linear partial differential equations'') and the \"OAD-project ``Long-time asymptotics for evolution equations in chemistry and biology''.
}

\end{document}